\title{Tilings of the Sphere by Congruent Pentagons I: Edge Combinations $a^2b^2c$ and $a^3bc$}
\author{Erxiao Wang\thanks{Research was supported by ZJNU Shuang-Long Distinguished Professorship Fund No. YS304319159.}, 
Zhejiang Normal University \\
Min Yan\thanks{Research was supported by Hong Kong RGC General Research Fund 16303515 and 16305920.}, 
Hong Kong University of Science and Technology}
\newcommand{\dash}{\hspace{0.1em}\dashrule{0.7}{2.4 1 2.4 1 2.4}\hspace{0.1em}} 
\newcommand{\thin}{\hspace{0.1em}\rule{0.7pt}{0.8em}\hspace{0.1em}}
\newcommand{\thick}{\hspace{0.1em}\rule{1.5pt}{0.8em}\hspace{0.1em}}
\newcommand{\pentagon}{\tikz \foreach \a in {0,...,4} \draw[rotate=72*\a] (18:0.17) -- (90:0.17);}
\newtheorem{theorem}{Theorem}
\newtheorem{lemma}[theorem]{Lemma}
\newtheorem*{theorem*}{Theorem}
\theoremstyle{definition}
\newtheorem*{definition*}{Definition}
\newtheorem*{case*}{Case}
\newtheorem*{subcase*}{Subcase}
\theoremstyle{remark}
\numberwithin{equation}{section}
\begin{document}

\maketitle

\begin{abstract}
We develop the basic tools for classifying edge-to-edge tilings of the sphere by congruent pentagons. Then we prove that, for the edge combination $a^2b^2c$, such tilings are three two-parameter families of pentagonal subdivisions of the Platonic solids, with $12$, $24$ and $60$ tiles. We also prove that, for the edge combination $a^3bc$, such tilings are two unique double pentagonal subdivisions of the Platonic solids, with $48$ and $120$ tiles.

{\it Keywords}: 
Spherical tiling, Pentagon, Classification.
\end{abstract}

\section{Introduction}

Mathematicians have studied tilings for more than 100 years. A lot is known about tilings of the plane and the Euclidean space \cite{rao,zong}. However, results about tilings of the sphere are relatively rare. A major achievement in this regard is the complete classification of edge-to-edge tilings of the sphere by congruent triangles. The classification was started by Sommerville \cite{so} in 1923 and completed by Ueno and Agaoka \cite{ua} in 2002. For tilings of the sphere by congruent pentagons, we know the classification for the minimal case of $12$ tiles \cite{ay1,gsy}.

Spherical tilings are relatively easier to study than planar tilings, because the former involve only finitely many tiles. The classifications in \cite{gsy,ua} not only give the complete list of tiles, but also the ways the tiles are fit together. It is not surprising that such kind of classifications for the planer tilings are only possible under various symmetry conditions, because the quotients of the plane by the symmetries often become compact.

Like the earlier works, we restrict ourselves to edge-to-edge tilings of the sphere by congruent polygons, such that all vertices have degree $\ge 3$. These are mild and natural assumptions that simplify the discussion. The polygon in such a tiling must be triangle, quadrilateral, or pentagon (see \cite{ua2}, for example). We believe that pentagonal tilings should be relatively easier to study than quadrilateral tilings because $5$ is an ``extreme'' among $3$, $4$, $5$. Indeed, many properties on pentagonal tilings in Section \ref{basic_facts} have no or less restrictive counterparts for quadrilateral tilings. Moreover, a preliminary exploration on quadrilateral tilings \cite{ua2} also showed the difficulty of the problem.

The lengths of five edges of the pentagon in our tiling may have five possible combinations (Lemma \ref{edge_combo}): $a^2b^2c,a^3bc,a^3b^2,a^4b,a^5$. Here $a^2b^2c$ means the five edge lengths are $a,a,b,b,c$, with $a,b,c$ distinct. In this paper, we classify for the first two edge combinations. The pentagons are given by Figure \ref{pentagon}, where $a,b,c$ are the normal, thick, and dashed lines. 

\begin{figure}[htp]
\centering
\begin{tikzpicture}[>=latex,scale=1]


\draw
	(234:1) -- (162:1) -- (90:1);

\draw[line width=1.5]
	(-54:1) -- (18:1) -- (90:1);

\draw[densely dashed]
	(234:1) -- (-54:1);

\node at (90:0.75) {$\alpha$};
\node at (162:0.75) {$\beta$};
\node at (15:0.75) {$\gamma$};
\node at (234:0.75) {$\delta$};
\node at (-54:0.75) {$\epsilon$};


\begin{scope}[xshift=3cm]

\draw
	(162:1) -- (234:1) -- (-54:1) -- (18:1);

\draw[line width=1.5]
	(162:1) -- (90:1);

\draw[densely dashed]
	(18:1) -- (90:1);

\node at (90:0.75) {$\alpha$};
\node at (162:0.75) {$\beta$};
\node at (15:0.75) {$\gamma$};
\node at (234:0.75) {$\delta$};
\node at (-54:0.75) {$\epsilon$};

\end{scope}

\begin{scope}[xshift=5cm]

\draw
	(0,0.4) -- node[above=-2] {\small $a$} ++(1,0);

\draw[line width=1.5]
	(0,-0.1) -- node[above=-2] {\small $b$} ++(1,0);

\draw[dashed]
	(0,-0.6) -- node[above=-2] {\small $c$} ++(1,0);

\end{scope}

\end{tikzpicture}
\caption{Pentagons with the edge combinations $a^2b^2c$ and $a^3bc$.}
\label{pentagon}
\end{figure}

\begin{theorem*}
Edge-to-edge tilings of the sphere by congruent pentagons with the edge combination $a^2b^2c$ ($a,b,c$ distinct) are the following:
\begin{enumerate}
\item Pentagonal subdivision of the tetrahedron, with $12$ tiles.
\item Pentagonal subdivision of the octahedron (or cube), with $24$ tiles.
\item Pentagonal subdivision of the icosahedron (or dodecahedron), with $60$ tiles.
\end{enumerate}
\end{theorem*}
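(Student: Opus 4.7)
The plan is first to enumerate all cyclic arrangements of the labels $a,a,b,b,c$ around a single pentagon, then to extract from the pentagon angle sum, Euler's formula, and the edge-to-edge matching constraints a short list of possible anglewise vertex combinations, and finally to propagate around the sphere and identify each consistent combinatorial tiling with the pentagonal subdivision of the relevant platonic solid.

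For the first step, since $c$ is unique and $(a,b)$ can be swapped, the cyclic arrangements up to dihedral symmetry of the pentagon fall into three classes: (I) $c,a,a,b,b$; (II) $c,a,b,a,b$; (III) $c,a,b,b,a$. The label of each edge determines which two pentagon angles it separates, so it sharply restricts which angles can meet and be adjacent at a vertex. For the second step, since each pentagon has exactly one $c$-edge, there are $f/2$ edges of type $c$, forcing $f$ to be even. The spherical pentagon angle sum is $\sum_{i=1}^{5}\alpha_i=3\pi+4\pi/f$, and Euler's formula combined with the assumption $\deg v\ge 3$ yields $\sum_v(\deg v-3)=f/2-6$, leaving only a small budget for vertices of degree $\ge 4$. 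I would then list, separately for each of the three edge arrangements, the degree-$3$ vertex types compatible with the labels and the angle identity $\alpha_i+\alpha_j+\alpha_k=2\pi$, obtaining only a handful of candidate vertex combinations.

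The bulk of the work lies in the third step: starting from a single tile and an admissible vertex at one of its corners, the adjacent angle deduction forces the labels and angles at neighboring tiles; iterating this propagation must either close up consistently into a genuine tiling or yield a contradiction. I expect arrangement (I) to yield the tetrahedral ($f=12$) and octahedral ($f=24$) subdivisions, arrangement (II) to yield the icosahedral ($f=60$) subdivision, and arrangement (III) either to collapse into a relabeling of one of these or to be eliminated outright. For each surviving combinatorial pattern I would then confirm that the resulting angle system admits a geometric realization on the sphere, which reduces to a consistency check on a one-parameter family of spherical pentagonal shapes and agrees with the pentagonal subdivisions described in the introduction.

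The principal obstacle is managing the edge-matching and angle-matching constraints simultaneously during the propagation: the same angle value can legally occur with different edge-pair signatures, and higher-degree vertices must be systematically ruled out rather than merely ignored. Pinning down the very few vertex patterns that admit a consistent global extension, while closing off all the others, is where the heavy case-work of the proof will go.
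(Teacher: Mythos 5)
Your overall plan---enumerate the cyclic arrangements of $a,a,b,b,c$, extract counting identities from Euler's formula and the pentagon angle sum, list admissible vertices, and propagate---is the same skeleton the paper uses, and your three arrangements (I), (II), (III) match the paper's list. However, your predicted correspondence between arrangements and tilings is wrong in a way that would misdirect the whole propagation step. In the paper the alternating arrangement $a,b,a,b,c$ (your (II)) is eliminated entirely in Section \ref{2a2bcCase1}, and \emph{all three} tilings, $f=12,24,60$, arise from the arrangement with adjacent equal edges $a,a,b,b,c$ (your (I)); the pentagon in Figure \ref{regular_subdivision} visibly has its two $a$-edges adjacent and its two $b$-edges adjacent. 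Your (III), with the two $a$-edges separated by the $c$-edge, is killed immediately by the degree-$3$ edge-matching argument (fourth of Figure \ref{edges2}); it does not ``collapse into a relabeling'' of anything.

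The more serious gap is structural: ``starting from a single tile and an admissible vertex at one of its corners'' does not by itself make the propagation terminate, because you have no control over the degrees of the surrounding vertices and the case tree explodes. The paper's engine is Lemma \ref{base_tile} (every pentagonal tiling of the sphere contains a tile with at least four degree-$3$ vertices, the fifth of degree $3$, $4$ or $5$), supplemented by Lemmas \ref{base_tile2} and \ref{base_tile3} bounding $f$ from below when $3^5$- or $3^44$-tiles are absent, and by Lemmas \ref{deg3a}--\ref{hdeg} constraining how angles distribute over degree-$3$ vertices. These allow one to tile a full partial neighborhood first and reduce the unknown to a single high-degree vertex $H$ with only a handful of admissible combinations, after which the adjacent angle deduction closes each case. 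You correctly identify that high-degree vertices ``must be systematically ruled out rather than merely ignored,'' but you supply no mechanism for doing so; that mechanism is the actual content of the proof rather than a detail to be filled in. (A small further point: the surviving tilings form a two-parameter family, not a one-parameter one, as the subdivision is determined by the free location of the point $P$ in Figure \ref{platonic_subdivision}.)
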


\begin{figure}[htp]
\centering
\begin{tikzpicture}[>=latex,scale=1]


\draw
	(234:0.33) -- (162:0.33) -- (90:0.33)
	(162:0.33) -- (162:0.65)
	(18:0.65) -- (54:0.8) -- (90:0.65)
	(54:0.8) -- (54:1.2)
	(-18:0.8) -- (-54:0.65) -- (-90:0.8)
	(-54:0.65) -- (-54:0.33)
	(126:1.2) -- (198:1.2) -- (270:1.2)
	(198:1.2) -- (198:0.8);

\draw[line width=1.5]
	(-54:0.33) -- (18:0.33) -- (90:0.33)
	(18:0.33) -- (18:0.65)
	(198:0.8) -- (234:0.65) -- (270:0.8)
	(234:0.33) -- (234:0.65)
	(-90:1.2) -- (-18:1.2) -- (54:1.2)
	(-18:1.2) -- (-18:0.8)
	(90:0.65) -- (126:0.8) -- (162:0.65)
	(126:0.8) -- (126:1.2);
	
\draw[dash pattern=on 1pt off 1pt]
	(-54:0.33) -- (234:0.33)
	(90:0.33) -- (90:0.65) 
	(18:0.65) -- (-18:0.8)
	(162:0.65) -- (198:0.8)
	(-90:1.2) -- (-90:0.8)
	(54:1.2) -- (126:1.2);


\foreach \a in {0,1,2,3}
{

\begin{scope}[xshift=3.5cm,rotate=90*\a]

\draw
	(0.4,0) -- (0.6,0.25) -- (0.25,0.6)
	(0.6,0.25) -- (15:1)
	(0:1.3) -- (30:1.3) -- (60:1.3)
	(45:1) -- (30:1.3);

\draw[line width=1.5]
	(0,0) -- (0.4,0)
	(-45:1) -- (-15:1) -- (15:1)
	(0.6,-0.25) -- (-15:1) -- (0:1.3)
	(60:1.3) -- (60:1.6);

\draw[dash pattern=on 1pt off 1pt]
	(0.25,0.6) -- (0,0.4)
	(15:1) -- (45:1)
	(60:1.3) -- (90:1.3);

\end{scope}
}


\foreach \a in {0,...,4}
{
\begin{scope}[xshift=8.2cm, rotate=72*\a]

\draw
	(0,0) -- (18:0.45) -- (36:0.7) -- (72:0.7)
	(0:0.7) -- (6:1.1)
	(30:1.1) -- (36:0.7)
	(-18:1.1) -- (6:1.1) -- (30:1.1)
	(-9:1.4) -- (6:1.1) -- (21:1.4) -- (39:1.4) -- (54:1.1)
	(-9:1.4) -- (0:1.6) -- (12:1.6)
	(39:1.4) -- (48:1.6) 
	(12:1.6) -- (30:1.9) -- (48:1.6)
	(54:1.9) -- (30:1.9) -- (6:1.9)
	(-18:1.9) -- (0:1.6)
	(6:1.9) -- (-6:2.2)
	(30:1.9) -- (42:2.2)
	(-30:2.2) -- (-6:2.2) -- (18:2.2) -- (18:2.5);

\draw[dash pattern=on 1pt off 1pt]
	(18:0.45) -- (0:0.7)
	(30:1.1) -- (54:1.1)
	(-9:1.4) -- (-24:1.6)
	(12:1.6) -- (21:1.4)
	(6:1.9) -- (-18:1.9)
	(-54:2.2) -- (-30:2.2);

\draw[line width=1.5]
	(0,0) -- (18:0.45)
	(0:0.7) -- (6:1.1)
	(-18:1.1) -- (6:1.1) -- (30:1.1)
	(-9:1.4) -- (6:1.1) -- (21:1.4)
	(30:1.9) -- (42:2.2)
	(12:1.6) -- (30:1.9) -- (48:1.6)
	(54:1.9) -- (30:1.9) -- (6:1.9)
	(18:2.2) -- (18:2.5);

\end{scope}
}
	
\end{tikzpicture}
\caption{Pentagonal subdivision tilings for $a^2b^2c$.}
\label{subdivision_tiling}
\end{figure}

Pentagonal subdivision is introduced in Section \ref{pdiv}. The operation can be applied to any tiling on an oriented surface. Since the dual tiling has the same pentagonal subdivision tiling, the five Platonic solids give three pentagonal subdivision tilings, in Figure \ref{subdivision_tiling}. We already proved in \cite{ay1,gsy} that edge-to-edge tilings of the sphere by (the minimal number of) $12$ congruent pentagons is the deformed dodecahedron given by the first of Figure \ref{subdivision_tiling}, and is exactly the first family in the theorem.

The pentagonal subdivision tilings allow two free parameters. Therefore tilings for the edge combination $a^2b^2c$ form a two dimensional moduli. We have detailed description of the moduli in \cite{wy3}.

\begin{theorem*}
Edge-to-edge tilings of the sphere by congruent pentagons with the edge combination $a^3bc$ ($a,b,c$ distinct) are the following:
\begin{enumerate}
\item Double pentagonal subdivision of the octahedron (or cube), with $48$ tiles.
\item Double pentagonal subdivision of the icosahedron (or dodecahedron), with $120$ tiles.
\end{enumerate}
\end{theorem*}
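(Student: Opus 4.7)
The plan is to establish existence by explicit construction of the two double pentagonal subdivisions, and then prove uniqueness by a case analysis of possible vertex and edge configurations. For existence, I would show that applying the double pentagonal subdivision to the octahedron (equivalently the cube) yields a tiling by $48$ congruent pentagons whose edges fall into three orbits under the octahedral symmetry group, giving the edge combination $a^3bc$; the icosahedral case is analogous and yields $120$ pentagons. Congruence of all tiles follows from the equivariance of the subdivision construction under the symmetry group of the base solid, and dual-invariance explains why octahedron and cube, and icosahedron and dodecahedron, give the same tilings.

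For uniqueness I would first classify the possible cyclic arrangements of $a,a,a,b,c$ around a pentagon up to rotation and reflection; these are $aaabc$ (with $b$ adjacent to $c$) and $aabac$ (with $b,c$ separated by a single $a$). Since at every vertex each edge must be matched with an edge of the same length, every vertex has an even number of $b$-edges and an even number of $c$-edges, which together with the pentagon angle sum $\alpha_1+\cdots+\alpha_5 = 3\pi + 4\pi/f$ (where $f$ is the number of tiles) and the vertex angle sum $2\pi$ drastically restricts the admissible vertex types. Using the basic lemmas of Section \ref{basic_facts}, I would enumerate vertex types, deduce numerical constraints on $f$, and obtain a short list of candidate local configurations at each vertex.

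The hard part will be the propagation: starting from a single tile, one must inductively determine its neighbors from edge-length and angle compatibility, showing that the local choices are forced and that the only globally consistent configurations are the two subdivisions. I would organize this as a tree of subcases indexed by the edge arrangement ($aaabc$ versus $aabac$) and the vertex types realized, eliminating inconsistent branches by edge-length or angle contradictions, and identify each surviving tiling with the double pentagonal subdivision of the octahedron ($f=48$) or of the icosahedron ($f=120$) through its symmetry group action on the combinatorial structure. I expect one of the two edge arrangements to be ruled out early by parity or matching arguments, with the main difficulty lying in the detailed tile-by-tile propagation for the surviving arrangement and in distinguishing, among the numerically admissible values of $f$, exactly the two that are geometrically realizable.
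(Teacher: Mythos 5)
Your overall architecture (existence by explicit construction, uniqueness by enumerating edge arrangements and vertex types and then propagating tile-by-tile) is the same as the paper's, but several of your concrete steps have genuine gaps. First, the parity claim ``every vertex has an even number of $b$-edges'' is false as stated: a degree $3$ vertex can perfectly well have edges $b,a,a$. The correct constraint, which the paper formalizes as adjacent angle deduction, is that each $b$-edge at a vertex is flanked by two corners, so the number of corner-sides of length $b$ among the angles at a vertex is even; this is what forces, e.g., an $ab$-angle at a vertex to be matched by another angle with a $b$-side. Second, and more seriously, ``starting from a single tile'' and propagating has no finiteness anchor. The paper's case analysis is only finite because of Lemma \ref{base_tile}: every pentagonal tiling of the sphere contains a tile whose vertices have degree $3$ except possibly one of degree $4$ or $5$. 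Without this counting lemma (and its refinements Lemmas \ref{base_tile2}, \ref{base_tile3} bounding $f$ from below), the degree of the exceptional vertex $H$ is unbounded and your tree of subcases never closes. Your expectation that the separated arrangement $aabac$ dies early is correct, but the mechanism is not parity: it is Lemma \ref{deg3a} (an angle forced to appear at every degree $3$ vertex must occur at least twice in the pentagon), applied to the unique $a^2$-angle.

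Two further points. In the surviving case the angles become completely determined ($\alpha=\tfrac{1}{2}\pi$, $\gamma=\delta=\tfrac{2}{3}\pi$, $\beta,\epsilon$ depending on $f$), and the resulting Diophantine analysis of vertex angle sums (Table \ref{AVC32b}) admits not only $f=48,120$ but also $f=72$ and other values; $f=72$ passes every numerical test and can only be excluded by the propagation argument showing that a forced vertex $\epsilon^2\cdots=\delta\epsilon^3$ cannot be completed consistently with the edge lengths. So ``distinguishing the geometrically realizable $f$'' is not a numerical afterthought but requires the full combinatorial propagation. Finally, in your existence argument, the fact that the edges fall into three orbits under the symmetry group does not give three \emph{distinct} lengths: the double pentagonal subdivision of the tetrahedron also has three edge orbits but satisfies $b=c$, which is exactly why it is excluded from the theorem. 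For the octahedron and icosahedron you must actually verify $b\neq c$ (the paper computes $b\approx 0.0840\pi$, $c\approx 0.1627\pi$ for $f=48$), and you must also verify that a spherical pentagon with the prescribed angles and three equal sides exists at all, which the paper does by a monotonicity argument on the area function $A(a)$.
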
 

Double pentagonal subdivision is introduced in Section \ref{dpdiv}. Unlike pentagonal subdivision, each tiling allows only one specific pentagon, and we provide the exact values in Section \ref{dpdiv}. 

We also have the double pentagonal subdivision of the tetrahedron, with $24$ tiles. However, the tiling is not included in the theorem because the pentagon has $b=c$, which means the tiling has the edge combination $a^3b^2$. We still give the exact values for this tiling in Section \ref{dpdiv}. We also remark that the tiling is a degenerate case ($a=c$ in $a^2b^2c$) of the pentagonal subdivision of the octahedron. The discussion of the tiling from this viewpoint can be found in \cite[Section 7.1]{wy2}.

\begin{figure}[htp]
\centering
\begin{tikzpicture}[>=latex,scale=1]


\foreach \a in {0,1,2,3}
{
\begin{scope}[rotate=-15+90*\a]	

\draw
	(0,0) -- (35:0.4)
	(0:1) -- (-5:1.3) -- (0:1.6) -- (15:1.8) -- (-15:1.9) -- (0:2.2)
	(45:1.9) -- (75:1.9)
	(50:1.3) -- (45:1.6) -- (30:1.8)
	(60:2.2) -- (60:2.6)
	(22.5:0.8) -- (22.5:1.3)
	(-5:1.3) -- (-40:1.3)
	(-22.5:0.8) -- (35:0.4)
	(15:1.8) -- (30:1.8)
	(22.5:1.3) -- (45:1.6) -- (67.5:1.6)
	(67.5:1.6) -- (45:1.9)
	(0:2.2) -- (-30:2.2)
	(-22.5:0.8) -- (0:1) -- (22.5:0.8);	

\draw[line width=1.5]
	(22.5:0.8) -- (45:1) -- (67.5:0.8)
	(0:2.2) -- (30:2.2) -- (45:1.9)
	(-5:1.3) -- (0:1.6) -- (15:1.8);

\draw[densely dashed]
	(35:0.4) -- (45:1) -- (50:1.3)
	(30:1.8) -- (30:2.2) -- (60:2.2)
	(-22.5:1.6) -- (0:1.6) -- (22.5:1.3);

\end{scope}
}


\foreach \a in {0,...,4}
{
\begin{scope}[xshift=5.5cm, rotate=72*\a]	

\draw
	(0,0) -- (-18:0.3)
	(4:0.8) -- (0:1) 
	(18:0.5) -- (36:0.6) -- (54:0.6)
	(36:0.6) -- (32:0.8) -- (36:1) -- (36:1.2) -- (36:1.4) -- (25:1.3) 
	(18:0.5) -- (54:0.3)
	(54:0.6) -- (54:0.9)
	(4:0.8) -- (32:0.8)
	(36:1.9) -- (27:1.6)
	(9:1.2) -- (0:1) -- (-9:1.2)
	(-27:1.7) -- (-36:1.9) %
	(9:1.5) -- (0:1.6) -- (-6:1.4)
	(-27:1.5) -- (-36:1.4) 
	(0:1.6) -- (-9:1.7) -- (0:1.9)
	(9:1.2) -- (-6:1.4)
	(18:1) -- (25:1.3)
	(-9:1.2) -- (-36:1.2)
	(-27:1.5) -- (-45:1.6)
	(-9:1.7) -- (-27:1.7)
	(9:1.5) -- (18:1.8)
	(0:1) -- (18:1)
	(54:0.9) -- (72:1) 
	(0:1.9) -- (9:2.1) -- (27:2.1) -- (36:1.9)
	(18:1.8) -- (36:1.9) -- (54:1.9) -- (45:2.1) -- (63:2.1)
-- (81:2.1)
	(6:2.35) -- (-9:2.1)
	(54:2.7) -- (54:2.35) -- (78:2.35)
	;
	
\draw[line width=1.5]
	(-18:0.6) -- (0:0.6) -- (18:0.5)
	(25:1.3) -- (16:1.4) -- (9:1.5)
	(-6:1.4) -- (-16:1.4) -- (-27:1.5)
	(-9:1.7) -- (0:1.9) -- (9:2.1) 
	(32:0.8) -- (36:1) -- (36:1.2)
	(45:2.1) -- (30:2.35) -- (6:2.35);

\draw[dash pattern=on 1pt off 1pt]
	(-18:0.3) -- (0:0.6) -- (4:0.8)
	(18:1) -- (36:1) -- (54:0.9)
	(27:1.6) -- (16:1.4) -- (9:1.2)
	(-9:1.2) -- (-16:1.4) -- (-27:1.7)
	(-18:1.9) -- (0:1.9) -- (18:1.8)
	(27:2.1) -- (30:2.35) -- (54:2.35);

\end{scope}
}

\end{tikzpicture}
\caption{Double pentagonal subdivision tilings for $a^3bc$.}
\label{dsubdivision_tiling}
\end{figure}

We developped the general combinatorial theory of pentagonal subdivision and double pentagonal subdivision on any surface in \cite{yan2}. There is a more fundamental theory of simple pentagonal subdivision of quadrilateral tilings, that underlies the two pentagonal subdivisions.

The edge combination $a^3b^2$ is more complicated due to less edge length information. The second paper \cite{wy2} of the series will handle this combination, where we find fifteen tilings. The edge combination $a^5$ (i.e., equilateral) has no edge length information, and requires a completely different technique. The third paper \cite{awy} will handle this case, where we find eight tilings. 

After the current series of three papers, the remaining case is the edge combination $a^4b$, which we call {\em almost equilateral}. The case is much more challenging, and will be the subject of another series \cite{ly1,ly2}.

This paper is organized as follows. Section \ref{basic_facts} is the basic facts and techniques about tilings of the sphere by congruent pentagons. Section \ref{div} introduces pentagonal and double pentagonal subdivisions. Sections \ref{2a2bc} and \ref{3abc} prove the two classification theorems.

We would like to thank Ka Yue Cheuk and Ho Man Cheung. Some of their initial work on the pentagonal tilings of the sphere are included in this paper. We would also like to thank Hoi Ping Luk, whose MPhil thesis \cite{luk} underlies the computation of the anglewise vertex combination in Section \ref{avc}, and who contributed to the efficient notation for the adjacent angle deduction in Section \ref{aad}.

\section{Basic Facts}
\label{basic_facts}

By \cite[Lemma 1]{gsy}, any tiling of the sphere has a simple tile, in the sense that the boundary does not cross itself. Since all tiles in our tiling are congruent, all tiles are simple.

Throughout this paper, a pentagonal tiling is always an {\em edge-to-edge} tiling of the sphere by {\em simple} pentagons, such that all vertices have degree $\ge 3$.

\subsection{Vertex}

Let $v,e,f$ be the numbers of vertices, edges, and tiles. Let $v_k$ be the number of vertices of degree $k$. We have
\begin{align*}
2
&=v-e+f, \\
2e=5f
&=\sum_{k=3}^{\infty}kv_k=3v_3+4v_4+5v_5+\cdots, \\
v
&=\sum_{k=3}^{\infty}v_k=v_3+v_4+v_5+\cdots.
\end{align*}
Then it is easy to derive $2v=3f+4$ and  
\begin{align}
\tfrac{f}{2}-6
&=\sum_{k\ge 4}(k-3)v_k=v_4+2v_5+3v_6+\cdots, \label{vcountf} \\
v_3
&=20+\sum_{k\ge 4}(3k-10)v_k=20+2v_4+5v_5+8v_6+\cdots. \label{vcountv}
\end{align}
By \eqref{vcountf}, $f$ is an even integer $\ge 12$. Since tilings by $12$ congruent pentagons have been classified in \cite{ay1, gsy}, we may assume $f>12$. Moreover, by \eqref{vcountf}, $f=14$ implies $v_4=1$ and $v_k=0$ for $k>4$. By \cite[Theorem 1]{yan}, this is impossible. Therefore we will always assume that $f$ is an even integer $\ge 16$, in all the papers in the series.

The equality \eqref{vcountv} shows that most vertices have degree $3$. We call vertices of degree $>3$ {\em high degree} vertices.

\begin{lemma}\label{base_tile}
Any pentagonal tiling of the sphere has a tile, such that four vertices have degree $3$ and the fifth vertex has degree $3$, $4$ or $5$.
\end{lemma}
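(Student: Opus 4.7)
The plan is to argue by contradiction with a double-counting argument on (tile, vertex) incidences. Call a tile \emph{good} if it satisfies the conclusion, i.e.\ at most one of its five vertices has degree $>3$ and no vertex has degree $>5$; otherwise call it \emph{bad}. Every bad tile falls into exactly one of two disjoint types: type $A$ has at least two vertices of degree $\ge 4$, while type $B$ has at most one vertex of degree $\ge 4$ but some vertex of degree $\ge 6$ — and this latter condition forces the unique high-degree vertex in a type-$B$ tile to have degree $\ge 6$. Suppose for contradiction every tile is bad, so $f=|A|+|B|$.

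Next I would run two independent double-counts. Summing $|\{v\in T:\deg v\ge 4\}|$ over tiles $T$ gives $\sum_{k\ge 4} kv_k$ (each degree-$k$ vertex lies in $k$ tiles), and is at least $2|A|+|B|$, so $\sum_{k\ge 4}kv_k\ge 2f-|B|$. Similarly, summing $|\{v\in T:\deg v\ge 6\}|$ gives $\sum_{k\ge 6}kv_k\ge |B|$, since each type-$B$ tile contributes at least $1$. Adding these two estimates eliminates $|B|$:
\[
\sum_{k\ge 4}kv_k+\sum_{k\ge 6}kv_k\ge 2f.
\]

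Finally, I would substitute $f=12+2v_4+4v_5+6v_6+8v_7+\cdots$ from \eqref{vcountf} and expand. The left side becomes $4v_4+5v_5+12v_6+14v_7+16v_8+18v_9+\cdots$ and the right side becomes $24+4v_4+8v_5+12v_6+16v_7+20v_8+24v_9+\cdots$, so the inequality reduces to
\[
-3v_5-2v_7-4v_8-6v_9-\cdots \ge 24.
\]
Since the left side is nonpositive while the right side is $24$, this is absurd; hence a good tile must exist.

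The main obstacle — and really the only subtle point — is the clean decomposition of ``bad'' into the two types $A$ and $B$: the definition of ``good'' combines an ``at most one'' condition with an ``at most degree $5$'' condition, and it is this second condition that lets one insert the auxiliary count of degree-$\ge 6$ incidences, which is what makes the combined estimate strong enough to contradict \eqref{vcountf}. Once the types are set up correctly, the rest is routine arithmetic with the Euler-type identity already derived in the paper.
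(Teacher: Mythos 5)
Your proof is correct and is essentially the paper's argument: both decompose a hypothetical all-bad tiling into tiles with two high-degree vertices versus tiles forced to contain a vertex of degree $\ge 6$, count incidences against $\sum_k kv_k$, and contradict \eqref{vcountf}. Indeed your combined estimate $\sum_{k\ge 4}kv_k+\sum_{k\ge 6}kv_k\ge 2f$ is exactly twice the paper's inequality $f\le 2v_4+\tfrac{5}{2}v_5+\sum_{k\ge 6}kv_k$, so the final arithmetic is the same.
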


We call the tile in the lemma a {\em special tile}. We have three types of special tiles, which we call $3^5$-tile, $3^44$-tile, and $3^45$-tile. The neighbourhood of this special tile is given by the first three of Figure \ref{nhd}. For $3^44$-tile and $3^45$-tile, we indicate the fifth vertex $H$ (of degree $4$ or $5$) by $\bullet$. The fourth of Figure \ref{nhd} is the common part of the three types of neighbourhoods, which we call a {\em partial neighbourhood}. We always label the tiles in the partial neighbourhood as in the picture.

\begin{figure}[htp]
\centering
\begin{tikzpicture}[>=latex]

\foreach \x in {0,...,4}
\draw[xshift=-3cm, rotate=-72*\x]
	(-54:0.5) -- (18:0.5) -- (18:0.9) -- (-18:1.2) -- (-54:0.9);

\foreach \a in {0,1,2}
{
\begin{scope}[xshift=3*\a cm]

\foreach \x in {0,1,2}
\draw[rotate=-72*\x]
	(18:0.5) -- (18:0.9) -- (-18:1.2) -- (-54:0.9) -- (-54:0.5);

\foreach \x in {0,...,4}
\draw[rotate=-72*\x]
	(18:0.5) -- (90:0.5);

\fill 
	(90:0.5) circle (0.1);

\end{scope}
}

\foreach \a in {0,1}
{
\node[xshift=3*\a cm] at (90:0.2) {\small $H$};

\draw[xshift=6*\a cm]
	(90:0.5) -- (65:1) -- (35:1.2) -- (18:0.9)
	(90:0.5) -- (115:1) -- (145:1.2) -- (162:0.9);
}

\draw 
	(65:1) -- (75:1.4) -- (105:1.4) -- (115:1);
	
\draw[xshift=3cm]
	(90:0.5) -- (55:0.9) -- (35:1.2) -- (18:0.9)
	(90:0.5) -- (125:0.9) -- (145:1.2) -- (162:0.9)
	(90:0.5) -- (90:1) -- (80:1.4) -- (60:1.4) -- (55:0.9)
	(90:0.5) -- (90:1) -- (100:1.4) -- (120:1.4) -- (125:0.9);

\begin{scope}[xshift=6cm]

\node[draw,shape=circle, inner sep=0.5] at (0,0) {\small $1$};
\node[draw,shape=circle, inner sep=0.5] at (48:0.75) {\small $2$};
\node[draw,shape=circle, inner sep=0.5] at (-18:0.75) {\small $3$};
\node[draw,shape=circle, inner sep=0.5] at (-90:0.75) {\small $4$};
\node[draw,shape=circle, inner sep=0.5] at (198:0.75) {\small $5$};
\node[draw,shape=circle, inner sep=0.5] at (135:0.75) {\small $6$};

\end{scope}

\end{tikzpicture}
\caption{Neighborhood and partial neighbourhood of a special tile.}
\label{nhd}
\end{figure}

\begin{proof}
If a pentagonal tiling of the sphere has no special tile, then each tile either has at least one vertex of degree $\ge 6$, or has at least two vertices of degree $4$ or $5$. Since a degree $k$ vertex is shared by at most $k$ tiles, the number of tiles of the first kind is $\le \sum_{k\ge 6}kv_k$, and the number of tiles of the second kind is $\le\frac{1}{2}(4v_4+5v_5)$. Therefore we have
\[
f\le \tfrac{1}{2}(4v_4+5v_5)+\sum_{k\ge 6}kv_k.
\]
On the other hand, by \eqref{vcountf}, we have
\begin{align*}
f-\tfrac{1}{2}(4v_4+5v_5)-\sum_{k\ge 6}kv_k 
&=12+\sum_{k\ge 4}2(k-3)v_k-\tfrac{1}{2}(4v_4+5v_5)-\sum_{k\ge 6}kv_k \\
&= 12 + \tfrac{3}{2}v_5+\sum_{k\ge 6}(k-6)v_k
>0.
\end{align*}
We get a contradiction.
\end{proof}

\begin{lemma}\label{base_tile2}
If a pentagonal tiling of the sphere has no $3^5$-tile, then $f\ge 24$. Moreover, if $f=24$, then each tile is a $3^44$-tile. 
\end{lemma}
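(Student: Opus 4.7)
The plan is to combine a double count at the high-degree vertices with identity \eqref{vcountf}. The no-$3^5$-tile hypothesis means every tile has at least one vertex of degree $\ge 4$, so counting tile--(high-degree-vertex-of-tile) incidences from the tile side gives a total $\ge f$, while from the vertex side the same count equals $\sum_{k\ge 4} k\,v_k$ (in an edge-to-edge tiling each degree-$k$ vertex lies on exactly $k$ tiles). This yields the lower bound
\[
\sum_{k\ge 4} k\,v_k \;\ge\; f.
\]

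To turn this into a bound on $f$, I would split $\sum_{k\ge 4} k\,v_k = \sum_{k\ge 4}(k-3)v_k + 3\sum_{k\ge 4} v_k$. Identity \eqref{vcountf} fixes $\sum_{k\ge 4}(k-3)v_k = f/2-6$ exactly, and the trivial observation $k-3\ge 1$ for $k\ge 4$ gives $\sum_{k\ge 4} v_k \le \sum_{k\ge 4}(k-3)v_k = f/2-6$. Hence
\[
f \;\le\; \sum_{k\ge 4} k\,v_k \;\le\; 4\!\left(\frac{f}{2}-6\right) \;=\; 2f-24,
\]
which rearranges to $f\ge 24$.

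For the equality case $f=24$, both inequalities above are forced to be tight. Tightness of $\sum_{k\ge 4} v_k \le \sum_{k\ge 4}(k-3)v_k$ forces $v_k=0$ for every $k\ge 5$, so $v_4$ is the only nonzero high-degree count; then $v_4 = f/2-6 = 6$. Tightness of the double-count inequality $\sum_T h_T \ge f$, where $h_T$ is the number of degree-$\ge 4$ vertices on tile $T$ and each $h_T\ge 1$, forces $h_T=1$ for every tile. Combined with $v_k=0$ for $k\ge 5$, that unique high-degree vertex has degree exactly $4$, so every tile is a $3^44$-tile.

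There is essentially no obstacle: the hypothesis plugs directly into \eqref{vcountf} through the double count, and the equality analysis is mechanical. The only thing to be careful about is the direction of each inequality (lower bound on $\sum k\,v_k$ from the double count, upper bound via $\sum v_k \le \sum(k-3)v_k$) so that they chain in the right order.
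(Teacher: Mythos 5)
Your proposal is correct and is essentially the paper's own argument: the same double count $f\le\sum_{k\ge 4}kv_k$ combined with \eqref{vcountf}, with your bound $\sum_{k\ge 4}kv_k\le 4\sum_{k\ge 4}(k-3)v_k$ being an algebraic rearrangement of the paper's slack term $\sum_{k\ge 4}3(k-4)v_k$. Your equality analysis (each tile has exactly one high-degree vertex, necessarily of degree $4$) is the same as the paper's, just spelled out slightly more explicitly.
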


\begin{proof}
If there is no $3^5$-tile, then each tile has at least one vertex of degree $\ge 4$. This implies $f\le\sum_{k\ge 4}kv_k$. By \eqref{vcountf}, we have
\begin{align*}
f=2f-f
&\ge 24+ \sum_{k\ge 4}4(k-3)v_k -\sum_{k\ge 4}kv_k \\
&=24+ \sum_{k\ge 4}3(k-4)v_k
\ge 24.
\end{align*}
Moreover, the equality happens if and only if $v_k=0$ for $k>4$ and $f=\sum_{k\ge 4}kv_k=4v_4$. By no $3^5$-tile, this means that each tile is a $3^44$-tile.
\end{proof}

\begin{lemma}\label{base_tile3}
If a pentagonal tiling of the sphere has no $3^5$-tile and no $3^44$-tile, then $f\ge 60$. Moreover, if $f=60$, then each tile is a $3^45$-tile. 
\end{lemma}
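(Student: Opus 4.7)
The plan is to mimic the double-counting argument of Lemma \ref{base_tile2}, but refined so as to track degree-$4$ and degree-$\ge 5$ vertices separately. Under the hypothesis that no tile is a $3^5$-tile or a $3^44$-tile, every tile falls into one of two kinds: (A) it has at least one vertex of degree $\ge 5$; or (B) it has no vertex of degree $\ge 5$, in which case the exclusion of $3^5$-tiles and $3^44$-tiles forces at least two of its vertices to have degree exactly $4$. Let $f'$ and $f''$ denote the number of tiles of kinds (A) and (B), so that $f = f' + f''$.

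To bound $f'$, I would count tile-vertex incidences at vertices of degree $\ge 5$: each kind-(A) tile contributes at least one such incidence, so $f' \le \sum_{k\ge 5} k v_k$. To bound $f''$, I would count tile-vertex incidences at vertices of degree $4$: each kind-(B) tile contributes at least two such incidences, while there are only $4v_4$ incidences in total, hence $f'' \le 2v_4$. Adding,
\[
f \;\le\; 2v_4 + 5v_5 + 6v_6 + 7v_7 + \cdots .
\]
Subtracting this from \eqref{vcountf} in the form $f = 12 + 2v_4 + 4v_5 + 6v_6 + 8v_7 + \cdots$ yields
\[
v_5 \;\ge\; 12 + \sum_{k\ge 7}(k-6)\,v_k,
\]
and feeding this lower bound on $v_5$ back into \eqref{vcountf} produces
\[
f \;\ge\; 60 + 2v_4 + 6v_6 + \sum_{k\ge 7}(6k-30)\,v_k \;\ge\; 60,
\]
which is the main inequality.

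For the ``moreover'' part, equality $f=60$ forces $v_4=v_6=0$, $v_k=0$ for every $k\ge 7$, and $v_5=12$. Then $f'' \le 2v_4 = 0$, so every tile is of kind (A), and equality in $f = f' \le 5v_5 = 60$ forces each tile to have exactly one vertex of degree $\ge 5$; since $v_k=0$ for $k\ge 6$, that vertex must have degree $5$, so every tile is a $3^45$-tile. The main subtlety in the plan is choosing the right refinement of the bookkeeping used in Lemma \ref{base_tile2}: simply lumping all degree-$\ge 4$ vertices together as before only yields $f\ge 24$, and one must separate the ``cheap'' degree-$4$ incidences (which pair up to support kind-(B) tiles) from the ``expensive'' degree-$\ge 5$ incidences that directly support kind-(A) tiles in order to reach the icosahedral threshold $60$.
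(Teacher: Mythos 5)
Your proof is correct and follows essentially the same route as the paper: the key combinatorial input is identical (every tile has either two degree-$4$ vertices or one vertex of degree $\ge 5$, giving $f\le 2v_4+\sum_{k\ge 5}kv_k$), and your two-step substitution via the lower bound on $v_5$ amounts to the same linear combination $5f-4f$ of this bound with \eqref{vcountf} that the paper uses, with the same equality analysis.
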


\begin{proof}
If there is no $3^5$-tile and no $3^44$-tile, then each tile either has at least two vertices of degree $4$, or has at least one vertex of degree $\ge 5$. This implies $f\le \frac{1}{2}4v_4+\sum_{k\ge 5}kv_k$. By \eqref{vcountf}, we have
\begin{align*}
f=5f-4f
&\ge 60+\sum_{k\ge 4}10(k-3)v_k-8v_4-\sum_{k\ge 5}4kv_k \\
&=60+2v_4+\sum_{k\ge 5}6(k-5)v_k
\ge 60.
\end{align*}
Moreover, the equality happens if and only if $v_4=v_6=v_7=\cdots=0$ and $f= \frac{1}{2}4v_4+\sum_{k\ge 5}kv_k=5v_5$. By no $3^5$-tile and no $3^44$-tile, this means that each tile is a $3^45$-tile.
\end{proof}

\subsection{Angle}

The sum of all angles ({\em angle sum}) at a vertex is $2\pi$. The following is the {\em angle sum for pentagon}.

\begin{lemma}\label{anglesum}
If all tiles in a tiling of the sphere by $f$ pentagons have the same five angles $\alpha,\beta,\gamma,\delta,\epsilon$, then 
\[
\alpha+\beta+\gamma+\delta+\epsilon
=(3 + \tfrac{4}{f})\pi.
\]
\end{lemma}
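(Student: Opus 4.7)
The plan is to count the total angle sum on the sphere in two different ways and equate the results.

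First I would compute the total angle sum by tiles. Since each of the $f$ pentagons contributes the same five angles $\alpha,\beta,\gamma,\delta,\epsilon$, the total is $f(\alpha+\beta+\gamma+\delta+\epsilon)$.

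Next I would compute the same total by vertices. By the hypothesis that the tiling is edge-to-edge and every vertex has degree $\ge 3$, the angles around each vertex sum to exactly $2\pi$, so the total angle sum equals $2\pi v$, where $v$ is the number of vertices.

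Equating the two expressions gives $f(\alpha+\beta+\gamma+\delta+\epsilon) = 2\pi v$. Now I would invoke the identity $2v = 3f + 4$, which was already derived in the excerpt from Euler's formula together with $2e = 5f$. Substituting $v = (3f+4)/2$ and dividing by $f$ yields
\[
\alpha+\beta+\gamma+\delta+\epsilon = \frac{2\pi v}{f} = \frac{\pi(3f+4)}{f} = 3\pi + \frac{4}{f}\pi,
\]
as claimed. There is no real obstacle here; the only thing to be careful about is making sure the angle-sum-at-each-vertex is genuinely $2\pi$ (which is automatic for spherical tilings without degenerate vertices) and that the identity $2v = 3f+4$ from the opening of Section \ref{basic_facts} is applied correctly.
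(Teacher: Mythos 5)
Your proposal is correct and matches the paper's own proof essentially verbatim: both count the total angle sum once by tiles as $f(\alpha+\beta+\gamma+\delta+\epsilon)$ and once by vertices as $2\pi v$, then substitute $2v=3f+4$. No further comment is needed.
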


\begin{proof}
Since the angle sum of each vertex is $2\pi$, the total sum of all angles in the tiling is $2\pi v$. Moreover, the sum of five angles in each tile is $\Sigma=\alpha+\beta+\gamma+\delta+\epsilon$, and the total sum of all angles is $f\Sigma$. Therefore we get $2\pi v = f\Sigma$. By $3f=2v-4$, we further get
\[
\Sigma = 2\pi\tfrac{v}{f} = (3 + \tfrac{4}{f})\pi. \qedhere
\]
\end{proof}

The lemma does not require that the angles are arranged in the same way in all tiles, and does not require that the edges are straight (i.e., great arcs). However, if we additionally know that all edges are straight, then all tiles have the same area $\Sigma-3\pi$, and the equality in the lemma follows from the fact that the total area $f(\Sigma-3\pi)$ is the area $4\pi$ of the sphere.

The angles in Lemma \ref{anglesum} refer to the values, and some angles among the five may have the same value. For example, if the five values are $\alpha,\alpha,\alpha,\beta,\beta$, with $\alpha\ne\beta$ (distinct values), then we say the pentagon has the {\em angle combination} $\alpha^3\beta^2$. The following is about the distribution of angles.

\begin{lemma}\label{deg3a}
If an angle appears at every degree $3$ vertex in a tiling of the sphere by pentagons with the same angle combination, then the angle appears at least two times in the pentagon.
\end{lemma}

\begin{proof}
If an angle $\theta$ appears only once in the pentagon, then the total number of times $\theta$ appears in the whole tiling is $f$, and the total number of non-$\theta$-angles is $4f$. If $\theta$ appears at every degree $3$ vertex, then $f\ge v_3$, and non-$\theta$-angles appear $\le 2v_3$ times at degree $3$ vertices. Moreover, non-$\theta$-angles appear $\le \sum_{k\ge 4}kv_k$ times at high degree vertices. Therefore we have
\[
4v_3 \le 4f \le 2v_3+\sum_{k\ge 4}kv_k.
\]
On the other hand, by \eqref{vcountv}, we have
\begin{align*}
4v_3-2v_3-\sum_{k\ge 4}kv_k 
&=40+\sum_{k\ge 4}(2(3k-10)-k)v_k \\
&=40+\sum_{k\ge 4}5(k-4)v_k
>0.
\end{align*}
We get a contradiction.
\end{proof}

Unlike Lemma \ref{anglesum}, which is explicitly about the values of angles, Lemma \ref{deg3a} only counts the number of angles. The counting only requires us to distinguish angles. Besides values, we may also use the bounding edge lengths to distinguish angles. For example, the five angles in the first of Figure \ref{pentagon} are distinguished as $ab$-angle $\alpha$, $a^2$-angle $\beta$, $b^2$-angle $\gamma$, $ac$-angle $\delta$, and $bc$-angle $\epsilon$. In fact, we may even use both values and bounding edges to distinguish angles. Lemma \ref{deg3a} (and the subsequent Lemmas \ref{deg3b}, \ref{deg3c}, \ref{hdeg}) applies to any way that distinguishes angles. 

\begin{lemma}\label{deg3b}
If an angle appears at least twice at every degree $3$ vertex in a tiling of the sphere by pentagons with the same angle combination, then the angle appears at least three times in the pentagon. 
\end{lemma}

\begin{proof}
If an angle $\theta$ appears only once in the pentagon, then by Lemma \ref{deg3a}, it cannot appear at every degree $3$ vertex. If it appears twice in the pentagon, then the total number of $\theta$ in the whole tiling is $2f$, and the total number of non-$\theta$-angles is $3f$. If we also know that $\theta$ appears at least twice at every degree $3$ vertex, then $2f\ge 2v_3$, and non-$\theta$-angles appear $\le v_3$ times at degree $3$ vertices. Moreover, non-$\theta$-angles appear $\le \sum_{k\ge 4}kv_k$ times at high degree vertices. Therefore 
\[
3v_3 \le 3f \le v_3+\sum_{k\ge 4}kv_k.
\]
This leads to the same contradiction as in the proof of Lemma \ref{deg3a}.
\end{proof}

The proof of Lemma \ref{deg3b} can be easily modified to get the following.

\begin{lemma}\label{deg3c}
If two angles together appear at least twice at every degree $3$ vertex in a tiling of the sphere by pentagons with the same angle combination, then the two angles together appear at least three times in the pentagon. 
\end{lemma}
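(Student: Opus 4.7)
The plan is to mirror the proof of Lemma \ref{deg3b} almost verbatim, but treat the pair $\{\theta_1,\theta_2\}$ as a single combined "marker" that plays the role of the single angle $\theta$ in the previous argument. Suppose for contradiction that the two angles together appear only $t\le 2$ times in the pentagon. I want to derive, in each case, the same pair of inequalities that led to a contradiction via \eqref{vcountv} before, namely $f\ge v_3$ on one hand and $3f\le v_3+\sum_{k\ge 4}kv_k$ on the other.

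First dispose of the degenerate cases. If $t=0$, neither angle occurs in any tile, contradicting the hypothesis that they appear at degree $3$ vertices at all. If $t=1$, then exactly one of the two values, say $\theta_1$, occurs (once) in the pentagon, and the hypothesis reduces to: $\theta_1$ appears at least twice at every degree $3$ vertex. This is precisely the hypothesis of Lemma \ref{deg3b} applied to $\theta_1$, whose conclusion forces $\theta_1$ to appear $\ge 3$ times in the pentagon, contradicting $t=1$.

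The main case is $t=2$. The total count of $\{\theta_1,\theta_2\}$-angles in the whole tiling is $2f$, and of non-$\{\theta_1,\theta_2\}$ angles is $3f$. Because $\{\theta_1,\theta_2\}$ contributes at least two angles at each of the $v_3$ degree $3$ vertices, we get $2f\ge 2v_3$, i.e.\ $f\ge v_3$. Moreover, at each degree $3$ vertex at most one of the three angles lies outside $\{\theta_1,\theta_2\}$, so the non-$\{\theta_1,\theta_2\}$ angles contribute at most $v_3$ at degree $3$ vertices and at most $\sum_{k\ge 4}kv_k$ at higher-degree vertices, giving
\[
3f \le v_3+\sum_{k\ge 4}kv_k.
\]
Combining these yields $3v_3\le 3f\le v_3+\sum_{k\ge 4}kv_k$, which is exactly the inequality at the end of the proof of Lemma \ref{deg3b}; substituting \eqref{vcountv} then contradicts $v_3=20+\sum_{k\ge 4}(3k-10)v_k$ in the same way.

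The only subtlety I anticipate is bookkeeping in the $t=1$ reduction and being explicit about the fact that "two angles together" is counted with multiplicity, so that the $2f$ total in the $t=2$ case is genuinely $2f$ even when $\theta_1=\theta_2$ as values but are distinguished by their bounding edge lengths (the remark following Lemma \ref{deg3a} legitimizes this viewpoint). Once that is pinned down, the rest is a direct transcription of the previous proof.
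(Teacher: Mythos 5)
Your proposal is correct and is essentially the paper's own argument: the paper gives no separate proof of Lemma \ref{deg3c}, stating only that the proof of Lemma \ref{deg3b} ``can be easily modified,'' and your combined-marker counting (total $2f$ pair-angles versus $3f$ others, giving $f\ge v_3$ and $3f\le v_3+\sum_{k\ge 4}kv_k$) is exactly that modification. Your explicit handling of the degenerate cases $t=0$ and $t=1$ (the latter reducing to Lemma \ref{deg3b}) is a sensible piece of bookkeeping the paper leaves implicit.
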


The following is about angles not appearing at degree $3$ vertices.

\begin{lemma}\label{hdeg}
Suppose an angle $\theta$ does not appear at degree $3$ vertices in a tiling of the sphere by pentagons with the same angle combination.
\begin{enumerate}
\item There can be at most one such angle $\theta$.
\item The angle $\theta$ appears only once in the pentagon.
\item $2v_4+v_5\ge 12$.
\item One of $\theta^3\rho$, $\theta^4$, $\theta^5$ is a vertex, where $\rho\ne\theta$.
\end{enumerate}
\end{lemma}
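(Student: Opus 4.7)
The plan is to prove all four parts by counting how often $\theta$ (or a pair of such angles) appears across the tiling, and comparing with the identity \eqref{vcountf}. The common backbone is that any angle avoiding all degree-$3$ vertices deposits all its tile-occurrences into the high-degree budget $\sum_{k\ge 4} k v_k$.

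First I would derive the universal ceiling $\sum_{k\ge 4} k v_k \le 2f-24$. Write $\sum_{k\ge 4} k v_k = \sum_{k\ge 4}(k-3)v_k + 3\sum_{k\ge 4}v_k$; the first piece is $f/2-6$ by \eqref{vcountf}, and since $v_k\le (k-3)v_k$ for $k\ge 4$, the second sum is also bounded by $f/2-6$. This immediately kills Parts (1) and (2). For (1), if two distinct angles $\theta_1,\theta_2$ both avoid degree $3$, their combined count is at least $2f$ and lies entirely at high-degree vertices, forcing $2f\le 2f-24$. For (2), if the unique $\theta$ appears $m\ge 2$ times in the pentagon, its total count $mf\ge 2f$ gives the same contradiction. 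Hence $\theta$ is unique and appears exactly once per pentagon, so exactly $f$ times in the tiling, all at high-degree vertices.

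For Parts (3) and (4) I sharpen the budget per vertex and pair it with the reformulation $f = 12 + 2v_4 + 4v_5 + 6v_6 + 8v_7 + \cdots$ of \eqref{vcountf}. For (3) the trivial per-vertex bound is the degree, giving $f\le 4v_4+5v_5+6v_6+7v_7+\cdots$. Subtracting the identity for $f$ leaves $2v_4 + v_5 \ge 12 + v_7 + 2v_8 + \cdots \ge 12$, which is what we want.

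For (4), assume none of $\alpha\theta^3$, $\theta^4$, $\theta^5$ occurs. Then a degree-$4$ vertex carries at most $2$ copies of $\theta$ (three would give $\alpha\theta^3$, four would give $\theta^4$), a degree-$5$ vertex carries at most $4$ (five would give $\theta^5$), and a vertex of degree $k\ge 6$ trivially carries at most $k$. Hence $f \le 2v_4 + 4v_5 + 6v_6 + 7v_7 + 8v_8 + \cdots$. Subtracting the identity for $f$ collapses to $0 \le -12 - v_7 - 2v_8 - \cdots$, a contradiction. The one delicate point is getting the per-vertex cap right in (4): at a degree-$5$ vertex the configuration $\alpha\theta^4$ is permitted because it is neither $\alpha\theta^3$ (that requires degree $4$) nor $\theta^5$, so the correct cap there is $4$ rather than $3$; this distinction is what makes the coefficients line up to produce the contradiction.
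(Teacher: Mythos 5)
Your proof is correct and follows essentially the same counting strategy as the paper: in each part you bound the number of occurrences of $\theta$ at high-degree vertices and compare with \eqref{vcountf}, and your ``universal ceiling'' $\sum_{k\ge 4}kv_k\le 2f-24$ is exactly the paper's identity $2f-\sum_{k\ge 4}kv_k=24+\sum_{k\ge 4}3(k-4)v_k$ in disguise. The per-vertex caps in part (4), including the cap of $4$ at degree-$5$ vertices, match the paper's argument precisely.
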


The first statement implies that the angle $\rho$ in the fourth statement must appear at a degree $3$ vertex.

\begin{proof}
Suppose two angles $\theta_1$ and $\theta_2$ do not appear at degree $3$ vertices. Then the total number of times these two angles appear is at least $2f$, and is at most the total number $\sum_{k\ge 4}kv_k$ of angles at high degree vertices. Therefore we have $2f\le \sum_{k\ge 4}kv_k$. On the other hand, by \eqref{vcountf}, we have
\[
2f - \sum_{k\ge 4}kv_k  
= 24 + \sum_{k\ge 4}3(k-4)v_k
>0.
\]
The contradiction proves the first statement.

The argument above also applies to the case $\theta_1=\theta_2$, which means the same angle appearing at least twice in the pentagon. This proves  the second statement. 

The first two statements imply that $\theta$ appears exactly $f$ times. Since this should be no more than the total number $\sum_{k\ge 4}kv_k$ of angles at high degree vertices, by \eqref{vcountf}, we have
\[
0
\ge f - \sum_{k\ge 4}kv_k  
= 12 - 2v_4 - v_5 + \sum_{k\ge 6}(k-6)v_k.
\]
This implies the third statement.

For the last statement, we assume that $\theta^3\rho,\theta^4,\theta^5$ are not vertices. This means that $\theta$ appears at most twice at every degree $4$ vertex, and at most four times at every degree $5$ vertex. Since $\theta$ also does not appear at degree $3$ vertices, the total number of times $\theta$ appears is $\le 2v_4+4v_5+\sum_{k\ge 6}kv_k$. However, the total number of times $\theta$ appears should also be $f$. Therefore $f\le 2v_4+4v_5+\sum_{k\ge 6}kv_k$. On the other hand, by \eqref{vcountf}, we have
\[
f-2v_4-4v_5-\sum_{k\ge 6}kv_k
= 12 + \sum_{k\ge 6}(k-6)v_k
>0.
\]
We get a contradiction.
\end{proof}

\subsection{Edge}

\begin{lemma}\label{edge_combo}
In an edge-to-edge tiling of the sphere by congruent pentagons, the edge lengths of the pentagon are arranged in one of the five ways in Figure \ref{edges1}, with distinct edge lengths $a,b,c$. 
\end{lemma}

The lemma is an extension of \cite[Proposition 7]{gsy}. We always use the normal line, the thick line and the dashed line to indicate distinct edge lengths $a,b,c$. 

The proof of lemma does not use any angle information. Therefore the lemma also applies to tilings by {\em edge congruent} pentagons. 

\begin{figure}[htp]
\centering
\begin{tikzpicture}[>=latex,scale=1]

\begin{scope}[xshift=-2.2cm]

\draw
	(0,0.4) -- node[above=-2] {\small $a$} ++(1,0);

\draw[line width=1.5]
	(0,-0.1) -- node[above=-2] {\small $b$} ++(1,0);

\draw[dashed]
	(0,-0.6) -- node[above=-2] {\small $c$} ++(1,0);

\end{scope}


\draw
	(90:0.8) -- (162:0.8) -- (234:0.8);

\draw[line width=1.5]
	(90:0.8) -- (18:0.8) -- (-54:0.8);

\draw[dashed]
	(-54:0.8) -- (-126:0.8);
	
\node at (0,0) {\small $a^2b^2c$};


\begin{scope}[xshift=2cm]

\foreach \x in {2,...,4}
\draw[rotate=72*\x]
	(90:0.8) -- (18:0.8);

\draw[line width=1.5]
	(162:0.8) -- (90:0.8);

\draw[dashed]
	(18:0.8) -- (90:0.8);
	
\node at (0,0) {\small $a^3bc$};

\end{scope}


\begin{scope}[xshift=4cm]

\draw
	(162:0.8) -- (234:0.8) -- (-54:0.8) -- (18:0.8);

\draw[line width=1.5]
	(162:0.8) -- (90:0.8) -- (18:0.8);
	
\node at (0,0) {\small $a^3b^2$};

\end{scope}


\begin{scope}[xshift=6cm]

\foreach \x in {-1,...,2}
\draw[rotate=72*\x]
	(90:0.8) -- (18:0.8);

\draw[line width=1.5]
	(-54:0.8) -- (-126:0.8);
	
\node at (0,0) {\small $a^4b$};

\end{scope}


\begin{scope}[xshift=8cm]

\foreach \x in {1,...,5}
\draw[rotate=72*\x]
	(90:0.8) -- (18:0.8);
\node at (0,0) {\small $a^5$};

\end{scope}

\end{tikzpicture}
\caption{Edge combinations suitable for tiling, with $a,b,c$ distinct.}
\label{edges1}
\end{figure}

\begin{proof}
By purely numerical consideration, there are seven possible edge combinations ($a,b,c,d,e$ are distinct)
\[
abcde,\;
a^2bcd,\;
a^2b^2c,\;
a^3bc,\;
a^3b^2,\;
a^4b,\;
a^5.
\]
Then we need to consider various ways the edges are arranged. Without loss of generality, the following are all the possible edge arrangements:
\begin{itemize}
\item $abcde$: first of Figure \ref{edges2}.
\item $a^2bcd$: second of Figure \ref{edges2} (two $a$ adjacent), third of Figure \ref{edges2} (two $a$ separated).
\item $a^2b^2c$: forth of Figure \ref{edges2} (two $a$ separated, two $b$ adjacent), fifth of Figure \ref{edges2} (two $a$ separated, two $b$ separated), first of Figure \ref{edges1} (two $a$ adjacent, two $b$ adjacent).  
\item $a^3bc$: sixth of Figure \ref{edges2} ($b,c$ separated), second of Figure \ref{edges1} ($b,c$ adjacent).
\item $a^3b^2$: seventh of Figure \ref{edges2} (two $b$ separated), third of Figure \ref{edges1} (two $b$ adjacent).
\item $a^4b$: fourth of Figure \ref{edges1}.
\item $a^5$: fifth of Figure \ref{edges1}.
\end{itemize}

\begin{figure}[htp]
\centering
\begin{tikzpicture}[>=latex,scale=1]

\foreach \a in {0,...,4}
\foreach \b in {0,...,6}
\draw[xshift=1.8*\b cm, rotate=72*\a]
	(18:0.8) -- (90:0.8);

\foreach \b in {0,...,3}
\draw[xshift=1.8*\b cm]
	(-54:0.8) -- node[fill=white,inner sep=1] {\small $x$} 
	(-54:1.5);
	
\foreach \a/\b in {126/0, 54/1, 126/1, 198/2, -18/2, 198/3, -18/3, 54/4, 198/4, 54/5, 126/5, -90/5, 54/6, 126/6, -90/6}
\node[xshift=1.8*\b cm, fill=white, inner sep=1] at (\a:0.65) {\small $a$};

\foreach \a/\b in {198/0, 198/1, 126/2, 54/3, 126/3, 126/4, -18/4, 198/5, -18/6, 198/6}
\node[xshift=1.8*\b cm, fill=white, inner sep=1] at (\a:0.65) {\small $b$};

\foreach \a/\b in {-90/0, -90/1, -90/2, -90/3, -90/4, -18/5}
\node[xshift=1.8*\b cm, fill=white, inner sep=1] at (\a:0.65) {\small $c$};

\foreach \a/\b in {-18/0, -18/1, 54/2}
\node[xshift=1.8*\b cm, fill=white, inner sep=1] at (\a:0.65) {\small $d$};

\node[fill=white, inner sep=1] at (54:0.65) {\small $e$};

\node[xshift=7.2cm] at (234:0.55) {\small $\theta$};

\node at (9,0.55) {\small $\theta$};
\node at (10.8,0.55) {\small $\theta$};

\end{tikzpicture}
\caption{Not suitable for tiling.}
\label{edges2}
\end{figure}

By Lemma \ref{base_tile}, we may assume the first four of Figure \ref{edges2} are special tiles. This implies one of the two ends of $c$ has degree $3$. Without loss of generality, we may assume the right end of $c$ has degree $3$. Let $x$ be the third edge at this vertex. In the first picture, we see $x,c$ are adjacent in a tile, and $x,d$ are adjacent in another tile. Since there is no edge in the first pentagon that is adjacent to both $c$ and $d$, we get a contradiction. Similar argument for the second, third and fourth pictures lead to the same contradiction. 

The fifth pentagon in Figure \ref{edges2} has no $a^2$-angle, no $b^2$-angle, and no $c^2$-angle. Therefore each degree $3$ vertex has at most one $a$, at most one $b$, and at most one $c$. This implies that each degree $3$ vertex has exactly one $a$, one $b$, and one $c$. Therefore each degree $3$ vertex has the $ac$-angle $\theta$. By Lemma \ref{deg3a}, this implies the $ac$-angle $\theta$ appears at least twice in the pentagon, a contradiction. 

The sixth pentagon in Figure \ref{edges2} has no $b^2$-angle, no $c^2$-angle, and no $bc$-angle. The seventh pentagon in Figure \ref{edges2} has no $b^2$-angle. By the similar argument as the fifth of Figure \ref{edges2}, this implies each degree $3$ vertex has at least two $a$, and therefore has the $a^2$-angle $\theta$. By Lemma \ref{deg3a}, this implies the $a^2$-angle $\theta$ appears at least twice in the pentagon, a contradiction. 
\end{proof}

\subsection{Anglewise Vertex Combination}
\label{avc}

In this paper, the pentagons have the edge combination $a^2b^2c$ or $a^3bc$. By Lemma \ref{edge_combo}, each combination has the unique edge arrangement. Then we denote the angles as in Figure \ref{pentagon}. 

We denote a vertex by $\alpha^a\beta^b\gamma^c\delta^d\epsilon^e$, to mean the vertex has $a$ copies of $\alpha$, $b$ copies of $\beta$, etc. The {\em angle sum of the vertex} is
\[
a\alpha+b\beta+c\gamma+d\delta+e\epsilon=2\pi.
\] 
We use $\alpha\beta^2\cdots$ to mean $a\ge 1$ and $b\ge 2$, and we use $R(\alpha\beta^2\cdots)$ to mean the {\em remainder}. The remainder is either the remaining angle combination
\[
R(\alpha\beta^2\cdots)
=\alpha^{a-1}\beta^{b-2}\gamma^c\delta^d\epsilon^e,
\]
or the remaining angle value
\[
R(\alpha\beta^2\cdots)
=2\pi-\alpha-2\beta
=(a-1)\alpha+(b-2)\beta+c\gamma+d\delta+e\epsilon.
\]

The {\em anglewise vertex combination}, abbreviated as AVC, is the collection of all vertices in a tiling. The following is an example in Section \ref{pent_ddivision}
\begin{equation}\label{avc13}
\text{AVC}=\{\alpha\beta^2,\beta^2\epsilon,\gamma^2\delta,\delta^3,\alpha^4,\epsilon^4\}.
\end{equation}
The AVC tells us $\beta\epsilon\cdots=\beta^2\epsilon$, $\alpha^2\cdots=\alpha^4$, $\gamma^2\cdots=\gamma^2\delta$, $\epsilon^2\cdots=\epsilon^4$, etc. We will use these properties in constructing the tiling.

We explain how the AVC is derived. More can be found in \cite{luk}. In the specific setting in Section \ref{pent_ddivision}, we know
\[
\alpha=\tfrac{1}{2}\pi,\;
\beta=(\tfrac{5}{6}-\tfrac{4}{f})\pi,\;
\gamma=\delta=\tfrac{2}{3}\pi,\;
\epsilon=(\tfrac{1}{3}+\tfrac{8}{f})\pi,\; 
f>24.
\]
If $\alpha^a\beta^b\gamma^c\delta^d\epsilon^e$ is a vertex, then the angle sum of the vertex is (we omit $\pi$ on both sides)
\[
\tfrac{1}{2}a+(\tfrac{5}{6}-\tfrac{4}{f})b+\tfrac{2}{3}(c+d)+(\tfrac{1}{3}+\tfrac{8}{f})e=2.
\]
This implies
\[
(3a+5b+4c+4d+2e-12)f=24(b-2e).
\]
By $f>24$, we have $\beta>\frac{2}{3}\pi$. We also have $\epsilon>\frac{1}{3}\pi$ and the specific values of $\alpha,\gamma,\delta$. This implies $a\le 4,b\le 2,c\le 3,d\le 3,e\le 5$. We substitute the finitely many combinations of the exponents satisfying these bounds into the equation above and solve for $f$. Those combinations yielding even $f>24$ are given in Table \ref{AVC32b}. In the table, ``$f=$ all'' means that the angle combinations can be vertices for any $f$. The vertices in the column ``not vertex'' are numerically possible, but are practically impossible by the edge length consideration. 

\begin{table}[htp]
\centering
\begin{tabular}{|c|c|c|}
\hline 
$f$ & vertex & not vertex \\
\hline \hline 
all
& $\beta^2\epsilon,\gamma^2\delta,\delta^3,\alpha^4$
& $\gamma\delta^2,\gamma^3$ \\
\hline 
$48$
& $\alpha\beta^2,\epsilon^4$  
& $\alpha^3\epsilon,\alpha^2\epsilon^2,\alpha\epsilon^3$ \\
\hline 
$72$
& $\delta\epsilon^3$  
& $\gamma\epsilon^3$ \\
\hline  
$96$ 
& & $\alpha\gamma\epsilon^2,\alpha\delta\epsilon^2$  \\
\hline 
$120$  
& $\epsilon^5$
& $\beta\epsilon^3$ \\
\hline
$192$   
& & $\alpha\epsilon^4$ \\
\hline 
\end{tabular}
\caption{AVC for Case 1.3, $H=\alpha^4$.}
\label{AVC32b}
\end{table}

For $f=48$, we combine those in the column ``vertex'' for all $f$ and for $f=48$. Then we get the AVC \eqref{avc13}. By the same idea, we get the AVCs for the other cases
\begin{align}
f=72 &\colon
\text{AVC}=\{\beta^2\epsilon,\gamma^2\delta,\delta^3,\alpha^4,\delta\epsilon^3\}, \nonumber \\
f=120 &\colon
\text{AVC}=\{\beta^2\epsilon,\gamma^2\delta,\delta^3,\alpha^4,\epsilon^5\}, \label{avc120} \\
f\ne 48, 72, 120 &\colon
\text{AVC}=\{\beta^2\epsilon,\gamma^2\delta,\delta^3,\alpha^4\}. \nonumber
\end{align}

The AVC is not used much in this paper. It will be used much more in the later papers, and we even need to introduce more sophistication in the notation for AVC.

\subsection{Adjacent Angle Deduction}
\label{aad}

In addition to angle combinations, we often need to know the {\em arrangement} of angles at a vertex. Figure \ref{anglecombo} shows four tiles around $\beta^4$ (for the edge combination $a^2b^2c$ in Figure \ref{pentagon}) and $\alpha\beta\gamma\delta$ (for the edge combination $a^3bc$ in Figure \ref{pentagon}). We indicate the arrangements of angles and edges by denoting the vertices as $\thin\beta\thin\beta\thin\beta\thin\beta\thin$ and $\dash\alpha\thick\beta\thin\delta\thin\gamma\dash$. The notation can be reversed, such as $\dash\alpha\thick\beta\thin\delta\thin\gamma\dash=\dash\gamma\thin\delta\thin\beta\thick\alpha\dash$. The notation can be rotated, such as $\dash\alpha\thick\beta\thin\delta\thin\gamma\dash=\thick\beta\thin\delta\thin\gamma\dash\alpha\thick=\thin\gamma\dash\alpha\thick\beta\thin\delta\thin$.

We also apply the notation to consecutive angles at a vertex. For example, we also denote the two vertices in Figure \ref{anglecombo} as $\beta\thin\beta\cdots$, $\thin\beta\thin\beta\thin\cdots,\alpha\thick\beta\cdots$, $\thick\beta\thin\delta\thin\gamma\dash\cdots$, and denote the consecutive angle segments as $\beta\thin\beta,\thin\beta\thin\beta\thin,\alpha\thick\beta,\thick\beta\thin\delta\thin\gamma\dash$. The notation can be reversed, such as $\alpha\thick\beta=\beta\thick\alpha$ and $\thick\beta\thin\delta\thin\gamma\dash=\dash\gamma\thin\delta\thin\beta\thick$, but not rotated unless it is the whole vertex, such as $\beta\thin\delta\thin\gamma\ne \delta\thin\gamma\dash\beta$.

\begin{figure}[htp]
\centering
\begin{tikzpicture}[>=latex,scale=1]


\foreach \a in {1,-1}
\foreach \b in {1,-1}
{
\begin{scope}[yscale=\a,xscale=\b]

\draw
	(0.8,0) -- (0,0) -- (0,0.8);

\draw[line width=1.5]
	(0,0.8) -- (0.6,1.2) -- (1.2,0.6);

\draw[dashed]
	(0.8,0) -- (1.2,0.6);

\node at (0.7,0.2) {\small $\delta$}; 
\node at (1,0.6) {\small $\epsilon$};
\node at (0.6,0.95) {\small $\gamma$};
\node at (0.2,0.7) {\small $\alpha$};
\node at (0.2,-0.2) {\small $\beta$};

\end{scope}
}


\foreach \a in {0,1,2,3}
{
\begin{scope}[xshift=3cm,rotate=90*\a]

\draw
	(0,0) -- (0.8,0);

\draw[line width=1.5]
	(1.2,0.6) -- (0.6,1.2) -- (0,0.8);

\draw[dashed]
	(0.8,0) -- (1.2,0.6);

\node at (0.7,0.2) {\small $\delta$};
\node at (1,0.6) {\small $\epsilon$};
\node at (0.6,0.95) {\small $\gamma$};
\node at (0.2,0.7) {\small $\alpha$};
\node at (0.2,0.2) {\small $\beta$};

\end{scope}
}


\foreach \a in {2,3}
{
\begin{scope}[xshift=3*\a cm]

\draw
	(-1.2,0.6) -- (-0.6,1.2) -- (0,0.8) -- (0,0) -- (0.8,0) -- (1.2,-0.6) -- (0.6,-1.2)
	(-0.8,0) -- (-1.2,-0.6) -- (-0.6,-1.2) -- (0,-0.8);

\draw[line width=1.5]
	(0,0) -- (-0.8,0)
	(0,-0.8) -- (0.6,-1.2);

\draw[dashed]
	(-0.8,0) -- (-1.2,0.6)
	(0,0) -- (0,-0.8);

\node at (-0.7,0.2) {\small $\alpha$}; 
\node at (-1,0.6) {\small $\gamma$};
\node at (-0.6,1) {\small $\epsilon$};
\node at (-0.2,0.7) {\small $\delta$};
\node at (-0.2,0.2) {\small $\beta$};

\node at (-0.7,-0.25) {\small $\beta$}; 
\node at (-1,-0.55) {\small $\delta$};
\node at (-0.6,-1) {\small $\epsilon$};
\node at (-0.2,-0.7) {\small $\gamma$};
\node at (-0.2,-0.2) {\small $\alpha$};

\node at (0.7,-0.2) {\small $\epsilon$}; 
\node at (1,-0.6) {\small $\delta$};
\node at (0.6,-0.95) {\small $\beta$};
\node at (0.2,-0.7) {\small $\alpha$};
\node at (0.2,-0.2) {\small $\gamma$};
	
\end{scope}
}


\begin{scope}[xshift=6cm]

\draw
	(0,0.8) -- (0.6,1.2);

\draw[line width=1.5]
	(0.8,0) -- (1.2,0.6);

\draw[dashed]
	(0.6,1.2) -- (1.2,0.6);

\node at (0.95,0.6) {\small $\alpha$};
\node at (0.7,0.2) {\small $\beta$};
\node at (0.6,0.95) {\small $\gamma$};
\node at (0.2,0.7) {\small $\epsilon$};
\node at (0.2,0.2) {\small $\delta$};

\end{scope}


\begin{scope}[xshift=9cm]

\draw
	(0.8,0) -- (1.2,0.6);

\draw[line width=1.5]
	(0,0.8) -- (0.6,1.2);

\draw[dashed]
	(0.6,1.2) -- (1.2,0.6);

\node at (1,0.55) {\small $\gamma$};
\node at (0.7,0.2) {\small $\epsilon$};
\node at (0.6,0.95) {\small $\alpha$};
\node at (0.2,0.7) {\small $\beta$};
\node at (0.2,0.2) {\small $\delta$};

\end{scope}

\end{tikzpicture}
\caption{Adjacent angle deduction.}
\label{anglecombo}
\end{figure}

The first and second of Figure \ref{anglecombo} have the same vertex $\thin\beta\thin\beta\thin\beta\thin\beta\thin$, but different arrangements of the four tiles. To indicate the difference, we introduce {\em adjacent angle deduction}, abbreviated as AAD. We write ${}^{\lambda}\theta^{\mu}$ to mean $\lambda,\mu$ are the two angles adjacent to $\theta$ in the pentagon. The first of Figure \ref{anglecombo} has the AAD $\thin^{\alpha}\beta^{\delta}\thin^{\delta}\beta^{\alpha}\thin^{\alpha}\beta^{\delta}\thin^{\delta}\beta^{\alpha}\thin$ (or equivalently, $\thin^{\delta}\beta^{\alpha}\thin^{\alpha}\beta^{\delta}\thin^{\delta}\beta^{\alpha}\thin^{\alpha}\beta^{\delta}\thin$). The second has the AAD $\thin^{\alpha}\beta^{\delta}\thin^{\alpha}\beta^{\delta}\thin^{\alpha}\beta^{\delta}\thin^{\alpha}\beta^{\delta}\thin$. The third and fourth have the respective AADs $\dash^{\gamma}\alpha^{\beta}\thick^{\alpha}\beta^{\delta}\thin^{\epsilon}\delta^{\beta}\thin^{\epsilon}\gamma^{\alpha}\dash$ and $\dash^{\gamma}\alpha^{\beta}\thick^{\alpha}\beta^{\delta}\thin^{\beta}\delta^{\epsilon}\thin^{\epsilon}\gamma^{\alpha}\dash$. 

The use of AAD naturally extends to consecutive angles. For example, we have $\beta^{\alpha}\thin^{\alpha}\beta,\thin^{\delta}\beta^{\alpha}\thin^{\alpha}\beta^{\delta}\thin,\thin^{\alpha}\beta^{\delta}\thin^{\delta}\beta^{\alpha}\thin$ in the first of Figure \ref{anglecombo}, and we have $\beta^{\alpha}\thin^{\delta}\beta$ in the second. We can be very flexible in using the AAD notation. For example, for the second pentagon in Figure \ref{pentagon}, $\thick^{\alpha}\beta^{\delta}\thin\delta\thin$ means $\thick^{\alpha}\beta^{\delta}\thin^{\epsilon}\delta^{\beta}\thin$ or $\thick^{\alpha}\beta^{\delta}\thin^{\beta}\delta^{\epsilon}\thin$. The AAD $\thick^{\alpha}\beta^{\delta}\thin\delta\thin$ appears in both the third and fourth of Figure \ref{anglecombo}.

The AAD can be reversed. For example, $\thick^{\alpha}\beta^{\delta}\thin^{\epsilon}\delta^{\beta}\thin^{\epsilon}\gamma^{\alpha}\dash$ is the same as $\dash^{\alpha}\gamma^{\epsilon}\thin^{\beta}\delta^{\epsilon}\thin^{\delta}\beta^{\alpha}\thick$. The AAD cannot be rotated unless it is the whole vertex. For example, the third and fourth of Figure \ref{anglecombo} have the AAD $\dash^{\gamma}\alpha^{\beta}\thick^{\alpha}\beta^{\delta}\thin\delta\thin^{\epsilon}\gamma^{\alpha}\dash$, which is the same as $\thin\delta\thin^{\epsilon}\gamma^{\alpha}\dash^{\gamma}\alpha^{\beta}\thick^{\alpha}\beta^{\delta}\thin$.

We use AAD as substitution for drawing some simple pictures. When we use AAD to make an argument, we actually have a picture in mind. For example, we can see  that the first of Figure \ref{anglecombo} has a vertex $\alpha\thin\alpha\cdots$ just off the central vertex $\beta^4$. We also see the same vertex $\alpha\thin\alpha\cdots$ from the corresponding AAD $\thin^{\delta}\beta^{\alpha}\thin^{\alpha}\beta^{\delta}\thin$. In general, ${\theta}^{\lambda}\thin^{\mu}\rho$ in an AAD implies a vertex $\lambda\thin\mu\cdots$ ($\thin$ can be replaced by $\thick$ or $\dash$ in this argument). In this process, we ``deduce'' a vertex  $\lambda\thin\mu\cdots$ from the AAD ${\theta}^{\lambda}\thin^{\mu}\rho$, which is actually a description of ``adjacent angles''. This is the reason we call adjacent angle deduction.

The AAD has the following {\em reciprocity} property: An AAD $\lambda^{\theta}\thin^{\rho}\mu$ at $\lambda\mu\cdots$ implies an AAD $\theta^{\lambda}\thin^{\mu}\rho$ at $\theta\rho\cdots$. 

Here is a typical AAD argument. For the first pentagon in Figure \ref{pentagon}, $\thin\beta\thin\beta\thin$ has three possible AADs (up to reversion) $\thin^{\alpha}\beta^{\delta}\thin^{\alpha}\beta^{\delta}\thin$, $\thin^{\alpha}\beta^{\delta}\thin^{\delta}\beta^{\alpha}\thin$, $\thin^{\delta}\beta^{\alpha}\thin^{\alpha}\beta^{\delta}\thin$. If we know $\alpha\thin\alpha\cdots$ and $\delta\thin\delta\cdots$ are not vertices, then the AAD of $\thin\beta\thin\beta\thin$ is  $\thin^{\alpha}\beta^{\delta}\thin^{\alpha}\beta^{\delta}\thin$. In the actual proof, we say the following: By no $\alpha\thin\alpha\cdots$ and no $\delta\thin\delta\cdots$, we know $\thin\beta\thin\beta\thin$ has the {\em unique} AAD $\thin^{\alpha}\beta^{\delta}\thin^{\alpha}\beta^{\delta}\thin$. We also write the conclusion as $\thin\beta\thin\beta\thin=\thin^{\alpha}\beta^{\delta}\thin^{\alpha}\beta^{\delta}\thin$.

As a matter of fact, at a vertex $\beta^n$, no $\delta\thin\delta\cdots$ already implies the unique AAD $\thin^{\alpha}\beta^{\delta}\thin^{\alpha}\beta^{\delta}\thin^{\alpha}\beta^{\delta}\thin\cdots$ of the vertex. We consider one $\beta$ at $\beta^n=\thin\beta\thin\beta\thin\beta\thin\cdots$, and the adjacent $\beta$ on the $\delta$-side of the first $\beta$. This means the AAD $\thin^{\alpha}\beta^{\delta}\thin\beta\thin=\thin^{\alpha}\beta^{\delta}\thin^{\alpha}\beta^{\delta}\thin$ or $\thin^{\alpha}\beta^{\delta}\thin^{\delta}\beta^{\alpha}\thin$. By no $\delta\thin\delta\cdots$, we cannot have $\thin^{\alpha}\beta^{\delta}\thin^{\delta}\beta^{\alpha}\thin$. Therefore $\thin^{\alpha}\beta^{\delta}\thin\beta\thin=\thin^{\alpha}\beta^{\delta}\thin^{\alpha}\beta^{\delta}\thin$. Then we apply the equality to the second and third $\beta$ in $\thin^{\alpha}\beta^{\delta}\thin^{\alpha}\beta^{\delta}\thin\beta\thin$ to get $\thin^{\alpha}\beta^{\delta}\thin^{\alpha}\beta^{\delta}\thin^{\alpha}\beta^{\delta}\thin$. We continue the argument and get the unique AAD of $\beta^n$. 

If we dig further, we find that the number of $\beta^{\alpha}\thin^{\alpha}\beta$ equals the number of $\beta^{\delta}\thin^{\delta}\beta$ at $\beta^n$. This implies that, if $n$ is odd, then we must have $\beta^{\alpha}\thin^{\delta}\beta$ at $\beta^n$. The argument can be summarised into the following result. We note that $\thin$ can be replaced by $\thick$ in the lemma.

\begin{lemma}\label{aadlemma}
Suppose $\lambda$ and $\mu$ are the two angles adjacent to $\theta$ in a pentagon.
\begin{itemize}
\item If $\lambda\thin\lambda\cdots$ is not a vertex, then $\theta^n$ has the unique AAD $\thin^{\lambda}\theta^{\mu}\thin^{\lambda}\theta^{\mu}\thin^{\lambda}\theta^{\mu}\thin\cdots$.
\item If $n$ is odd, then we have the AAD $\thin^{\lambda}\theta^{\mu}\thin^{\lambda}\theta^{\mu}\thin$ at $\theta^n$.
\end{itemize}
\end{lemma}

The first statement means that, if the AAD of $\theta^n$ gives $\lambda\thin\lambda\cdots$, then the AAD also gives $\mu\thin\mu\cdots$. The second statement means that, if $\theta^n$ is a vertex and $n$ is odd, then the AAD of $\theta^n$ gives a vertex $\lambda\thin\mu\cdots$.

\section{Subdivision Tiling}
\label{div}

\subsection{Pentagonal Subdivision}
\label{pdiv}

Given any tiling of an oriented closed surface, we add two vertices to each edge. If $E$ is an edge of a tile $T$, then we can use the orientation to label the two vertices on $E$ as the first and the second, with respect to the tile $T$ and the orientation. See the labels for the five edges of a pentagonal tile on the left of Figure \ref{subdivision}. Note that each edge is shared by two tiles, and the labels from the viewpoints of two tiles are different.

\begin{figure}[htp]
\centering
\begin{tikzpicture}[>=latex]

\foreach \a in {0,1}
\draw[xshift=6*\a cm]
	(-1.8,-1.8) rectangle (0,0)
	(0,0) -- (1.8,-0.9) -- (0,-1.8)
	(0,0) -- (-0.9,1.8) -- (-1.8,0)
	(1.8,-0.9) -- (1.8,0.6) -- (0.6,1.8) -- (-0.9,1.8)
	;

	
\draw[->]
	(-1.1,-0.9) arc (-180:90:0.2);
\draw[->]
	(-1.1,0.6) arc (-180:90:0.2);
\draw[->]
	(0.4,-0.9) arc (-180:90:0.2);
\draw[->]
	(0.4,0.6) arc (-180:90:0.2);
	
\fill
	(0.6,-0.3) circle (0.05)
	(1.2,-0.6) circle (0.05)
	(1.8,-0.4) circle (0.05)
	(1.8,0.1) circle (0.05)
	(1.4,1) circle (0.05)
	(1,1.4) circle (0.05)
	(0.1,1.8) circle (0.05)
	(-0.4,1.8) circle (0.05)
	(-0.6,1.2) circle (0.05)
	(-0.3,0.6) circle (0.05);

\node at (0.7,-0.15) {\scriptsize 1};
\node at (1.3,-0.45) {\scriptsize 2};
\node at (-0.45,1.3) {\scriptsize 1};
\node at (-0.15,0.7) {\scriptsize 2};
\node at (1.3,0.85) {\scriptsize 1};
\node at (0.9,1.25) {\scriptsize 2};
\node at (0.1,1.6) {\scriptsize 1};
\node at (-0.4,1.6) {\scriptsize 2};
\node at (1.65,-0.4) {\scriptsize 1};
\node at (1.65,0.1) {\scriptsize 2};

\node at (-0.45,0.5) {\scriptsize 1};
\node at (-0.75,1.1) {\scriptsize 2};

\node at (1.1,-0.75) {\scriptsize 1};
\node at (0.5,-0.45) {\scriptsize 2};
			
\draw[very thick, ->]
	(2.5,0) -- ++(1.2,0);


\draw[xshift=6cm]
	(-1.8,-0.6) -- (0,-1.2)
	(-0.6,0) -- (-1.2,-1.8)
	(0.6,-0.9) -- (0,-0.6)
	(0.6,-0.9) -- (0.6,-1.5)
	(0.6,-0.9) -- (1.2,-0.6)
	(-0.9,0.6) -- (-1.2,0)
	(-0.9,0.6) -- (-0.3,0.6)
	(-0.9,0.6) -- (-1.2,1.2)
	(0.6,0.6) -- (0.6,-0.3)
	(0.6,0.6) -- (-0.6,1.2)
	(0.6,0.6) -- (1.4,1)
	(0.6,0.6) -- (1.8,-0.4)
	(0.6,0.6) -- (0.1,1.8)
	;

\end{tikzpicture}
\caption{Pentagonal subdivision.}
\label{subdivision}
\end{figure}

We further add one vertex at the center of each tile, and connect the center vertex to the first vertex of each boundary edge of the tile. This divides an $m$-gon tile into $m$ pentagons. See the right of Figure \ref{subdivision}. The process turns any tiling into a pentagonal tiling. We call the process {\em pentagonal subdivision}.

Each tile in the pentagonal subdivision has one vertex from the original tiling, three newly added edge vertices, and one center vertex. The edge vertices have degree $3$. The degree of the original vertex remains the same. The degree of the center vertex is the ``degree'' (number of edges) of the original tile. It is easy to see that the pentagonal subdivision of a tiling and the pentagonal subdivision of the dual tiling are combinatorially the same, with original vertices and center vertices exchanged.

Suppose we start with a regular tiling of the sphere (i.e., Platonic solid), such that the distance from the new edge vertices to the nearby original vertices are the same. Then the pentagonal subdivision is a tiling by congruent pentagons. Figure \ref{cube_oct} gives the pentagonal subdivisions of the cube and octahedron.

\begin{figure}[htp]
\centering
\begin{tikzpicture}[>=latex,scale=1]


\draw[dashed]
	(0,0) -- (0,2.4) -- (-2,3.2) -- (-2,0.8) -- (0,0) -- (1.6,0.8) -- (1.6,3.2) -- (0,2.4)
	(-2,3.2) -- (-0.4,4) -- (1.6,3.2);

\draw
	(-2,2.6) -- (0,0.6)
	(1.6,1.4) -- (0,1.8)
	(-0.5,2.6) -- (-1.5,0.6)
	(-1.5,3) -- (1.2,3.4)
	(-0.8,3.8) -- (0.4,2.6)
	(1.2,3) -- (0.4,0.2)
	;

\draw[line width=1.5]
	(0,0) -- (0.4,0.2)
	(0,0) -- (-0.5,0.2)
	(0,0) -- (0,0.6)
	(0,2.4) -- (-0.5,2.6)
	(0,2.4) -- (0,1.8)
	(0,2.4) -- (0.4,2.6)
	(-2,0.8) -- (-1.5,0.6)
	(-2,0.8) -- (-2,1.4)
	(-2,3.2) -- (-1.5,3)
	(-2,3.2) -- (-1.6,3.4)
	(-2,3.2) -- (-2,2.6)
	(-0.4,4) -- (-0.8,3.8)
	(-0.4,4) -- (0.1,3.8)
	(1.6,3.2) -- (1.1,3.4)
	(1.6,3.2) -- (1.2,3)
	(1.6,3.2) -- (1.6,2.6)
	(1.6,0.8) -- (1.6,1.4)
	(1.6,0.8) -- (1.2,0.6);
	

\begin{scope}[shift={(5cm,2cm)}]

\draw[dashed]
	(0,2) -- (2,0) -- (0,-2) -- (-2,0) -- cycle
	(0,2) -- (0.4,-0.4) -- (0,-2)
	(2,0) -- (0.4,-0.4) -- (-2,0);
		
\draw[line width=1.5]
	(2,0) -- (1.5,0.5)
	(2,0) -- (1.6,-0.1)
	(2,0) -- (1.5,-0.5)
	(0,2) -- (-0.5,1.5)
	(0,2) -- (0.1,1.4)
	(0,2) -- (0.5,1.5)
	(-2,0) -- (-1.5,0.5)
	(-2,0) -- (-1.4,-0.1)
	(-2,0) -- (-1.5,-0.5)
	(0,-2) -- (-0.5,-1.5)
	(0,-2) -- (0.1,-1.6)
	(0,-2) -- (0.5,-1.5)
	(0.4,-0.4) -- (0.3,0.2)
	(0.4,-0.4) -- (0.8,-0.3)
	(0.4,-0.4) -- (-0.2,-0.3)
	(0.4,-0.4) -- (0.3,-0.8)
	;

\draw
	(0.8,0.5) -- (0.1,1.4)
	(0.8,0.5) -- (0.8,-0.3)
	(0.8,0.5) -- (1.5,0.5)
	(-0.5,0.5) -- (0.3,0.2)
	(-0.5,0.5) -- (-0.5,1.5)
	(-0.5,0.5) -- (-1.4,-0.1)
	(-0.5,-0.8) -- (-0.2,-0.3)
	(-0.5,-0.8) -- (-1.5,-0.5)
	(-0.5,-0.8) -- (0.1,-1.6)
	(0.8,-0.8) -- (0.3,-0.8)
	(0.8,-0.8) -- (1.6,-0.1)
	(0.8,-0.8) -- (0.5,-1.5)
	;

\end{scope}

\end{tikzpicture}
\caption{Pentagonal subdivisions of the cube and octahedron.}
\label{cube_oct}
\end{figure}

The first of Figure \ref{platonic_subdivision} shows the pentagonal subdivision of one (regular) triangular face of the (regular) tetrahedron, octahedron, or icosahedron, with $\beta=\frac{2}{3}\pi$ and $\gamma=\frac{2}{n}\pi$ ($n=3,4,5$ respectively, for tetrahedron, octahedron, icosahedron). We also have $\beta=\frac{2}{m}\pi$ and $\gamma=\frac{2}{3}\pi$ for the cube ($m=4$) and dodecahedron ($m=5$). The second of Figure \ref{platonic_subdivision} is the pentagon in the pentagonal subdivision tiling, and has the edge combination $a^2b^2c$.

We remark that we no longer require the original (dotted) edges to be straight. This means that we only require $\alpha+\delta+\epsilon=2\pi$ instead of $\alpha+\delta=\epsilon=\pi$. This allows the pentagonal subdivision of a Platonic solid and its dual to give the same tiling. For example, the two pentagonal subdivision tilings in Figure \ref{cube_oct} are obtained from each other by exchanging the normal $a$-edges and the thick $b$-edges. Since dual tilings give the same pentagonal subdivision, we get three tilings of the sphere by congruent pentagons:
\begin{enumerate}
\item Pentagonal subdivision of the tetrahedron: $f=12$. 
\item Pentagonal subdivision of the octahedron (or cube): $f=24$. 
\item Pentagonal subdivision of the icosahedron (or dodecahedron): $f=60$.
\end{enumerate}
The first tiling is the deformed dodecahedron $T_5$ in \cite{gsy}, and is the first of Figure \ref{subdivision_tiling}. Further by \cite{ay1}, this is the only edge-to-edge tiling of the sphere by $12$ congruent pentagons. The second tiling is the second of Figure \ref{subdivision_tiling}, and Figure \ref{cube_oct}. The third tiling is the third of Figure \ref{subdivision_tiling}.

\begin{figure}[htp]
\centering
\begin{tikzpicture}[>=latex]


\begin{scope}[shift={(-4cm,-0.3cm)}]

\foreach \a in {0,1,2}
{
\begin{scope}[rotate=120*\a]

\draw[dotted]
	(-30:2) -- (90:2);

\draw[line width=1.5]
	(90:2) -- ++(-40:1.2)
	(90:2) -- ++(-100:1.2);

\draw
	(0,0) -- (104:0.86);
	
\draw[dashed]
	(54:1.53) -- (-16:0.86);

\fill
	 (90:2) circle (0.1);

\end{scope}
}

\filldraw[fill=white]
	 (0,0) circle (0.1);

\node at (3:0.65) {\small $\delta$};

\node at (-30:0.85) {\small $\alpha$};
\node at (-70:0.3) {\small $\beta$};
\node at (-35:1.6) {\small $\gamma$};
\node at (-120:0.6) {\small $\delta$};
\node at (-63:1.35) {\small $\epsilon$};

\node at (-8:1) {\small $\epsilon$};
\node at (-30:0.85) {\small $\alpha$};

\node at (50:0.3) {\small $\beta$};
\node at (170:0.25) {\small $\beta$};


\node at (90:2.3) {\small $C'$};
\node at (210:2.4) {\small $C''$};
\node at (-30:2.3) {\small $C$};
 	
\end{scope}


\draw
	(90:1.2) -- node[fill=white,inner sep=1] {\small $a$} 
	(162:1.2) -- node[fill=white,inner sep=1] {\small $a$}
	(234:1.2);
\draw[line width=1.5]
	(90:1.2) -- node[fill=white,inner sep=1] {\small $b$}
	(18:1.2) -- node[fill=white,inner sep=1] {\small $b$}
	(-54:1.2);
\draw[dashed]
	(234:1.2) -- node[fill=white,inner sep=1] {\small $c$}
	(-54:1.2);

\node at (90:0.9) {\small $\alpha$};

\draw[shift={(162:1.2)}]
	(36:0.2) arc (36:-72:0.2);
\node at (162:0.8) {\small $\beta$};

\fill[shift={(18:1.2)}]
	(144:0.2) arc (144:252:0.2) -- (0,0);
\node at (18:0.8) {\small $\gamma$};

\node at (234:0.9) {\small $\delta$};
\node at (-54:0.9) {\small $\epsilon$};

\node at (90:1.4) {\small $A$};
\node at (162:1.4) {\small $B$};
\node at (18:1.4) {\small $C$};
\node at (234:1.4) {\small $D$};
\node at (-54:1.4) {\small $E$};


\begin{scope}[shift={(4cm,-0.3cm)}]

\foreach \a in {0,1,2}
{
\begin{scope}[rotate=120*\a]

\draw[dotted]
	(-30:2) -- (90:2);

\draw[line width=1.5]
	(90:2) -- ++(-40:1.2)
	(90:2) -- ++(-100:1.2);

\draw
	(0,0) -- (54:1.53);
	
\draw[dashed]
	(54:1.53) -- (-16:0.86);

\fill
	 (90:2) circle (0.1);
	 
\end{scope}
}

\filldraw[fill=white]
	 (0,0) circle (0.1);
	 	
\node at (183:1.3) {\small $\alpha'$};
\node at (235:0.35) {\small $\beta'$};
\node at (203:1.55) {\small $\gamma'$};
\node at (-77:0.9) {\small $\delta'$};
\node at (210:0.7) {\small $\epsilon'$};
	 	
\end{scope}

\end{tikzpicture}
\caption{Pentagonal subdivision of Platonic solid, and the companion.}
\label{platonic_subdivision}
\end{figure}

In the second of Figure \ref{platonic_subdivision}, we denote the vertices of the pentagon by $A,B,C,D,E$, to match the notations of the five angles. Then $B$ is the center of a triangular face in the first of Figure \ref{platonic_subdivision}. We denote by $C'$ and $C''$ the rotations of $C$ around $B$ by $\frac{2}{3}\pi$ and $\frac{4}{3}\pi$. Then $C,C',C''$ are the vertices of a triangular face. 

The pentagon is determined by the location of $A$. By rotating $A$ around $B$ by $-\beta=-\frac{2}{3}\pi$, we get $D$. By rotating $A$ around $C$ by $\gamma=\frac{2}{n}\pi$, we get $E$. Then $DE$ and $CC''$ intersect at the midpoints of both arcs. This gives a one-to-one correspondence between the pentagonal subdivision tiling and the location of $A$, subject to the only condition that the pentagon $\pentagon ABCDE$ is simple. In particular, the pentagonal subdivision tiling allows two free parameters. We study the $2$-dimensional moduli of pentagonal subdivision tilings in \cite{wy3}.

Finally, if we change the orientation of the surface, then the labels of the first and second points in Figure \ref{subdivision} are exchanged. In other words, we get another pentagonal subdivision tiling by connecting the centers of tiles to the second boundary vertices according to the original label. We call the new pentagonal subdivision tiling the {\em companion} of the original one. When the companion operation is applied to the pentagonal subdivision of Platonic solids, we change the first of Figure \ref{platonic_subdivision} to the third of Figure \ref{platonic_subdivision}. The angles of the two tilings are related by
\[
\beta'=\beta,\;
\gamma'=\gamma,\;
\alpha'+\delta'=\epsilon,\;
\alpha+\delta=\epsilon',\;
\epsilon+\epsilon'=2\pi.
\]
The orientation $\alpha'\to \beta'\to \delta'\to \epsilon'\to \gamma'$ is opposite to the orientation $\alpha\to \beta\to \delta\to \epsilon\to \gamma$. To compare the pentagonal subdivision tiling and its companion, therefore, we need to flip the whole companion tiling. 

The companion construction may turn a simple pentagon into a non-simple one. Therefore only some pentagonal subdivision tilings have the companion. The exact subset of the moduli where the companion exists is given in \cite{wy3}.

\subsection{Double Pentagonal Subdivision}
\label{dpdiv}

Given any tiling of a closed surface, we have the dual tiling. Combining the tiling and its dual, we get a quadrilateral tiling. If the surface is also oriented, then we may use the orientation to further cut each quadrilateral into two halves in a compatible way. The process turns any tiling into a pentagonal tiling. We call the process {\em double pentagonal subdivision}.

\begin{figure}[htp]
\centering
\begin{tikzpicture}[>=latex]

\foreach \a in {0,1}
\draw[xshift=6*\a cm]
	(-1.2,-1.2) rectangle (1.2,1.2)
	(1.2,1.2) -- (3.2,0) -- (1.2,-1.2)
	;

	
\draw[->]
	(-0.2,0) arc (-180:90:0.2);
\draw[->]
	(1.8,0) arc (-180:90:0.2);

\draw[very thick, ->]
	(3.5,0) -- ++(1,0);


\begin{scope}[xshift=6cm]

\foreach \b in {0,...,3}
\draw[rotate=90*\b]
	(0,0) -- (1.2,0)
	(0.6,0) -- (0.6,1.2) 
	;

\draw
	(-0.6,1.2) -- ++ (0,0.2)
	(0.6,-1.2) -- ++ (0,-0.2)
	(-1.2,-0.6) -- ++ (-0.2,0)
	(1.2,0) -- (1.8,0)
	(2.2, 0.6) -- (1.8,0) -- (2.2, -0.6) 
	(1.7,0.9) -- ++ (0.1,0.2)
	(2.7,-0.3) -- ++ (0.1,-0.2)
	(1.7,-0.9) -- (1.5,0)
	(1.2,0.6) -- (2,0.3)
	(2,-0.3) -- (2.7,0.3)
	;

\end{scope}

\end{tikzpicture}
\caption{Double pentagonal subdivision.}
\label{double_subdivision}
\end{figure}

Similar to pentagonal subdivision, we hope that the double pentagonal subdivision of a Platonic solid can be a tiling of the sphere by congruent pentagons. Since the dual tiling gives the same double pentagonal subdivision, we only need to consider the double pentagonal subdivision of one triangular face of the regular tetrahedron ($n=3$), octahedron ($n=4$), icosahedron ($n=5$). This is given by the first of Figure \ref{double_tiling}. By the angle sums of the vertices, we get the angles of the pentagonal tile in the second of Figure \ref{double_tiling}
\[
\alpha=\tfrac{1}{2}\pi,\;
\beta=\left(1-\tfrac{1}{n}\right)\pi,\;
\gamma=\delta=\tfrac{2}{3}\pi,\;
\epsilon=\tfrac{2}{n}\pi.
\]
The number of tiles are respectively $f=24,48,120$.

\begin{figure}[htp]
\centering
\begin{tikzpicture}[>=latex]

\foreach \a in {0,1,2}
{
\begin{scope}[rotate=120*\a, scale=1.2]

\draw[dashed]
	(1.3,-0.25) -- (0.433,1.25);
	
\draw
	(0,0) -- (30:0.5) -- (-0.433,1.25) -- (90:2) -- (0.433,1.25)
	(0.433,1.25) -- ++(30:0.4)
	(80:2.2) -- (90:2) -- (100:2.2);

\draw[line width=1.5]
	(30:0.5) -- (30:1.3);

\fill
	 (90:2) circle (0.1);

\filldraw[fill=white]
	 (0,0) circle (0.1);
	 	 	 
\node at (140:0.85) {\small $\alpha$};
\node at (130:0.5) {\small $\beta$};
\node at (112:1.05) {\small $\gamma$};
\node at (90:0.22) {\small $\delta$};
\node at (48:0.37) {\small $\epsilon$};

\node at (40:0.87) {\small $\alpha$};
\node at (45:0.65) {\small $\beta$};
\node at (75:1.2) {\small $\gamma$};
\node at (101:1.25) {\small $\delta$};
\node at (90:1.75) {\small $\epsilon$};

\node at (38:1.15) {\small $\alpha$};
\node at (22:1.15) {\small $\alpha$};
\node at (62:1.35) {\small $\gamma$};
\node at (112:1.45) {\small $\gamma$};
\node at (71:1.52) {\small $\delta$};
\node at (83:1.95) {\small $\epsilon$};
\node at (97:1.95) {\small $\epsilon$};

\end{scope}
}

\filldraw[fill=gray]
	 (0,-1.2) circle (0.1);

\begin{scope}[shift={(5cm,0.3cm)}]
	
\draw
	(162:1.2) -- node[fill=white,inner sep=1] {\small $a$}
	(234:1.2) -- node[fill=white,inner sep=1] {\small $a$}
	(-54:1.2) -- node[fill=white,inner sep=1] {\small $a$}
	(18:1.2);
\draw[line width=1.5]
	(90:1.2) -- node[fill=white,inner sep=1] {\small $b$} 
	(162:1.2);
\draw[dashed]
	(90:1.2) -- node[fill=white,inner sep=1] {\small $c$}
	(18:1.2);

\node at (90:0.9) {\small $\alpha$};
\node at (162:0.9) {\small $\beta$};
\node at (18:0.9) {\small $\gamma$};
\node at (234:0.9) {\small $\delta$};
\node at (-54:0.9) {\small $\epsilon$};

\end{scope}

\end{tikzpicture}
\caption{Double pentagonal subdivision of Platonic solid.}
\label{double_tiling}
\end{figure}

Unlike pentagonal subdivision, all angles of the pentagon in a double pentagonal subdivision tiling are fixed. Note that a general pentagon has $7$ degrees of freedom. On the other hand, the specific values of $5$ angles and $3$ equal edges amount to $7$ equations. Therefore we expect the double pentagonal subdivision tilings to be isolated examples. 

In what follows, we calculate the exact values for the pentagon, and justifies the existence of the pentagon.

\subsubsection*{Calculate the Pentagon}

For a triangular face, we form the right triangle in the first of Figure \ref{pexist1} by connecting the center point $\circ$ of the face, the middle point ${\color{gray} \bullet}$ of an edge, and a vertex $\bullet$ of the face. The right triangle is one sixth of the triangular face, and has the respective angles $\frac{1}{3}\pi$, $\frac{1}{2}\pi$, $\frac{1}{n}\pi$ at $\circ$, ${\color{gray} \bullet}$, $\bullet$. The edges $x,y,z$ of the right triangle are $<\frac{1}{2}\pi$ and the exact values are given below.
\renewcommand{\arraystretch}{1.3}
\begin{center}
 \begin{tabular}{|c| c|| c c c|} 
 \hline
 $f$ & $n$ & $\cos x$ & $\cos y$ & $\cos z$ \\   
 \hline\hline
 12 & 3 
 & $\frac{1}{3}$ 
 & $\frac{1}{\sqrt{3}}$
 & $\frac{1}{\sqrt{3}}$ \\ 
 \hline
 24 & 4 
 & $\frac{1}{\sqrt{3}}$
 & $\frac{\sqrt{2}}{\sqrt{3}}$ 
 & $\frac{1}{\sqrt{2}}$ \\
 \hline
 60 & 5  
 & $\frac{\sqrt{5}+1}{\sqrt{6(5-\sqrt{5})}}$
 & $\frac{\sqrt{5}+1}{2\sqrt{3}}$
 & $\frac{\sqrt{2}}{\sqrt{5-\sqrt{5}}}$ \\ 
 \hline
\end{tabular}
\end{center}

\begin{figure}[htp]
\centering
\begin{tikzpicture}[>=latex,scale=1]


\draw[gray!50]
	(0,0) -- (0,2.6) -- (4,0) -- (0,0);
	
\draw
	(0,2.6) -- (1,1.2) -- (2.8,1.5) -- (4,0) -- (1.7,-0.7);

\draw[line width=1.5]
	(0,0) -- (1,1.2);

\draw[dashed]
	(0,0) -- (1.7,-0.7);

\fill
	 (4,0) circle (0.1);
	 
\filldraw[fill=white]
	 (0,2.6) circle (0.1);
	 
\filldraw[fill=gray]
	 (0,0) circle (0.1);
	 
\node[fill=white,inner sep=1] at (2.4,1) {\small $x$};
\node[fill=white,inner sep=1] at (0,1.3) {\small $y$};
\node[fill=white,inner sep=1] at (2,0) {\small $z$};

\node at (-0.3,-0.1) {\small $\alpha^4$};
\node at (-0.3,2.7) {\small $\delta^3$};
\node at (4.3,0) {\small $\epsilon^n$};

\node[gray] at (0.35,0.35) {\small $\frac{1}{2}\pi$};
\node[gray] at (0.35,2.05) {\small $\frac{1}{3}\pi$};
\node[gray] at (3,0.35) {\small $\frac{1}{n}\pi$};

\node[fill=white,inner sep=1] at (0.7,1.6) {\small $a$};
\node[fill=white,inner sep=1] at (1.6,1.3) {\small $a$};
\node[fill=white,inner sep=1] at (3.35,0.8) {\small $a$};
\node[fill=white,inner sep=1] at (3,-0.3) {\small $a$};
\node[fill=white,inner sep=1] at (0.6,0.75) {\small $b$};
\node[fill=white,inner sep=1] at (0.8,-0.3) {\small $c$};

\node at (0.75,1.2) {\small $\beta$};
\node at (1.1,1) {\small $\beta$};
\node at (1.1,1.4) {\small $\epsilon$};
\node at (2.7,1.3) {\small $\delta$};
\node at (1.7,-0.5) {\small $\gamma$};


\begin{scope}[shift={(7cm,1cm)}]

\draw[line width=1.5]
	(90:1.5) -- (162:1.5);

\draw[dashed]	
	(90:1.5) -- (18:1.5);
	
\draw[gray]
	(-54:1.5) -- (90:1.5) -- (234:1.5);

\draw
	(162:1.5) -- (234:1.5) -- (-54:1.5) -- (18:1.5);

\node[fill=white, inner sep=1.5] at (44:1.25) {\small $c$};
\node[fill=white, inner sep=1.5] at (136:1.25) {\small $b$};
\node[fill=white, inner sep=1.5] at (-8:1.25) {\small $a$};
\node[fill=white, inner sep=1.5] at (188:1.25) {\small $a$};
\node[fill=white, inner sep=1.5] at (-90:1.2) {\small $a$};

\node[fill=white, inner sep=1.5] at (162:0.45) {\small $y$};
\node[fill=white, inner sep=1.5] at (18:0.45) {\small $z$};

\node at (90:1.7) {\small $A$};
\node at (162:1.7) {\small $B$};
\node at (18:1.7) {\small $C$};
\node at (234:1.7) {\small $D$};
\node at (-54:1.7) {\small $E$};
	
\node at (162:1.2) {\small $\beta$};
\node at (18:1.2) {\small $\gamma$};

\node at (-0.85,-0.6) {\small $\delta_1$};
\node at (0.9,-0.6) {\small $\epsilon_1$};
\node at (-0.6,-1) {\small $\delta_2$};
\node at (0.6,-1) {\small $\epsilon_2$};
\node at (-0.4,0.9) {\small $\alpha_1$};
\node at (0,0.7) {\small $\alpha_2$};
\node at (0.42,0.9) {\small $\alpha_3$};

\end{scope}

\end{tikzpicture}
\caption{The pentagon in double pentagonal subdivision.}
\label{pexist1}
\end{figure}

To calculate the pentagon, we need the following extended cosine law. The proof can be found in \cite[Lemma 3]{ay1}.

\begin{lemma}\label{fourth}
For the spherical quadrilateral in Figure \ref{quad2}, we have
\begin{align*}
\cos x
&=\cos a\cos b\cos c
+\sin a\sin b\cos c\cos\theta
+\cos a\sin b\sin c\cos\rho \nonumber \\
&\quad 
+\sin a\sin c\sin\theta\sin\rho
-\sin a\cos b\sin c\cos \theta\cos\rho.
\end{align*}
\end{lemma}

\begin{figure}[htp]
\centering
\begin{tikzpicture}[scale=0.8]

\draw
	(-1,1.8) -- node[fill=white,inner sep=1] {\small $a$}
	(0,0) -- node[fill=white,inner sep=1] {\small $b$}
	(2,0) -- node[fill=white,inner sep=1] {\small $c$}
	(2.8,1.5);
\draw[dashed]
	(-1,1.8) -- node[fill=white,inner sep=1] {\small $x$} 
	(2.8,1.5);
	
\node at (0.15,0.2) {\small $\theta$};
\node at (1.9,0.2) {\small $\rho$};

\end{tikzpicture}
\caption{The length of fourth edge in a quadrilateral.}
\label{quad2}
\end{figure}

The first of Figure \ref{double_tiling} gives the edges $a,b,c$ and the angles $\beta,\gamma,\delta,\epsilon$ in the first of Figure \ref{pexist1}. Then by Lemma \ref{fourth} and the cosine law, we have
\begin{align*}
\cos x
&=\cos^3a(1-\cos\delta)(1-\cos\epsilon)
+\cos^2a\sin\delta\sin\epsilon \\
&\quad +\cos a(\cos\delta+\cos\epsilon-\cos\delta\cos\epsilon)
-\sin\delta\sin\epsilon,
\end{align*}
By the know values of $\delta,\epsilon,x$, this gives cubic equations for $\cos a$
\begin{align*}
f=24 &\colon
\cos x=\tfrac{1}{3}
=\tfrac{9}{4}\cos^3a
+\tfrac{3}{4}\cos^2a
-\tfrac{5}{4}\cos a
-\tfrac{3}{4}; \\
f=48 &\colon
\cos x=\tfrac{1}{\sqrt{3}}
=\tfrac{3}{2}\cos^3a
+\tfrac{\sqrt{3}}{2}\cos^2a
-\tfrac{1}{2}\cos a
-\tfrac{\sqrt{3}}{2}; \\
f=120 &\colon
\cos x
=\tfrac{\sqrt{5}+1}{\sqrt{6(5-\sqrt{5})}} 
=\tfrac{3}{8}(5-\sqrt{5})\cos^3a
+\tfrac{1}{8}\sqrt{6}{\textstyle \sqrt{5+\sqrt{5}}}\cos^2a  \\
&\quad\quad +\tfrac{1}{8}(3\sqrt{5}-7)\cos a
-\tfrac{1}{8}\sqrt{6}{\textstyle \sqrt{5+\sqrt{5}}}.
\end{align*}
Each equation has only one real solution
\begin{align*}
f=24 &\colon
\cos a
=\tfrac{2}{9}R^{\frac{1}{3}}
+\tfrac{8}{9}R^{-\frac{1}{3}}
-\tfrac{1}{9}; \\
f=48 &\colon
\cos a
=\tfrac{1}{9}R^{\frac{1}{3}}
+\tfrac{4}{3}R^{-\frac{1}{3}}
-\tfrac{1}{3\sqrt{3}}; \\
f=120 &\colon
\cos a
=\tfrac{1}{45}R^{\frac{1}{3}}
+\tfrac{4}{3}(5-\sqrt{5})R^{-\frac{1}{3}}
-\tfrac{\sqrt{6}}{180}(3\sqrt{5}+5){\textstyle \sqrt{5-\sqrt{5}}},
\end{align*}
where
\[
R=\begin{cases}
19+3\sqrt{33}, & f=24; \\
186\sqrt{3}+54\sqrt{35}, & f=48; \\
75{\textstyle \sqrt{5-\sqrt{5}}}(\sqrt{6}(103-55\sqrt{5})
+18{\textstyle \sqrt{1055+473\sqrt{5}}}), & f=120.
\end{cases}
\]

Then we may use 
\begin{align}
\cos y
&=\cos a\cos b+\sin a\sin b\cos \beta, \label{dpeq2} \\
\cos z
&=\cos a\cos c+\sin a\sin c\cos \gamma, \label{dpeq3} 
\end{align} 
and the known values of $\beta,\gamma,a,y,z$ to calculate  $b$ and $c$
\begin{align*}
f=24 \colon
&\cos b
=\cos c
=\tfrac{1}{12}(3\sqrt{3}-\sqrt{11})R^{\frac{1}{3}}
+\tfrac{1}{3}(3\sqrt{3}+\sqrt{11})R^{-\frac{1}{3}}
-\tfrac{1}{\sqrt{3}}; \\
f=48 \colon
&\cos b
=\tfrac{\sqrt{6}}{72}(13-\sqrt{3}\sqrt{35})R^{\frac{1}{3}}
+\tfrac{\sqrt{6}}{6}(13+\sqrt{3}\sqrt{35})R^{-\frac{1}{3}}
-\tfrac{2\sqrt{2}}{3}, \\
&\cos c
=\tfrac{\sqrt{2}}{36}(11-\sqrt{3}\sqrt{35})R^{\frac{1}{3}}
+\tfrac{\sqrt{2}}{3}(11+\sqrt{3}\sqrt{35})R^{-\frac{1}{3}}
-\tfrac{2\sqrt{2}}{3\sqrt{3}}; \\
f=120 \colon
&\cos b
=\tfrac{\sqrt{3}(1+\sqrt{5})}{720}(17+4\sqrt{5}-\sqrt{3}{\textstyle \sqrt{91+40\sqrt{5}}})R^{\frac{1}{3}}  \\
&\qquad\quad +\tfrac{\sqrt{5}}{\sqrt{3}}(17+4\sqrt{5}+\sqrt{3}{\textstyle \sqrt{91+40\sqrt{5}}})R^{-\frac{1}{3}} \\
&\qquad\quad -\tfrac{1}{30}(10+3\sqrt{5}){\textstyle \sqrt{10-2\sqrt{5}}}, \\
&\cos c
=\tfrac{\sqrt{2}(1+\sqrt{5})^{\frac{3}{2}}5^{\frac{1}{4}}}{1440}(19+2\sqrt{5}-\sqrt{3}{\textstyle \sqrt{91+40\sqrt{5}}})R^{\frac{1}{3}} \\
&\qquad\quad 
+\tfrac{\sqrt{2}(1+\sqrt{5})^{\frac{1}{2}}5^{\frac{3}{4}}}{6}(19+2\sqrt{5}+\sqrt{3}{\textstyle \sqrt{91+40\sqrt{5}}})R^{-\frac{1}{3}} 
-\tfrac{\sqrt{5}}{3\sqrt{3}}.
\end{align*}
The following are the approximate values of the edge lengths ($a= 0.14869\pi$ means $0.14869\pi\le a<0.14870\pi$)
\begin{align*}
f=24 &\colon
a= 0.14869\pi,\; 
b=c= 0.20564\pi; \\
f=48 &\colon
a= 0.12780\pi, \; 
b= 0.08402\pi, \;
c= 0.16277\pi; \\
f=120 &\colon
a= 0.09605\pi, \;
b= 0.02381\pi, \;
c= 0.10819\pi.
\end{align*}
We note that for $f=24$, the equality $\beta=\gamma=\delta=\epsilon$ implies the pentagon is symmetric, and we have $b=c$.

\subsubsection*{Verify the Pentagon}

We derived the pentagon based on the assumption of the existence of double pentagonal subdivision. However, this does not necessarily imply the converse, in particular the existence of the pentagon. To show the pentagon actually exist, we can do the following:
\begin{enumerate}
\item Use the data obtained above to reconstruct the pentagon.
\item Verify the pentagon is simple.
\item Verify the expected values of the five angles of the pentagon.
\end{enumerate}

In the second of Figure \ref{pexist1}, we let $A,B,C,D,E$ be the vertices of the pentagon at $\alpha,\beta,\gamma,\delta,\epsilon$. Then the triangle $\triangle ABD$ is congruent to the triangle connecting ${\color{gray} \bullet},\beta^2\epsilon,\circ$ in the first of Figure \ref{pexist1}. Therefore $AD=y$. We also see directly from the first of Figure \ref{pexist1} that $AE=z$. This suggests that we construct the pentagon in the following way: Use $a,b$ and $\beta=(1-\frac{1}{n})\pi$ to construct $\triangle ABD$. The equality \eqref{dpeq2} implies $AD=y$. Use $a,c$ and $\gamma=\frac{3}{2}\pi$ to construct $\triangle ACE$. The equality \eqref{dpeq3} implies $AE=z$. Use $a,y,z$ to construct $\triangle ADE$. For the triangle to exist, we need to verify
\[
a+y+z<2\pi,\;
a<y+z,\;
y<a+z,\;
z<a+y.
\]
The known values of $\cos y$ and $\cos z$ give the approximate values of $y$ and $z$
\begin{align*}
f=24 &\colon
y= 0.3040\pi,\; 
z= 0.3040\pi; \\
f=48 &\colon
y= 0.1959\pi, \; 
z= 0.2500\pi; \\
f=120 &\colon
y= 0.1161\pi, \;
z= 0.1762\pi.
\end{align*}
Then we can verify the inequality conditions for $\triangle ADE$ to exist.

We have constructed a pentagon with the expected edge lengths and expected $\beta,\gamma$ values. Next, we verify the pentagon is simple. We first use the known values of $a,b,c,y,z,\beta,\gamma$ to calculate the approximate values of the angles of the three triangles
\begin{align*}
f=24\colon &
\alpha_1=0.1585\pi,\;
\alpha_2=0.1829\pi,\;
\alpha_3=0.1585\pi, 
\\
&
\delta_1=0.2204\pi,\;
\delta_2=0.4461\pi,\;
\epsilon_1=0.2204\pi,\;
\epsilon_2=0.4461\pi; \\
f=48\colon &
\alpha_1= 0.1588\pi,\;
\alpha_2= 0.1822\pi,\;
\alpha_3= 0.1588\pi,
\\
&
\delta_1= 0.1035\pi,\;
\delta_2= 0.5631\pi,\;
\epsilon_1= 0.2045\pi,\;
\epsilon_2= 0.2954\pi; \\
f=120\colon &
\alpha_1= 0.1628\pi,\;
\alpha_2= 0.1743\pi,\;
\alpha_3= 0.1628\pi,
\\
&
\delta_1= 0.0392\pi,\;
\delta_2= 0.6273\pi,\;
\epsilon_1= 0.1850\pi,\;
\epsilon_2= 0.2149\pi.
\end{align*}
This implies
\begin{align*}
\alpha &
=\alpha_1+\alpha_2+\alpha_3 <0.501\pi<\pi, \\
\delta &
=\delta_1+\delta_2<0.667\pi<\pi, \\
\epsilon &
=\epsilon_1+\epsilon_2
<\begin{cases}
0.667\pi \text{ for $n=3$} \\
0.501\pi \text{ for $n=4$} \\
0.401\pi \text{ for $n=5$} 
\end{cases}
<\pi.
\end{align*}
Combined with $\beta,\epsilon<\pi$, we see the pentagon is convex, and is therefore simple.

Next we rigorously (i.e., symbolically) verify the expected values of $\alpha,\delta,\epsilon$. By Lemma \ref{fourth}, we have
\begin{align*}
\cos z
&=\cos^2a\cos b
+\sin^2a\cos b\cos\delta \\
&\quad +\cos a\sin a\sin b\cos\beta (1-\cos\delta)
+\sin a\sin b\sin\beta\sin\delta.
\end{align*}
We regard the equality as a linear equation in $\cos\delta,\sin\delta$, with the coefficients provided by the known values of $a,b,z,\beta$. We substitute the expected value $\delta=\frac{2}{3}\pi$ and find the equality holds. Since we know $\delta=\delta_1+\delta_2=0.666\pi$, this rigorously establishes $\delta=\frac{2}{3}\pi$.

Similarly, we use the equality
\begin{align*}
\cos y
&=\cos^2a\cos c
+\sin^2a\cos c\cos\epsilon \\
&\quad +\cos a\sin a\sin c\cos\gamma (1-\cos\epsilon)
+\sin a\sin c\sin\gamma\sin\epsilon
\end{align*}
to establish $\epsilon=\frac{2}{n}\pi$.

After we establish $\delta=\frac{2}{3}\pi$ and $\epsilon=\frac{2}{n}\pi$, we use the equality
\begin{align*}
\cos b\cos c+\sin b\sin c\cos \alpha
&=\cos^3a+\sin^2 a\sin\delta\sin\epsilon \\
&\quad+\sin^2a\cos a(\cos\delta+\cos\epsilon-\cos\delta\cos\epsilon)
\end{align*}
to establish $\alpha=\frac{1}{2}\pi$.

\section{Tiling for Edge Combination $a^2b^2c$}
\label{2a2bc}

This section is devoted to the classification of edge-to-edge tilings of the sphere by congruent pentagons with the edge combination $a^2b^2c$. By Lemma \ref{edge_combo}, the pentagon is the first of Figure \ref{pentagon}. We divide the classification into three cases in Figure \ref{2a2bc_pentagon}, according to the location of the vertex $H$ of a special tile (indicated by $\bullet$). In the first case, the degree of $H$ is $3,4$ or $5$. In the second and third cases, the degree of $H$ is $4$ or $5$. In other words, we do not need to consider $3^5$-tile in the second and third cases.

\begin{figure}[htp]
\centering
\begin{tikzpicture}[>=latex,scale=1]


\foreach \a in {1,2,3}
{
\begin{scope}[xshift=-2cm + 2*\a cm]

\draw
	(90:0.8) -- (162:0.8) -- (234:0.8);

\draw[line width=1.5]
	(90:0.8) -- (18:0.8) -- (-54:0.8);

\draw[dashed]
	(-54:0.8) -- (-126:0.8);

\fill (-54+72*\a:0.8) circle (0.1);

\node at (90:0.5) {\small $\alpha$};
\node at (162:0.5) {\small $\beta$};
\node at (18:0.5) {\small $\gamma$};
\node at (234:0.5) {\small $\delta$};
\node at (-54:0.5) {\small $\epsilon$};

\node at (0,0) {\a};

\end{scope}
}

\end{tikzpicture}
\caption{Special tile for the edge combination $a^2b^2c$.}
\label{2a2bc_pentagon}
\end{figure}

Our strategy is to first tile the partial neighbourhood in the fourth of Figure \ref{nhd}. Then we study the configuration around $H$, and further tile beyond the partial neighbourhood if necessary. We will always start by assuming that the center tile $T_1$ is given by the pentagon in Figure \ref{2a2bc_pentagon}.

To facilitate discussion, we denote by $T_i$ the tile labeled $i$, by $\theta_i$ the angle $\theta$ in $T_i$, by $E_{ij}$ the edge shared by $T_i,T_j$, and by $A_{i,jk}$ the angle of $T_i$ at the vertex shared by $T_i,T_j,T_k$. When we say a tile is {\em determined}, we mean that we know all the edges and angles of the tile.

\subsection{Case 1$(a^2b^2c)$}
\label{2a2bcCase1}

Let the first of Figure \ref{2a2bc_pentagon} be the center tile $T_1$ in the partial neighbourhood in Figure \ref{2a2bcF1}. The two pictures show two possible ways of arranging the edges and angles of $T_4$. In the first picture, we may determine $T_3,T_5$, and then get the $b^2$-angle $\gamma_2$ and the $a^2$-angle $\beta_6$. In the second picture, we may get the $b^2$-angle $\gamma_3$ and the $a^2$-angle $\beta_5$. Then $E_{23}=a$ or $c$, and either way gives $\gamma_2$. Similarly, we get $\beta_6$.

\begin{figure}[htp]
\centering
\begin{tikzpicture}[>=latex]


\foreach \a in {0,1}
\foreach \b in {0,1,2,3,4}
{
\begin{scope}[xshift=3.5*\a cm]

\coordinate  (A\a X\b) at (90+72*\b:0.7);
\coordinate  (B\a X\b) at (90+72*\b:1.3);
\coordinate  (C\a X\b) at (126+72*\b:1.7);

\coordinate  (P\a X\b) at (90+72*\b:0.45);

\coordinate  (Q\a X\b) at (102+72*\b:0.8);
\coordinate  (R\a X\b) at (100+72*\b:1.16);
\coordinate  (S\a X\b) at (126+72*\b:1.45);
\coordinate  (T\a X\b) at (152+72*\b:1.16);
\coordinate  (U\a X\b) at (150+72*\b:0.8);

\end{scope}
}

\foreach \a in {0,1}
{
\begin{scope}[xshift=3.5*\a cm]

\draw
	(A\a X0) -- (A\a X1) -- (A\a X2);

\draw[line width=1.5]
	(A\a X0) -- (A\a X4) -- (A\a X3);

\draw[dashed]
	(A\a X2) -- (A\a X3);

\fill (0,0.7) circle (0.1);

\node at (P\a X0) {\small $\alpha$};
\node at (P\a X1) {\small $\beta$};
\node at (P\a X4) {\small $\gamma$};
\node at (P\a X2) {\small $\delta$};
\node at (P\a X3) {\small $\epsilon$};

\node[draw,shape=circle, inner sep=0.5] at (0,0) {\small $1$};
\node[draw,shape=circle, inner sep=0.5] at (45:1.07) {\small $2$};\node[draw,shape=circle, inner sep=0.5] at (-18:1) {\small $3$};
\node[draw,shape=circle, inner sep=0.5] at (-90:1) {\small $4$};
\node[draw,shape=circle, inner sep=0.5] at (198:1) {\small $5$};
\node[draw,shape=circle, inner sep=0.5] at (135:1.07) {\small $6$};

\end{scope}
}


\draw
	(A0X1) -- (B0X1)
	(A0X0) -- (A0X1) -- (A0X2)
	(A0X3) -- (B0X3) -- (C0X2)
	(B0X3) -- (C0X3);

\draw[line width=1.5]
	(A0X4) -- (B0X4)
	(A0X0) -- (A0X4) -- (A0X3)
	(A0X2) -- (B0X2) -- (C0X2)
	(B0X2) -- (C0X1);

\draw[dashed]
	(A0X2) -- (A0X3)
	(B0X1) -- (C0X1)
	(B0X4) -- (C0X3);

\draw[dotted]
	(90:0.7) -- (60:1.5) -- (40:1.7) -- (18:1.3)
	(90:0.7) -- (120:1.5) -- (140:1.7) -- (162:1.3);
	
\node at (P0X0) {\small $\alpha$};
\node at (P0X1) {\small $\beta$};
\node at (P0X4) {\small $\gamma$};
\node at (P0X2) {\small $\delta$};
\node at (P0X3) {\small $\epsilon$};

\node at (Q0X2) {\small $\epsilon$};
\node at (R0X2) {\small $\gamma$};
\node at (S0X2) {\small $\alpha$};
\node at (T0X2) {\small $\beta$};
\node at (U0X2) {\small $\delta$};

\node at (Q0X1) {\small $\beta$};
\node at (R0X1) {\small $\delta$};
\node at (S0X1) {\small $\epsilon$};
\node at (T0X1) {\small $\gamma$};
\node at (U0X1) {\small $\alpha$};

\node at (Q0X3) {\small $\alpha$};
\node at (R0X3) {\small $\beta$};
\node at (S0X3) {\small $\delta$};
\node at (T0X3) {\small $\epsilon$};
\node at (U0X3) {\small $\gamma$};
	
\node at (32:0.8) {\small $\gamma$};
\node at (148:0.8) {\small $\beta$};


\draw
	(A1X2) -- (B1X2) -- (C1X2);
	
\draw[line width=1.5]
	(A1X3) -- (B1X3) -- (C1X2);

\draw[dotted]
	(A1X1) -- (B1X1) -- (C1X1) -- (B1X2)
	(A1X4) -- (B1X4) -- (C1X3) -- (B1X3);

\node at (U1X1) {\small $\beta$};
\node at (Q1X3) {\small $\gamma$};

\node at (Q1X2) {\small $\delta$};
\node at (R1X2) {\small $\beta$};
\node at (S1X2) {\small $\alpha$};
\node at (T1X2) {\small $\gamma$};
\node at (U1X2) {\small $\epsilon$};

\begin{scope}[xshift=3.5cm]

\draw
	(90:0.7) -- (120:1.5);

\draw[line width=1.5]
	(90:0.7) -- (60:1.5);

\draw[dotted]
	(60:1.5) -- (40:1.7) -- (18:1.3)
	(120:1.5) -- (140:1.7) -- (162:1.3);
	
\node at (65:0.75) {\small $\gamma$};
\node at (115:0.75) {\small $\beta$};

\end{scope}

\end{tikzpicture}
\caption{Cases 1 for $a^2b^2c$.}
\label{2a2bcF1}
\end{figure}

In the first of Figure \ref{2a2bcF1}, the angle sums of $\alpha\delta\epsilon,\beta^3,\gamma^3$ and the angle sum for pentagon imply $f=12$. By $f\ge 16$ (see the remark before Lemma \ref{base_tile}), we may dismiss the case. In fact, the case is the deformed dodecahedron in \cite{ay1,gsy}, which is the pentagonal subdivision of the regular tetrahedron. 

In the second of Figure \ref{2a2bcF1}, by the edge length consideration, the vertex $H=\alpha^2\beta\gamma, \alpha^2\beta^2\gamma,\alpha^2\beta\gamma^2, \alpha\beta\gamma\delta\epsilon$. By Lemma \ref{anglesum}, $\alpha\beta\gamma\delta\epsilon$ is not a vertex. Moreover, the first three possibilities imply $2\alpha+\beta+\gamma\le 2\pi$. Combined with the angle sums of $\beta\delta^2,\gamma\epsilon^2$, we get 
\[
\alpha+\beta+\gamma+\delta+\epsilon
=\tfrac{1}{2}((2\alpha+\beta+\gamma)+(\beta+2\delta)+(\gamma+2\epsilon))\le 3\pi,
\]
again contradicting Lemma \ref{anglesum}.

\subsection{Case 2$(a^2b^2c)$}
\label{2a2bcCase2}

Let the second of Figure \ref{2a2bc_pentagon} be the center tile $T_1$ in the partial neighbourhood in Figure \ref{2a2bcF2}. There are two possible arrangements of $T_5$. The first picture shows one arrangement, and the second and third show the other arrangement. We label the three partial neighbourhood tilings as Cases 2.1, 2.2, 2.3. We also recall that, after Case 1, the degree of $H$ is $4$ or $5$.

In the first picture, we get the $b^2$-angle $\gamma_4$ and the $a^2$-angle $\beta_6$. Then $E_{34}=a$ or $c$, and either way gives $\gamma_3$ and $E_{23}=b$. This determines $T_2$. 

In the second and third pictures, we may determine $T_4,T_6$. Then the two pictures show two possible arrangements of $T_3$. In the second picture, we get the $a^2$-angle $\beta_2$. In the third picture, we may determine $T_2$. 

\begin{figure}[htp]
\centering
\begin{tikzpicture}[>=latex]

\foreach \a in {0,1,2}
\foreach \b in {0,1,2,3,4}
{
\begin{scope}[xshift=3.5*\a cm]

\coordinate  (A\a X\b) at (90+72*\b:0.7);
\coordinate  (B\a X\b) at (90+72*\b:1.3);
\coordinate  (C\a X\b) at (126+72*\b:1.7);

\coordinate  (P\a X\b) at (90+72*\b:0.45);

\coordinate  (Q\a X\b) at (102+72*\b:0.8);
\coordinate  (R\a X\b) at (100+72*\b:1.16);
\coordinate  (S\a X\b) at (126+72*\b:1.45);
\coordinate  (T\a X\b) at (152+72*\b:1.16);
\coordinate  (U\a X\b) at (150+72*\b:0.8);

\coordinate  (L\a X\b) at (0:0.8);

\fill (0,0.7) circle (0.1);

\end{scope}
}

\foreach \a in {0,1,2}
{
\begin{scope}[xshift=3.5*\a cm]

\draw
	(A\a X1) -- (A\a X0) -- (A\a X4);

\draw[line width=1.5]
	(A\a X4) -- (A\a X3) -- (A\a X2);

\draw[dashed]
	(A\a X1) -- (A\a X2);

\node at (P\a X0) {\small $\beta$};
\node at (P\a X1) {\small $\delta$};
\node at (P\a X4) {\small $\alpha$};
\node at (P\a X2) {\small $\epsilon$};
\node at (P\a X3) {\small $\gamma$};

\node[draw,shape=circle, inner sep=0.5] at (0,0) {\small $1$};
\node[draw,shape=circle, inner sep=0.5] at (45:1.07) {\small $2$};\node[draw,shape=circle, inner sep=0.5] at (-18:1) {\small $3$};
\node[draw,shape=circle, inner sep=0.5] at (-90:1) {\small $4$};
\node[draw,shape=circle, inner sep=0.5] at (198:1) {\small $5$};
\node[draw,shape=circle, inner sep=0.5] at (135:1.07) {\small $6$};
	
\end{scope}
}


\draw
	(A0X1) -- (B0X1)  -- (C0X1)
	(A0X1) -- (60:1.5);
	
\draw[line width=1.5]
	(A0X4) -- (B0X4) -- (40:1.7)
	(A0X2) -- (B0X2) -- (C0X1);

\draw[dashed]
	(60:1.5) -- (40:1.7);
	
\draw[dotted]
	(A0X0) -- (120:1.5) -- (140:1.7) -- (B0X1)
	(B0X4) -- (C0X3) -- (B0X3) -- (C0X2) -- (B0X2)
	(A0X3) -- (B0X3);

\node at (Q0X2) {\small $\gamma$};

\node at (U0X3) {\small $\gamma$};

\node at (Q0X1) {\small $\delta$};
\node at (R0X1) {\small $\beta$};
\node at (S0X1) {\small $\alpha$};
\node at (T0X1) {\small $\gamma$};
\node at (U0X1) {\small $\epsilon$};

\node at (148:0.8) {\small $\beta$};

\node at (66:0.79) {\small $\beta$};
\node at (32:0.8) {\small $\alpha$};
\node at (28:1.2) {\small $\gamma$};
\node at (42:1.5) {\small $\epsilon$};
\node at (55:1.35) {\small $\delta$};


\draw
	(A1X2) -- (B1X2) -- (C1X1)
	(A1X4) -- (B1X4) -- (C1X3)
	(B1X2) -- (C1X2);
	
\draw[line width=1.5]
	(A1X1) -- (B1X1) -- (C1X1)
	(A1X3) -- (B1X3);

\draw[dashed]
	(C1X3) -- (B1X3) -- (C1X2);

\node at (Q1X1) {\small $\epsilon$};
\node at (R1X1) {\small $\gamma$};
\node at (S1X1) {\small $\alpha$};
\node at (T1X1) {\small $\beta$};
\node at (U1X1) {\small $\delta$};

\node at (Q1X2) {\small $\alpha$};
\node at (R1X2) {\small $\beta$};
\node at (S1X2) {\small $\delta$};
\node at (T1X2) {\small $\epsilon$};
\node at (U1X2) {\small $\gamma$};

\node at (Q1X3) {\small $\gamma$};
\node at (R1X3) {\small $\epsilon$};
\node at (S1X3) {\small $\delta$};
\node at (T1X3) {\small $\beta$};
\node at (U1X3) {\small $\alpha$};

\begin{scope}[xshift=3.5cm]

\draw
	(90:0.7) -- (120:1.5);

\draw[line width=1.5]
	(162:1.3) -- (140:1.7);

\draw[dashed]
	(140:1.7) -- (120:1.5);
	
\draw[dotted]
	(90:0.7) -- (60:1.5) -- (40:1.7) -- (18:1.3);

\node at (115:0.75) {\small $\beta$};
\node at (148:0.8) {\small $\alpha$};
\node at (152:1.2) {\small $\gamma$};
\node at (139:1.5) {\small $\epsilon$};
\node at (125:1.35) {\small $\delta$};

\node at (30:0.8) {\small $\beta$};

\end{scope}


\draw
	(A2X2) -- (B2X2) -- (C2X1)
	(B2X4) -- (C2X3) -- (B2X3)
	(B2X2) -- (C2X2);
	
\draw[line width=1.5]
	(A2X1) -- (B2X1) -- (C2X1)
	(A2X3) -- (B2X3);

\draw[dashed]
	(B2X3) -- (C2X2)
	(A2X4) -- (B2X4);

\node at (Q2X1) {\small $\epsilon$};
\node at (R2X1) {\small $\gamma$};
\node at (S2X1) {\small $\alpha$};
\node at (T2X1) {\small $\beta$};
\node at (U2X1) {\small $\delta$};

\node at (Q2X2) {\small $\alpha$};
\node at (R2X2) {\small $\beta$};
\node at (S2X2) {\small $\delta$};
\node at (T2X2) {\small $\epsilon$};
\node at (U2X2) {\small $\gamma$};

\node at (Q2X3) {\small $\gamma$};
\node at (R2X3) {\small $\alpha$};
\node at (S2X3) {\small $\beta$};
\node at (T2X3) {\small $\delta$};
\node at (U2X3) {\small $\epsilon$};

\begin{scope}[xshift=7cm]

\draw
	(120:1.5) -- (90:0.7) -- (60:1.5);

\draw[line width=1.5]
	(162:1.3) -- (140:1.7)
	(60:1.5) -- (40:1.7) -- (18:1.3);

\draw[dashed]
	(140:1.7) -- (120:1.5);

\node at (114:0.73) {\small $\beta$};
\node at (148:0.8) {\small $\alpha$};
\node at (152:1.2) {\small $\gamma$};
\node at (138:1.5) {\small $\epsilon$};
\node at (125:1.35) {\small $\delta$};

\node at (66:0.77) {\small $\beta$};
\node at (32:0.8) {\small $\delta$};
\node at (28:1.2) {\small $\epsilon$};
\node at (41:1.5) {\small $\gamma$};
\node at (55:1.35) {\small $\alpha$};

\end{scope}


\foreach \a in {1,...,3}
\node[xshift=-3.5 cm + 3.5*\a cm] at (-54:1.8) {2.\a};

\end{tikzpicture}
\caption{Case 2 for $a^2b^2c$.}
\label{2a2bcF2}
\end{figure}

\subsubsection*{Case 2.1}

By (the adjacency to) $\beta_6$, we have $A_{6,12}=\alpha,\delta$. Then by (the angle sums of) $\alpha^2\gamma$ and $\beta\delta^2$, and the edge length consideration, we get $H=\thin\alpha\thick\alpha\thin\beta\thin\beta\thin$, $\thin\alpha\thick\alpha\thin\beta\thin\beta\thin\beta\thin$, $\thick\alpha\thin\beta\thin\beta\thin\delta\dash\epsilon\thick$. The angle sums of $\alpha^2\gamma, \gamma\epsilon^2, \beta\delta^2,H$ and the angle sum for pentagon imply
\begin{align*}
H=\alpha^2\beta^2
&\colon
\alpha=\epsilon=(1-\tfrac{8}{f})\pi,\;
\beta=\tfrac{8}{f}\pi,\;
\gamma=\tfrac{16}{f}\pi,\;
\delta=(1-\tfrac{4}{f})\pi; \\
H=\alpha^2\beta^3
&\colon
\alpha=\epsilon=(1-\tfrac{12}{f})\pi,\;
\beta=\tfrac{8}{f}\pi,\;
\gamma=\tfrac{24}{f}\pi,\;
\delta=(1-\tfrac{4}{f})\pi; \\
H=\alpha\beta^2\delta\epsilon
&\colon
\alpha=\epsilon=(\tfrac{1}{2}-\tfrac{6}{f})\pi,\;
\beta=\tfrac{8}{f}\pi,\;
\gamma=(1+\tfrac{12}{f})\pi,\;
\delta=(1-\tfrac{4}{f})\pi.
\end{align*}

For $H=\alpha\beta^2\delta\epsilon$, we have $H=\thin\alpha^{\gamma}\thick^{\gamma}\epsilon\dash\cdots$. This gives a vertex $\gamma^2\cdots$, contradicting $\gamma>\pi$.

For $H=\alpha^2\beta^2,\alpha^2\beta^3$, we have $H=\thin\alpha^{\gamma}\thick^{\gamma}\alpha\thin\cdots$. This gives a vertex $\thick^{\epsilon}\gamma^{\alpha}\thick^{\alpha}\gamma^{\epsilon}\thick\cdots$. If $\thick^{\epsilon}\gamma^{\alpha}\thick^{\alpha}\gamma^{\epsilon}\thick\cdots$ has $a$ or $c$, then by the edge length consideration, the vertex has at least two of $\alpha$ or $\epsilon$. By $\gamma+\alpha>\pi$ and $\gamma+\epsilon>\pi$, we get a contradiction. Therefore $\thick^{\epsilon}\gamma^{\alpha}\thick^{\alpha}\gamma^{\epsilon}\thick\cdots$ has only $b$. This means $\thick^{\epsilon}\gamma^{\alpha}\thick^{\alpha}\gamma^{\epsilon}\thick\cdots=\thick\gamma\thick\gamma\thick\cdots\thick\gamma\thick=\gamma^k$. By Lemma \ref{aadlemma}, we have $\thick^{\alpha}\gamma^{\epsilon}\thick^{\epsilon}\gamma^{\alpha}\thick$ at this $\gamma^k$, which gives a vertex $\epsilon\thick\epsilon\cdots$. We have $\epsilon\thick\epsilon\cdots=\theta\dash\epsilon\thick\epsilon\dash\rho\cdots$, with $\theta,\rho=\delta$ or $\epsilon$. Moreover, by no $c^2$-angle, we know $\theta,\rho$ are two angles. By $\delta>\epsilon$, the angle sum of $\theta\dash\epsilon\thick\epsilon\dash\rho\cdots$ implies $4\epsilon\le 2\pi$. On the other hand, the angle sum of $\gamma^k$ implies $\gamma\le \frac{2}{3}\pi$. Then $\gamma+2\epsilon\le \frac{5}{3}\pi<2\pi$, contradicting the vertex $\gamma\epsilon^2$.

\subsubsection*{Case 2.2}

By $\beta_2$, we have $A_{2,16}=\alpha,\delta$. Then by $\alpha^2\beta, \alpha\delta\epsilon$, and the edge length consideration, we get $H=\beta^2\delta^2,\beta^3\delta^2$. The angle sums of $\alpha^2\beta, \alpha\delta\epsilon, \gamma^3,H$ and the angle sum for pentagon imply
\begin{align*}
H=\beta^2\delta^2
&\colon
\alpha=(\tfrac{5}{6}-\tfrac{2}{f})\pi,\;
\beta=(\tfrac{1}{3}+\tfrac{4}{f})\pi,\;
\gamma=\tfrac{2}{3}\pi,\;
\delta=(\tfrac{2}{3}-\tfrac{4}{f})\pi,\;
\epsilon=(\tfrac{1}{2}+\tfrac{6}{f})\pi; \\
H=\beta^3\delta^2
&\colon
\alpha=(\tfrac{5}{6}-\tfrac{2}{f})\pi,\;
\beta=(\tfrac{1}{3}+\tfrac{4}{f})\pi,\;
\gamma=\tfrac{2}{3}\pi,\;
\delta=(\tfrac{1}{2}-\tfrac{6}{f})\pi,\;
\epsilon=(\tfrac{2}{3}+\tfrac{8}{f})\pi.
\end{align*}
By the edge length consideration, we have $\epsilon_3\epsilon_4\cdots=\theta\dash\epsilon\thick\epsilon\dash\rho\cdots$, with $\theta,\rho=\delta$ or $\epsilon$, and $\theta,\rho$ are two angles. By $\delta+\epsilon>\pi$ and $2\epsilon>\pi$, we get a contradiction. 

\subsubsection*{Case 2.3}

By the edge length consideration, we have $H=\alpha^2\beta^3,\beta^3\delta^2,\beta^4,\beta^5$. 

Suppose $H=\alpha^2\beta^3$. The angle sums of $\alpha\delta\epsilon,\gamma^3,\alpha^2\beta^3$ and the angle sum for pentagon imply
\[
\alpha=(\tfrac{1}{2}-\tfrac{6}{f})\pi,\;
\beta=(\tfrac{1}{3}+\tfrac{4}{f})\pi,\;
\gamma=\tfrac{2}{3}\pi, \;
\delta+\epsilon=(\tfrac{3}{2}+\tfrac{6}{f})\pi.
\]
By the third of Figure \ref{2a2bcF2}, we have $H=\thin^{\alpha}\beta^{\delta}\thin^{\beta}\alpha^{\gamma}\thick^{\gamma}\alpha^{\beta}\thin^{\alpha}\beta^{\delta}\thin^{\alpha}\beta^{\delta}\thin$. This gives $\thin\beta\thin\delta\dash\cdots=\thin\beta\thin\delta\dash\delta\thin\cdots$ or $\thin\beta\thin\delta\dash\epsilon\thick\cdots$.

The vertex $\thin\beta\thin\delta\dash\delta\thin\cdots$ implies 
$\delta\le \pi-\frac{1}{2}\beta=(\frac{5}{6}-\frac{2}{f})\pi$ and $\epsilon=(\frac{3}{2}+\frac{6}{f})\pi-\delta\ge (\frac{2}{3}+\frac{8}{f})\pi>\alpha,\gamma$. The AAD of $\thin\delta\dash\delta\thin$ is $\thin^{\beta}\delta^{\epsilon}\dash^{\epsilon}\delta^{\beta}\thin$. This gives a vertex $\thick\epsilon\dash\epsilon\thick\cdots$. By $\gamma+2\epsilon>3\gamma=2\pi$, we know $\thick\epsilon\dash\epsilon\thick\cdots$ has no $\gamma$. Then by the edge length consideration, we have $\thick\epsilon\dash\epsilon\thick\cdots=\theta\thick\epsilon\dash\epsilon\thick\rho\cdots$, where $\theta,\rho=\alpha,\epsilon$ are two angles. By $\alpha<\epsilon$ and $\alpha+\epsilon\ge (\tfrac{1}{2}-\tfrac{6}{f})\pi+(\frac{2}{3}+\frac{8}{f})\pi>\pi$, we know $\theta\thick\epsilon\dash\epsilon\thick\rho\cdots$ is not a vertex. 

By $R(\thin\beta\thin\delta\dash\epsilon\thick\cdots)=(\frac{1}{6}-\frac{10}{f})\pi<\alpha,\beta,\gamma$, the remainder has only $\delta,\epsilon$. Then we get $\thin\beta\thin\delta\dash\epsilon\thick\cdots=\delta\thin\beta\thin\delta\dash\epsilon\thick\epsilon\cdots$, contradicting $\delta+\epsilon>\pi$.

Suppose $H=\beta^3\delta^2$. The angle sums of $\alpha\delta\epsilon,\gamma^3,\beta^3\delta^2$ and the angle sum for pentagon imply
\[
\alpha+\epsilon=(\tfrac{3}{2}+\tfrac{6}{f})\pi,\;
\beta=(\tfrac{1}{3}+\tfrac{4}{f})\pi,\;
\gamma=\tfrac{2}{3}\pi,\;
\delta=(\tfrac{1}{2}-\tfrac{6}{f})\pi.
\]
By the third of Figure \ref{2a2bcF2}, we have $H=\thin^{\alpha}\beta^{\delta}\thin^{\beta}\delta^{\epsilon}\dash^{\epsilon}\delta^{\beta}\thin^{\alpha}\beta^{\delta}\thin^{\alpha}\beta^{\delta}\thin$. Therefore $\thick\alpha\thin\beta\thin\cdots$ and $\thick\epsilon\dash\epsilon\thick\cdots$ are vertices.

We have $\thick\epsilon\dash\epsilon\thick\cdots=\thick\gamma\thick\epsilon\dash\epsilon\thick,\theta\thick\epsilon\dash\epsilon\thick\rho\cdots$, where $\theta,\rho=\alpha,\gamma,\epsilon$ are two angles. This implies one of $\gamma+2\epsilon\le 2\pi$, $\alpha+\epsilon\le\pi$, $\gamma+\epsilon\le\pi$,  $2\epsilon\le \pi$, holds. By $\alpha+\epsilon>\pi$ and $\gamma=\tfrac{2}{3}\pi$, this means $\epsilon\le \frac{2}{3}\pi=\gamma$. Then $\alpha=(\tfrac{3}{2}+\tfrac{6}{f})\pi-\epsilon\ge (\tfrac{5}{6}+\tfrac{6}{f})\pi>\gamma\ge \epsilon$.

We have $\thick\alpha\thin\beta\thin\cdots=\thin\alpha\thick\alpha\thin\beta\thin\cdots,\thick\gamma\thick\alpha\thin\beta\thin\cdots,\dash\epsilon\thick\alpha\thin\beta\thin\cdots$. By $2\alpha+\beta\ge 2(\tfrac{5}{6}+\tfrac{6}{f})\pi+(\tfrac{1}{3}+\tfrac{4}{f})\pi>2\pi$, we know $\thin\alpha\thick\alpha\thin\beta\thin\cdots$ is not a vertex.

By $R(\dash\epsilon\thick\alpha\thin\beta\thin\cdots)=(\frac{1}{6}-\frac{10}{f})\pi<\beta,\gamma,\delta$, the remainder has only $\alpha,\epsilon$. Then we get $\dash\epsilon\thick\alpha\thin\beta\thin\cdots=\epsilon\dash\epsilon\thick\alpha\thin\beta\thin\alpha\cdots$, contradicting $\alpha+\epsilon>\pi$.

Suppose $\thick\gamma\thick\alpha\thin\beta\thin\cdots$ is a vertex. Then $\alpha\le 2\pi-\beta-\gamma=(1-\frac{4}{f})\pi$, and $\epsilon=(\tfrac{3}{2}+\tfrac{6}{f})\pi-\alpha\ge(\frac{1}{2}+\frac{10}{f})\pi>\frac{1}{2}\pi$. By $\alpha>\gamma\ge \epsilon$, this implies $\theta\thick\epsilon\dash\epsilon\thick\rho\cdots$ ($\theta,\rho=\alpha,\gamma,\epsilon$ are two angles) is not a vertex. Therefore $\thick\epsilon\dash\epsilon\thick\cdots=\thick\gamma\thick\epsilon\dash\epsilon\thick$ is a vertex. The angle sum of $\gamma\epsilon^2$ further implies
\[
\alpha=(\tfrac{5}{6}+\tfrac{6}{f})\pi,\;
\beta=(\tfrac{1}{3}+\tfrac{4}{f})\pi,\;
\gamma=\epsilon=\tfrac{2}{3}\pi,\;
\delta=(\tfrac{1}{2}-\tfrac{6}{f})\pi.
\]
Then $R(\thick\gamma\thick\alpha\thin\beta\thin\cdots)=(\tfrac{1}{6}-\tfrac{10}{f})\pi<$ all angles. Since $\thick\gamma\thick\alpha\thin\beta\thin$ is not a vertex by the edge length consideration, this implies $\thick\gamma\thick\alpha\thin\beta\thin\cdots$ is not a vertex.

We will discuss the remaining cases $H=\beta^4,\beta^5$ in Section \ref{pent_division}.

\subsection{Case 3$(a^2b^2c)$}
\label{2a2bcCase3}

Let the third of Figure \ref{2a2bc_pentagon} be the center tile $T_1$ in the partial neighbourhood in Figure \ref{2a2bcF4}. We consider two possible arrangements of $T_6$. The first and second pictures show one arrangement. The third picture shows the other arrangement. We label the three partial neighbourhood tilings as Cases 3.1, 3.2, 3.3. Again, the degree of $H$ is $4$ or $5$.

In the first and second pictures, we may determine $T_5$ and the $b^2$-angle $\gamma_4$. Then the two pictures show two possible arrangements of $T_4$. In the first picture, we may determine $T_3$ and the $a^2$-angle $\beta_2$. In the second picture, we get the $a^2$-angle $\beta_3$. Then $E_{56}=b$ or $c$, and either way gives $\beta_2$. In the third picture, we get the $b^2$-angle $\gamma_5$. Then $E_{34}=a$ or $c$, and either way gives $\gamma_4$. This further determines $T_3$ and gives the $a^2$-angle $\beta_2$. 

\begin{figure}[htp]
\centering
\begin{tikzpicture}[>=latex]

\foreach \a in {0,1,2}
\foreach \b in {0,1,2,3,4}
{
\begin{scope}[xshift=3.5*\a cm]

\coordinate  (A\a X\b) at (90+72*\b:0.7);
\coordinate  (B\a X\b) at (90+72*\b:1.3);
\coordinate  (C\a X\b) at (126+72*\b:1.7);

\coordinate  (P\a X\b) at (90+72*\b:0.45);

\coordinate  (Q\a X\b) at (102+72*\b:0.8);
\coordinate  (R\a X\b) at (100+72*\b:1.16);
\coordinate  (S\a X\b) at (126+72*\b:1.45);
\coordinate  (T\a X\b) at (152+72*\b:1.16);
\coordinate  (U\a X\b) at (150+72*\b:0.8);

\coordinate  (L\a X\b) at (0:0.8);

\fill (0,0.7) circle (0.1);

\end{scope}
}

\foreach \a in {0,1,2}
{
\begin{scope}[xshift=3.5*\a cm]

\draw
	(A\a X0) -- (A\a X4) -- (A\a X3);

\draw[line width=1.5]
	(A\a X1) -- (A\a X2) -- (A\a X3);

\draw[dashed]
	(A\a X0) -- (A\a X1);

\node at (P\a X0) {\small $\delta$};
\node at (P\a X1) {\small $\epsilon$};
\node at (P\a X4) {\small $\beta$};
\node at (P\a X2) {\small $\gamma$};
\node at (P\a X3) {\small $\alpha$};

\node[draw,shape=circle, inner sep=0.5] at (0,0) {\small $1$};
\node[draw,shape=circle, inner sep=0.5] at (45:1.07) {\small $2$};
\node[draw,shape=circle, inner sep=0.5] at (-18:1) {\small $3$};
\node[draw,shape=circle, inner sep=0.5] at (-90:1) {\small $4$};
\node[draw,shape=circle, inner sep=0.5] at (198:1) {\small $5$};
\node[draw,shape=circle, inner sep=0.5] at (135:1.07) {\small $6$};
	
\end{scope}
}


\draw
	(A0X1) -- (B0X1) -- (C0X1)
	(B0X2) -- (C0X2) -- (B0X3)
	(A0X4) -- (B0X4)
	(162:1.3) -- (140:1.7);
	
\draw[line width=1.5]
	(A0X2) -- (B0X2)
	(B0X3) -- (C0X3) -- (B0X4)
	(A0X0) -- (120:1.5) -- (140:1.7);

\draw[dashed]
	(C0X1) -- (B0X2)
	(A0X3) -- (B0X3);

\draw[dotted]
	(A0X1) -- (60:1.5) -- (40:1.7) -- (B0X4);
	
\node at (Q0X1) {\small $\alpha$};
\node at (R0X1) {\small $\beta$};
\node at (S0X1) {\small $\delta$};
\node at (T0X1) {\small $\epsilon$};
\node at (U0X1) {\small $\gamma$};

\node at (Q0X2) {\small $\gamma$};
\node at (R0X2) {\small $\alpha$};
\node at (S0X2) {\small $\beta$};
\node at (T0X2) {\small $\delta$};
\node at (U0X2) {\small $\epsilon$};

\node at (Q0X3) {\small $\delta$};
\node at (R0X3) {\small $\epsilon$};
\node at (S0X3) {\small $\gamma$};
\node at (T0X3) {\small $\alpha$};
\node at (U0X3) {\small $\beta$};

\node at (115:0.75) {\small $\epsilon$};
\node at (148:0.8) {\small $\delta$};
\node at (152:1.2) {\small $\beta$};
\node at (139:1.5) {\small $\alpha$};
\node at (125:1.35) {\small $\gamma$};

\node at (32:0.8) {\small $\beta$};


\begin{scope}[xshift=3.5cm]

\draw
	(A1X1) -- (B1X1) -- (C1X1)
	(A1X3) -- (B1X3) -- (C1X2)
	(90:0.7) -- (60:1.5)
	(B1X1) -- (140:1.7);
	
\draw[line width=1.5]
	(A1X2) -- (B1X2)
	(A1X0) -- (120:1.5) -- (140:1.7);

\draw[dashed]
	(C1X1) -- (B1X2) -- (C1X2);

\draw[dotted]
	(B1X3) -- (C1X3) -- (B1X4) -- (A1X4)
	(60:1.5) -- (40:1.7) -- (18:1.3);

\node at (Q1X1) {\small $\alpha$};
\node at (R1X1) {\small $\beta$};
\node at (S1X1) {\small $\delta$};
\node at (T1X1) {\small $\epsilon$};
\node at (U1X1) {\small $\gamma$};

\node at (Q1X2) {\small $\gamma$};
\node at (R1X2) {\small $\epsilon$};
\node at (S1X2) {\small $\delta$};
\node at (T1X2) {\small $\beta$};
\node at (U1X2) {\small $\alpha$};

\node at (Q1X3) {\small $\beta$};

\node at (115:0.75) {\small $\epsilon$};
\node at (148:0.8) {\small $\delta$};
\node at (152:1.2) {\small $\beta$};
\node at (139:1.5) {\small $\alpha$};
\node at (125:1.35) {\small $\gamma$};

\node at (65:0.8) {\small $\beta$};

\end{scope}


\begin{scope}[xshift=7cm]

\draw
	(A2X4) -- (B2X4)
	(A2X0) -- (120:1.5) -- (140:1.7);
	
\draw[line width=1.5]
	(A2X1) -- (B2X1) -- (140:1.7)
	(A2X3) -- (B2X3) -- (C2X3);

\draw[dashed]
	(B2X4) -- (C2X3);

\draw[dotted]
	(A2X1) -- (60:1.5) -- (40:1.7) -- (B2X4)
	(B2X1) -- (C2X1) -- (B2X2) -- (C2X2) -- (B2X3)
	(A2X2) -- (B2X2);

\node at (Q2X1) {\small $\gamma$};
\node at (U2X2) {\small $\gamma$};

\node at (Q2X3) {\small $\alpha$};
\node at (R2X3) {\small $\gamma$};
\node at (S2X3) {\small $\delta$};
\node at (T2X3) {\small $\epsilon$};
\node at (U2X3) {\small $\beta$};
	
\node at (32:0.8) {\small $\beta$};

\node at (115:0.75) {\small $\delta$};
\node at (148:0.8) {\small $\epsilon$};
\node at (152:1.2) {\small $\gamma$};
\node at (139:1.5) {\small $\alpha$};
\node at (127:1.33) {\small $\beta$};

\end{scope}


\foreach \a in {1,...,3}
\node[xshift=-3.5 cm + 3.5*\a cm] at (-54:1.8) {3.\a};

\end{tikzpicture}
\caption{Case 3 for $a^2b^2c$.}
\label{2a2bcF4}
\end{figure}

\subsubsection*{Case 3.1}

The angle sums of $\alpha\delta\epsilon,\beta^3,\gamma^3$ and the angle sum for pentagon imply $f=12$. By $f\ge 16$, we may dismiss the case.

\subsubsection*{Case 3.2}

By $\alpha\delta\epsilon$, we know $R(\beta\delta\epsilon\cdots)$ has no $\alpha$. Then by the edge length consideration, we have $H=\beta\delta^2\epsilon^2$. The angle sums of $\alpha^2\beta,\gamma^3,\beta\delta^2\epsilon^2$ imply
\[
\alpha+\beta+\gamma+\delta+\epsilon
=\tfrac{1}{2}(2\alpha+\beta)+\tfrac{1}{3}3\gamma+\tfrac{1}{2}(\beta+2\delta+2\epsilon)
=\tfrac{8}{3}\pi,
\]
contradicting the angle sum for pentagon.

\subsubsection*{Case 3.3}

The angle sums of $\alpha^2\gamma,\beta^3,\gamma\epsilon^2$ and the angle sum for pentagon imply
\[
\alpha=\epsilon=\pi-\tfrac{1}{2}\gamma,\;
\beta=\tfrac{2}{3}\pi,\;
\delta=(\tfrac{1}{3}+\tfrac{4}{f})\pi.
\]
By $\beta_2$, we have $A_{2,16}=\alpha,\delta$. Then by $\alpha^2\gamma$, $\beta+4\delta>2\pi$, and the edge length consideration, we have $H=\alpha^2\beta\delta^2,\alpha^2\delta^2,\alpha\delta^3\epsilon,\delta^4$. 

The angle sum of $H$ further implies 
\begin{align*}
H=\alpha^2\beta\delta^2
&\colon
\alpha=\epsilon=(\tfrac{1}{3}-\tfrac{4}{f})\pi,\;
\beta=\tfrac{2}{3}\pi,\;
\gamma=(\tfrac{4}{3}+\tfrac{8}{f})\pi,\;
\delta=(\tfrac{1}{3}+\tfrac{4}{f})\pi; \\
H=\alpha^2\delta^2
&\colon
\alpha=\epsilon=(\tfrac{2}{3}-\tfrac{4}{f})\pi,\;
\beta=\tfrac{2}{3}\pi,\;
\gamma=(\tfrac{2}{3}+\tfrac{8}{f})\pi,\;
\delta=(\tfrac{1}{3}+\tfrac{4}{f})\pi; \\
H=\alpha\delta^3\epsilon
&\colon
\alpha=\epsilon=(\tfrac{1}{2}-\tfrac{6}{f})\pi,\;
\beta=\tfrac{2}{3}\pi,\;
\gamma=(1+\tfrac{12}{f})\pi,\;
\delta=(\tfrac{1}{3}+\tfrac{4}{f})\pi.
\end{align*}
By the edge length consideration, $H=\alpha^2\beta\delta^2, \alpha^2\delta^2$ is $\thin^{\beta}\alpha^{\gamma}\thick^{\gamma}\alpha^{\beta}\thin\cdots$, and $H=\alpha\delta^3\epsilon$ is $\thin^{\beta}\alpha^{\gamma}\thick^{\gamma}\epsilon^{\delta}\dash\cdots$. Therefore $\thick\gamma\thick\gamma\thick\cdots$ is a vertex. This implies $\gamma<\pi$. By the list of angle values, this implies $H=\alpha^2\delta^2$. Then  $R(\thick\gamma\thick\gamma\thick\cdots)=(\tfrac{2}{3}-\tfrac{16}{f})\pi<\alpha,\beta,\gamma,2\delta,\epsilon$. Therefore the remainder is a single $\delta$. However, $\gamma^2\delta$ is not a vertex by the edge length consideration. We get a contradiction.

We will discuss the remaining case $H=\delta^4$ in Section \ref{pent_division}.

\subsection{Pentagonal Subdivision}
\label{pent_division}

After Sections \ref{2a2bcCase1}, \ref{2a2bcCase2}, \ref{2a2bcCase3}, the only remaining cases for the edge combination $a^2b^2c$ are the following.
\begin{description}
\item Case 2.3, $H=\beta^4$: We have the third of Figure \ref{2a2bcF2}. The angle sums of $
\alpha\delta\epsilon,\gamma^3,\beta^4$ and the angle sum for pentagon imply
\[
f=24\colon 
\alpha+\delta+\epsilon=2\pi,\;
\beta=\tfrac{1}{2}\pi,\;
\gamma=\tfrac{2}{3}\pi. 
\]
\item Case 2.3, $H=\beta^5$: We have the third of Figure \ref{2a2bcF2}. The angle sums of $
\alpha\delta\epsilon,\gamma^3,\beta^5$ and the angle sum for pentagon imply
\[
f=60\colon
\alpha+\delta+\epsilon=2\pi,\;
\beta=\tfrac{2}{5}\pi,\;
\gamma=\tfrac{2}{3}\pi. 
\]
\item Case 3.3, $H=\delta^4$: We have the third of Figure \ref{2a2bcF4}. The angle sums of $
\alpha^2\gamma,\beta^3,\gamma\epsilon^2,\delta^4$ and the angle sum for pentagon imply
\[
f=24\colon
\alpha=\epsilon=\pi-\tfrac{1}{2}\gamma,\;
\beta=\tfrac{2}{3}\pi,\;
\delta=\tfrac{1}{4}\pi.
\]
\end{description}
We also recall that there is no $3^5$-tile. By Lemma \ref{base_tile2}, for $f=24$, this means each tile is a $3^44$-tile. 

\subsubsection*{Case 3.3. $H=\delta^4$}

By the edge length consideration, we have $H=\delta^4=\thick\delta\thin\delta\thick\delta\thin\delta\thick$. The four tiles $T_1,T_2,T_3,T_4$ around a vertex $\delta^4$ is given by the first of Figure \ref{2a2bcF7}. Since every tile is a $3^44$-tile, all vertices of the four tiles except the central $\delta^4$ have degree $3$. In particular, we have tiles $T_5,T_6,T_7$ as indicated, with their edges (those not shared with $T_1,T_2,T_3$) and angles to be determined. Up to the symmetry of vertical flip, we may assume that the edges of $T_5$ are given as indicated. Then $E_{26}=b$ and $E_{56}=c$ determine $T_6$. Then $T_7$ has $3$ $b$-edges, a contradiction. Therefore the case has no tiling.

\begin{figure}[htp]
\centering
\begin{tikzpicture}[>=latex,scale=1]


\foreach \a in {1,-1}
{
\begin{scope}[yscale=\a]

\draw
	(-1.2,0.6) -- (-0.8,0) -- (0.8,0) -- (1.2,0.6);

\draw[line width=1.5]
	(1.2,0.6) -- (0.6,1.2) -- (0,0.8) -- (-0.6,1.2) -- (-1.2,0.6);

\draw[dashed]
	(0,0) -- (0,0.8);

\node at (0.2,-0.2) {\small $\delta$};
\node at (-0.2,-0.2) {\small $\delta$};

\end{scope}
}

\draw
	(1.8,0.6) -- (1.8,1.8) -- (0.6,1.8);
	
\draw[line width=1.5]
	(1.2,-0.6) -- (1.8,-0.6) -- (1.8,0.6)
	(0.6,1.2) -- (0.6,1.8);

\draw[dashed]
	(1.2,0.6) -- (1.8,0.6);

\draw[dotted]
	(0.6,1.8) -- (-0.6,1.8) -- (-0.6,1.2);

\node[inner sep=1,draw,shape=circle] at (-0.55,0.55) {\small $1$};
\node[inner sep=1,draw,shape=circle] at (0.55,0.55) {\small $2$};
\node[inner sep=1,draw,shape=circle] at (0.55,-0.55) {\small $3$};
\node[inner sep=1,draw,shape=circle] at (-0.55,-0.55) {\small $4$};
\node[inner sep=1,draw,shape=circle] at (1.45,0) {\small $5$};
\node[inner sep=1,draw,shape=circle] at (1.3,1.3) {\small $6$};
\node[inner sep=1,draw,shape=circle] at (0,1.45) {\small $7$};


\begin{scope}[xshift=4cm]

\foreach \a in {1,-1}
\foreach \b in {1,-1}
{
\begin{scope}[yscale=\a,xscale=\b]

\draw
	(0.8,0) -- (0,0) -- (0,0.8);

\draw[line width=1.5]
	(0,0.8) -- (0.6,1.2) -- (1.2,0.6);

\draw[dashed]
	(0.8,0) -- (1.2,0.6);

\node at (0.2,0.2) {\small $\beta$};

\end{scope}
}

\node at (1,0) {?};

\end{scope}


\begin{scope}[xshift=8cm]

\foreach \a in {-1,0,1}
{
\begin{scope}[rotate=90*\a]

\draw
	(0,0) -- (0.8,0);

\draw[line width=1.5]
	(1.2,0.6) -- (0.6,1.2) -- (0,0.8);

\draw[dashed]
	(0.8,0) -- (1.2,0.6);

\end{scope}
}

\foreach \a in {0,1}
{
\begin{scope}[rotate=90*\a]

\draw
	(1.2,0.6) -- (1.8,0.6)
	(1.8,-0.6) -- (1.8,1.8);

\draw[line width=1.5]
	(0.6,1.2) -- (0.6,1.8);

\draw[dashed]
	(0.6,1.8) -- (1.8,1.8);

\node at (0.7,0.2) {\small $\delta$}; 
\node at (1,0.6) {\small $\epsilon$};
\node at (0.6,0.95) {\small $\gamma$};
\node at (0.2,0.7) {\small $\alpha$};
\node at (0.2,-0.2) {\small $\beta$};

\node at (1.3,0.8) {\small $\alpha$}; 
\node at (1.6,0.8) {\small $\beta$};
\node at (1.6,1.6) {\small $\delta$};
\node at (0.8,1.6) {\small $\epsilon$};
\node at (0.8,1.3) {\small $\gamma$};

\node at (1,0) {\small $\epsilon$}; 
\node at (1.3,-0.4) {\small $\gamma$};
\node at (1.3,0.4) {\small $\delta$};
\node at (1.6,-0.4) {\small $\alpha$};
\node at (1.6,0.4) {\small $\beta$};

\end{scope}
}

\draw
	(0,0) -- (-0.8,0)
	(-0.6,1.8) -- (-0.6,2.5);

\draw[line width=1.5]
	(1.2,-0.6) -- (1.8,-0.6)
	(-0.6,2.5) -- (1.8,2.5) -- (1.8,1.8);

\draw[dotted]
	(-0.8,0) -- (-1.2,-0.6) -- (-0.6,-1.2) -- (0,-0.8);

\node at (-0.4,2) {\small $\beta$}; 
\node at (1.6,2) {\small $\epsilon$};
\node at (1.6,2.3) {\small $\gamma$};
\node at (0.6,2) {\small $\delta$};
\node at (-0.4,2.3) {\small $\alpha$};

\node at (-0.2,0.2) {\small $\beta$};
\node at (-0.2,-0.2) {\small $\beta$};
\node at (-0.8,2) {\small $\beta$};

\node[inner sep=1,draw,shape=circle] at (-0.55,0.55) {\small $1$};
\node[inner sep=1,draw,shape=circle] at (0.55,0.55) {\small $2$};
\node[inner sep=1,draw,shape=circle] at (0.55,-0.55) {\small $3$};
\node[inner sep=1,draw,shape=circle] at (-0.55,-0.55) {\small $4$};
\node[inner sep=1,draw,shape=circle] at (0,1.45) {\small $5$};
\node[inner sep=1,draw,shape=circle] at (-1.3,1.3) {\small $6$};
\node[inner sep=1,draw,shape=circle] at (1.3,1.3) {\small $7$};
\node[inner sep=1,draw,shape=circle] at (1.45,0) {\small $8$};
\node[inner sep=1,draw,shape=circle] at (0,2.15) {\small $9$};

\end{scope}

\end{tikzpicture}
\caption{Case 3.3, $H=\delta^4$, and Case 2.3, $H=\beta^4$.}
\label{2a2bcF7}
\end{figure}

\subsubsection*{Case 2.3. $H=\beta^4$}

We consider the four tiles around $\beta^4$. First, we assume that adjacent tiles always have opposite orientations (in the sense of the direction of edges $a\to a\to b\to b\to c$). Then we get the second of Figure \ref{2a2bcF7}. Since every tile is a $3^44$-tile, the vertex labeled by the question mark has degree $3$. This implies that there is a $c^2$-angle, a contradiction. Therefore there are two adjacent tiles around $\beta^4$ with the same orientation, say $T_1,T_2$ in the third of Figure \ref{2a2bcF7}. We may assume that the edges and angles of $T_1,T_2$ are given as indicated. Since every tile is a $3^44$-tile, we also have tiles $T_3,T_4,\dots,T_8$, with their edges and angles to be determined.

By $E_{15}=c$ and $E_{25}=b$, we determine $T_5$. By $E_{16}=b$ and $E_{56}=a$, we determine $T_6$. Since $T_7$ already has two $b$-edges, and $E_{28}=c$, we have $E_{78}=a$. This determines $T_7,T_8$. Then $E_{23}=a$ and $E_{38}=b$ determine $T_3$. This shows that, by starting with $T_1,T_2$, we can determine $T_3$. By repeating with $T_2,T_3$ in place of $T_1,T_2$, we can further determine $T_4$. We conclude that the neighbourhood of $\beta^4$ is uniquely given by four tiles with the same orientation.

If $\beta_5\beta_6\cdots=\beta^2\cdots$ has degree $3$, then by the edge length consideration, it is $\beta^3$, contradicting the fact that $\beta^4$ is already a vertex. Since every tile is a $3^44$-tile, the vertex $\beta_5\beta_6\cdots$ has degree $4$, and the vertex $\alpha_5\epsilon_7\cdots$ has degree $3$. Then $\alpha_5\epsilon_7\cdots=\alpha_5\delta_9\epsilon_7$ determines $T_9$. By the edge length consideration, the degree four vertex $\beta_5\beta_6\beta_9\cdots=\beta^4$.

We started with $\beta_1\beta_2\beta_3\beta_4=\beta^4$, and derived $\beta_5\beta_6\beta_9\cdots=\beta^4$. We may repeat the argument from $\beta_5\beta_6\beta_9\cdots$ in place of $\beta_1\beta_2\beta_3\beta_4$. After repeating six times, we get the pentagonal subdivision of the octahedron, in the second of Figure \ref{subdivision_tiling}.

\subsubsection*{Case 2.3. $H=\beta^5$}

After finishing Case 2.3, $H=\beta^4$, and Case 3.3, $H=\delta^4$, we may further assume that there is no $3^44$-tile. By Lemma \ref{base_tile3}, for $f=60$, we know each tile is a $3^45$-tile. 

Among the five tiles around $\beta^5$, there must be two adjacent tiles with the same orientation. Then the argument given by the third of Figure \ref{2a2bcF7} can be carried out, because the argument does not use $T_4$. The argument shows that all five tiles have the same orientation, and also derives another degree $5$ vertex $\beta_5\beta_6\beta_9\cdots$, with three tiles $T_5,T_6,T_9$ having the same orientation. By the edge length consideration, we have $\beta_5\beta_6\beta_9\cdots=\beta^5,\alpha^2\beta^3,\beta^3\delta^2$. We need to exclude $\alpha^2\beta^3$ and $\beta^3\delta^2$.

For $\alpha^2\beta^3$, the tiles $T_5,T_6,T_9$ give the AAD $\thin^{\alpha}\beta^{\delta}\thin^{\alpha}\beta^{\delta}\thin^{\beta}\alpha^{\gamma}\thick^{\gamma}\alpha^{\beta}\thin^{\alpha}\beta^{\delta}\thin$. The AAD gives a vertex $\thick\alpha^{\beta}\thin^{\alpha}\beta\thin\cdots$. Since $\alpha^2\beta^3$ and $\thick\alpha^{\beta}\thin^{\alpha}\beta\thin\cdots$ are vertices of the same tile, and each tile is a $3^45$-tile, we know $\thick\alpha^{\beta}\thin^{\alpha}\beta\thin\cdots$ has degree $3$. Then by the edge lengths, we get $\thick\alpha^{\beta}\thin^{\alpha}\beta\thin\cdots=\thin\alpha\thick\alpha\thin\beta\thin=\alpha^2\beta$. However, the vertices $\alpha^2\beta^3$ and $\alpha^2\beta$ are contradictory.

Similarly, the vertex $\beta^3\delta^2$ has the AAD $\thin^{\alpha}\beta^{\delta}\thin^{\alpha}\beta^{\delta}\thin^{\beta}\delta^{\epsilon}\thick^{\epsilon}\delta^{\beta}\thin^{\alpha}\beta^{\delta}\thin$. The AAD gives a vertex $\thin\beta^{\delta}\thin^{\beta}\delta\dash\cdots$. By $3^45$-tile, the vertex $\thin\beta^{\delta}\thin^{\beta}\delta\dash\cdots$ has degree $3$ and therefore is $\beta\delta^2$. This contradicts the vertex $\beta^3\delta^2$.

We conclude $\beta_5\beta_6\beta_9\cdots=\beta^5$. Then the process of deriving new $\beta_5\beta_6\beta_9\cdots=\beta^5$ from the original $\beta^5$ can be repeated. After repeating the process twelve times, we get the pentagonal subdivision of the icosahedron, in the third of Figure \ref{subdivision_tiling}.

\section{Tiling for Edge Combination $a^3bc$}
\label{3abc}

This section is devoted to the classification of edge-to-edge tilings of the sphere by congruent pentagons with the edge combination $a^3bc$. By Lemma \ref{edge_combo}, the pentagon is the second of Figure \ref{pentagon}. Similar to the edge combination $a^2b^2c$, we divide the classification into three cases in Figure \ref{3abc_pentagon}, according to the location of the vertex $H$ of a special tile (indicated by $\bullet$). In the first case, the degree of $H$ is $3,4$ or $5$. In the second and third cases, the degree of $H$ is $4$ or $5$. In other words, we do not need to consider $3^5$-tile in the second and third cases. By Lemma \ref{base_tile2}, we have $f\ge 24$ in the second and third cases. 

\begin{figure}[htp]
\centering
\begin{tikzpicture}[>=latex,scale=1]

\foreach \a in {1,2,3}
{
\begin{scope}[xshift=-2cm+2*\a cm]

\draw
	(18:0.8) -- (-54:0.8) -- (234:0.8) -- (162:0.8);

\draw[line width=1.5]
	(162:0.8) -- (90:0.8);

\draw[dashed]
	(18:0.8) -- (90:0.8);

\fill (18+72*\a:0.8) circle (0.1);

\node at (90:0.5) {\small $\alpha$};
\node at (162:0.5) {\small $\beta$};
\node at (18:0.5) {\small $\gamma$};
\node at (234:0.5) {\small $\delta$};
\node at (-54:0.5) {\small $\epsilon$};

\node at (0,0) {\a};

\end{scope}
}

\end{tikzpicture}
\caption{Special tile for the edge combination $a^3bc$.}
\label{3abc_pentagon}
\end{figure}

For the edge combination $a^3bc$, we will need \cite[Lemma 21]{gsy}. The special case relevant to us is reproduced below.

\begin{lemma}\label{geometry}
For the pentagon in the second of Figure \ref{pentagon}, we cannot have $\beta=\gamma$ and $\delta=\epsilon$ holding at the same time.
\end{lemma}

\subsection{Case 1$(a^3bc)$}
\label{3abcCase1}

Let the first of Figure \ref{3abc_pentagon} be the center tile $T_1$ in the partial neighbourhood in Figure \ref{3abcF1}. We consider whether $E_{23}=a$ or $E_{56}=a$. If both are not $a$, then we get the first picture. If one is $a$ and the other is not, by the symmetry of exchanging $b$ and $c$, we may assume $E_{23}=b$ and $E_{56}=a$, and get the second picture. If both are $a$, then we get the third picture. We label the three partial neighbourhood tilings as Cases 1.1, 1.2, 1.3.

\begin{figure}[htp]
\centering
\begin{tikzpicture}[>=latex]


\foreach \a in {0,1,2}
{
\begin{scope}[xshift=3.5*\a cm]

\fill (0,0.7) circle (0.1);

\foreach \x in {0,1,2} 
\draw[rotate=-72*\x]
	(18:0.7) -- (-54:0.7);
	
\draw[line width=1.5]
	(162:0.7) -- (90:0.7);
	
\draw[dashed]
	(18:0.7) -- (90:0.7);
	
\node at (90:0.45) {\small $\alpha$}; 
\node at (162:0.45) {\small $\beta$};
\node at (18:0.45) {\small $\gamma$};
\node at (234:0.45) {\small $\delta$};
\node at (-54:0.45) {\small $\epsilon$};

\node[draw,shape=circle, inner sep=0.5] at (0,0) {\small $1$};
\node[draw,shape=circle, inner sep=0.5] at (45:1.05) {\small $2$};
\node[draw,shape=circle, inner sep=0.5] at (-18:1.05) {\small $3$};
\node[draw,shape=circle, inner sep=0.5] at (-90:1.05) {\small $4$};
\node[draw,shape=circle, inner sep=0.5] at (198:1.05) {\small $5$};
\node[draw,shape=circle, inner sep=0.5] at (135:1.05) {\small $6$};

\end{scope}
}


\draw
	(90:0.7) -- (60:1.5)
	(90:0.7) -- (60:1.5) -- (40:1.7) -- (18:1.3)
	(90:0.7) -- (120:1.5) -- (140:1.7) -- (162:1.3)
	(-18:1.7) -- (-54:1.3) -- (-54:0.7)
	(198:1.7) -- (234:1.3) -- (234:0.7);
	
\draw[line width=1.5]
	(162:0.7) -- (90:0.7)
	(18:0.7) -- (18:1.3)
	(162:1.3) -- (198:1.7);
	
\draw[dashed]
	(18:0.7) -- (90:0.7)
	(18:1.3) -- (-18:1.7)
	(162:0.7) -- (162:1.3);

\draw[dotted]
	(-54:1.3) -- (-90:1.7) -- (234:1.3);

\node at (70:0.75) {\small $\gamma$}; 
\node at (32:0.8) {\small $\alpha$};
\node at (26:1.2) {\small $\beta$};
\node at (39:1.5) {\small $\delta$};
\node at (57:1.35) {\small $\epsilon$};

\node at (2:0.75) {\small $\beta$}; 
\node at (-40:0.8) {\small $\delta$};
\node at (-44:1.15) {\small $\epsilon$};
\node at (-18:1.45) {\small $\gamma$};
\node at (10:1.15) {\small $\alpha$};	

\node at (-90:1.45) {\small $\alpha$}; 

\node at (176:0.8) {\small $\gamma$}; 
\node at (220:0.8) {\small $\epsilon$};
\node at (224:1.15) {\small $\delta$};
\node at (198:1.45) {\small $\beta$};
\node at (170:1.15) {\small $\alpha$};

\node at (115:0.75) {\small $\beta$}; 
\node at (148:0.8) {\small $\alpha$};
\node at (154:1.2) {\small $\gamma$};
\node at (140:1.5) {\small $\epsilon$};
\node at (125:1.35) {\small $\delta$};


\begin{scope}[xshift=3.5cm]

\draw
	(90:0.7) -- (60:1.5)
	(90:0.7) -- (60:1.5) -- (40:1.7) -- (18:1.3)
	(120:1.5) -- (140:1.7) -- (162:1.3) -- (162:0.7)
	(-18:1.7) -- (-54:1.3) -- (-54:0.7);
	
\draw[line width=1.5]
	(162:0.7) -- (90:0.7)
	(18:0.7) -- (18:1.3);
	
\draw[dashed]
	(18:0.7) -- (90:0.7)
	(18:1.3) -- (-18:1.7)
	(90:0.7) -- (120:1.5);
	
\draw[dotted]
	(234:1.3) -- (234:0.7)
	(-54:1.3) -- (-90:1.7) -- (234:1.3) -- (198:1.7) -- (162:1.3);

\node at (70:0.75) {\small $\gamma$}; 
\node at (32:0.8) {\small $\alpha$};
\node at (26:1.2) {\small $\beta$};
\node at (39:1.5) {\small $\delta$};
\node at (57:1.35) {\small $\epsilon$};

\node at (2:0.75) {\small $\beta$}; 
\node at (-40:0.8) {\small $\delta$};
\node at (-44:1.15) {\small $\epsilon$};
\node at (-18:1.45) {\small $\gamma$};
\node at (10:1.15) {\small $\alpha$};	

\node at (112:0.75) {\small $\alpha$}; 
\node at (146:0.75) {\small $\beta$};
\node at (152:1.2) {\small $\delta$};
\node at (140:1.5) {\small $\epsilon$};
\node at (125:1.35) {\small $\gamma$};

\end{scope}


\begin{scope}[xshift=7cm]

\draw
	(90:0.7) -- (60:1.5)
	(60:1.5) -- (40:1.7) -- (18:1.3) -- (18:0.7)
	(120:1.5) -- (140:1.7) -- (162:1.3) -- (162:0.7);
	
\draw[line width=1.5]
	(162:0.7) -- (90:0.7)
	(90:0.7) -- (60:1.5);
	
\draw[dashed]
	(18:0.7) -- (90:0.7)
	(90:0.7) -- (120:1.5);

\draw[dotted]
	(-54:1.3) -- (-54:0.7)
	(234:1.3) -- (234:0.7)
	(18:1.3) -- (-18:1.7) -- (-54:1.3) -- (-90:1.7) -- (234:1.3) -- (198:1.7) -- (162:1.3);

\node at (70:0.75) {\small $\alpha$}; 
\node at (32:0.8) {\small $\gamma$};
\node at (26:1.2) {\small $\epsilon$};
\node at (39:1.5) {\small $\delta$};
\node at (55:1.3) {\small $\beta$};

\node at (112:0.75) {\small $\alpha$}; 
\node at (146:0.75) {\small $\beta$};
\node at (152:1.2) {\small $\delta$};
\node at (140:1.5) {\small $\epsilon$};
\node at (125:1.35) {\small $\gamma$};

\end{scope}


\foreach \a in {1,2,3}
\node[xshift=-3.5 cm+3.5*\a cm] at (-54:1.8) {1.\a};

\end{tikzpicture}
\caption{Case 1 for $a^3bc$.}
\label{3abcF1}
\end{figure}

In Case 1.1, by the vertex $\alpha_2\beta_3\gamma_1=\alpha\beta\gamma$, we know $H=\alpha_1\beta_6\gamma_2\cdots=\alpha\beta\gamma$ has degree $3$. Therefore $T_1$ is a $3^5$-tile. Moreover, by the edge length consideration, $T_1$ cannot be a $3^5$-tile in Cases 1.2 and 1.3. Therefore we may assume there is no $3^5$-tile after Case 1.1. By Lemma \ref{base_tile2}, we have $f\ge 24$ in Cases 1.2 and 1.3. 

\subsubsection*{Case 1.1}

The three $a$-edges of $T_4$ imply one of $\delta_1\epsilon_5\cdots$ and $\delta_3\epsilon_1\cdots$ is $\delta^2\epsilon$, and the other is $\delta\epsilon^2$. Then the angle sums of $\alpha\beta\gamma,\delta^2\epsilon,\delta\epsilon^2$ and the angle sum for pentagon imply $f=12$. By $f\ge 16$, we may dismiss the case.

\subsubsection*{Case 1.2}

By the edge length consideration, we have $H=\alpha^3\beta\gamma,\alpha^2\gamma^2,\alpha^2\gamma^2\delta,\alpha^2\gamma^2\epsilon$. By $\alpha_2\beta_3\gamma_1$, we have $H\ne \alpha^3\beta\gamma$. We will further show that $\alpha+\gamma>\pi$, so that $H\ne \alpha^2\gamma^2\cdots$.

Figure \ref{3abcF12} divides into further cases according to the edge $E_{45}$. If $E_{45}=a$, then $\delta_1\cdots$ and $\epsilon_1\cdots$ are combinations of $\delta,\epsilon$. If the combinations are different, then the angle sums of $\delta_1\cdots$ and $\epsilon_1\cdots$ imply $\delta=\epsilon=\frac{2}{3}\pi$. Combined with the angle sum of $\alpha\beta\gamma$ and the angle sum for pentagon, we get $f=12$, contradicting $f\ge 24$.  Therefore $\delta_1\cdots$ and $\epsilon_1\cdots$ have the same angle combination. This implies $\delta_1\cdots=\delta_1\delta_5\epsilon_4$ and $\epsilon_1\cdots=\delta_3\delta_4\epsilon_1$. Then we determine $T_4,T_5$, and get the first of Figure \ref{3abcF12}. If $E_{45}=b$ or $c$, then we may determine $T_4,T_5$ and get the second and third of Figure \ref{3abcF12}. The angle sums of the four degree $3$ vertices and the angle sum for pentagon imply
\begin{align*}
E_{45}=a
&\colon
\alpha+\gamma=(1+\tfrac{4}{f})\pi,\;
\beta=\delta=(1-\tfrac{4}{f})\pi,\;
\epsilon=\tfrac{8}{f}\pi; \\
E_{45}=b
&\colon
\alpha+\gamma=(\tfrac{3}{2}-\tfrac{2}{f})\pi,\;
\beta=(\tfrac{1}{2}+\tfrac{2}{f})\pi,\;
\delta=(1-\tfrac{4}{f})\pi,\;
\epsilon=\tfrac{8}{f}\pi; \\
E_{45}=c
&\colon
\alpha=\beta=(\tfrac{1}{2}+\tfrac{2}{f})\pi,\;
\gamma=\epsilon=(1-\tfrac{4}{f})\pi,\;
\delta=\tfrac{8}{f}\pi.
\end{align*}
In all cases, we have $\alpha+\gamma>\pi$.

\begin{figure}[htp]
\centering
\begin{tikzpicture}[>=latex]


\foreach \a in {0,1,2}
{
\begin{scope}[xshift=3.5*\a cm]

\fill (0,0.7) circle (0.1);

\foreach \x in {0,1,2} 
\draw[rotate=-72*\x]
	(18:0.7) -- (-54:0.7);

\draw
	(90:0.7) -- (60:1.5)
	(90:0.7) -- (60:1.5) -- (40:1.7) -- (18:1.3)
	(120:1.5) -- (140:1.7) -- (162:1.3) -- (162:0.7)
	(-18:1.7) -- (-54:1.3) -- (-54:0.7);
	
\draw[line width=1.5]
	(162:0.7) -- (90:0.7)
	(162:0.7) -- (90:0.7)
	(18:0.7) -- (18:1.3);
	
\draw[dashed]
	(18:0.7) -- (90:0.7)
	(18:0.7) -- (90:0.7)
	(18:1.3) -- (-18:1.7)
	(90:0.7) -- (120:1.5);

\node at (90:0.45) {\small $\alpha$}; 
\node at (162:0.45) {\small $\beta$};
\node at (18:0.45) {\small $\gamma$};
\node at (234:0.45) {\small $\delta$};
\node at (-54:0.45) {\small $\epsilon$};

\node at (70:0.75) {\small $\gamma$}; 
\node at (32:0.8) {\small $\alpha$};
\node at (26:1.2) {\small $\beta$};
\node at (39:1.5) {\small $\delta$};
\node at (57:1.35) {\small $\epsilon$};

\node at (2:0.75) {\small $\beta$}; 
\node at (-40:0.8) {\small $\delta$};
\node at (-44:1.15) {\small $\epsilon$};
\node at (-18:1.45) {\small $\gamma$};
\node at (10:1.15) {\small $\alpha$};	

\node at (112:0.75) {\small $\alpha$}; 
\node at (146:0.75) {\small $\beta$};
\node at (152:1.2) {\small $\delta$};
\node at (140:1.5) {\small $\epsilon$};
\node at (125:1.35) {\small $\gamma$};

\node[draw,shape=circle, inner sep=0.5] at (0,0) {\small $1$};
\node[draw,shape=circle, inner sep=0.5] at (45:1.05) {\small $2$};
\node[draw,shape=circle, inner sep=0.5] at (-18:1.05) {\small $3$};
\node[draw,shape=circle, inner sep=0.5] at (-90:1.05) {\small $4$};
\node[draw,shape=circle, inner sep=0.5] at (198:1.05) {\small $5$};
\node[draw,shape=circle, inner sep=0.5] at (135:1.05) {\small $6$};

\end{scope}
}


\draw
	(234:1.3) -- (234:0.7);
	
\draw[dashed]
	(-54:1.3) -- (-90:1.7) 
	(198:1.7) -- (162:1.3);

\draw[line width=1.5]
	(-90:1.7) -- (234:1.3) -- (198:1.7);

\node at (-66:0.8) {\small $\delta$}; 
\node at (-114:0.8) {\small $\epsilon$};
\node at (-116:1.15) {\small $\gamma$};
\node at (-90:1.45) {\small $\alpha$};
\node at (-64:1.15) {\small $\beta$};		

\node at (176:0.8) {\small $\epsilon$}; 
\node at (220:0.8) {\small $\delta$};
\node at (224:1.15) {\small $\beta$};
\node at (198:1.45) {\small $\alpha$};
\node at (172:1.15) {\small $\gamma$};
	


\begin{scope}[xshift=3.5cm]

\draw[line width=1.5]
	(234:1.3) -- (234:0.7);

\draw[dashed]
	(-90:1.7) -- (234:1.3) -- (198:1.7);

\draw
	(-54:1.3) -- (-90:1.7)
	(198:1.7) -- (162:1.3);

\node at (-66:0.8) {\small $\delta$}; 
\node at (-114:0.85) {\small $\beta$};
\node at (-116:1.15) {\small $\alpha$};
\node at (-90:1.45) {\small $\gamma$};
\node at (-62:1.15) {\small $\epsilon$};		

\node at (176:0.8) {\small $\delta$}; 
\node at (220:0.8) {\small $\beta$};
\node at (224:1.15) {\small $\alpha$};
\node at (198:1.45) {\small $\gamma$};
\node at (170:1.2) {\small $\epsilon$};


\end{scope}


\begin{scope}[xshift=7cm]

\draw[dashed]
	(234:1.3) -- (234:0.7);

\draw[line width=1.5]
	(-90:1.7) -- (234:1.3) -- (198:1.7);

\draw
	(-54:1.3) -- (-90:1.7)
	(198:1.7) -- (162:1.3);

\node at (-66:0.8) {\small $\epsilon$}; 
\node at (-114:0.85) {\small $\gamma$};
\node at (-116:1.15) {\small $\alpha$};
\node at (-90:1.45) {\small $\beta$};
\node at (-62:1.15) {\small $\delta$};		

\node at (176:0.8) {\small $\epsilon$}; 
\node at (220:0.8) {\small $\gamma$};
\node at (224:1.15) {\small $\alpha$};
\node at (198:1.45) {\small $\beta$};
\node at (170:1.2) {\small $\delta$};


\end{scope}

\end{tikzpicture}
\caption{Case 1.2 for $a^3bc$.}
\label{3abcF12}
\end{figure}

\subsubsection*{Case 1.3}

By the edge length consideration, we have $H=\alpha^4,\alpha^3\beta\gamma$. Moreover, since $b$ and $c$ of $T_4$ are adjacent, one of $E_{34}$ and $E_{45}$ is $a$. By the symmetry of exchanging $b$ and $c$, we may further assume that $E_{45}=a$. Then we consider three possibilities for $E_{34}$.

\subsubsection*{Subcase. $E_{34}=a$}

Figure \ref{3abcF13a} shows the case $E_{34}=a$. This implies $\delta_1\cdots$ and $\epsilon_1\cdots$ are combinations of $\delta,\epsilon$. 

If $\delta=\epsilon$, then the angle sum of $\delta_1\cdots$ implies $\delta=\epsilon=\frac{2}{3}\pi$. Then the angle sums of $\beta_1\beta_6\cdots$ and $\gamma_1\gamma_2\cdots$ imply $\beta=\gamma=\frac{2}{3}\pi$, contradicting Lemma \ref{geometry}. 

Therefore $\delta\ne \epsilon$. This implies $A_{3,12}=A_{5,16}$, $A_{3,14}=A_{5,14}$, and $\delta_1\cdots$ and $\epsilon_1\cdots$ have the same angle combination. This further implies $A_{4,13}=\delta,A_{4,15}=\epsilon$. Then we determine $T_4$ and get two pictures according to $A_{3,12}=A_{5,16}=\delta$ or $\epsilon$.

\begin{figure}[htp]
\centering
\begin{tikzpicture}[>=latex]


\foreach \a in {0,1}
{
\begin{scope}[xshift=3.5*\a cm]

\fill (0,0.7) circle (0.1);

\foreach \x in {0,1,2} 
\draw[rotate=-72*\x]
	(18:0.7) -- (-54:0.7);

\draw
	(90:0.7) -- (60:1.5)
	(60:1.5) -- (40:1.7) -- (18:1.3) -- (18:0.7)
	(120:1.5) -- (140:1.7) -- (162:1.3) -- (162:0.7)
	(234:0.7) -- (234:1.3)
	(-54:1.3) -- (-54:0.7);
	
\draw[line width=1.5]
	(162:0.7) -- (90:0.7)
	(90:0.7) -- (60:1.5)
	(-54:1.3) -- (-90:1.7);
	
\draw[dashed]
	(18:0.7) -- (90:0.7)
	(90:0.7) -- (120:1.5)
	(-90:1.7) -- (234:1.3);
	
\node at (90:0.45) {\small $\alpha$}; 
\node at (162:0.45) {\small $\beta$};
\node at (18:0.45) {\small $\gamma$};
\node at (234:0.45) {\small $\delta$};
\node at (-54:0.45) {\small $\epsilon$};

\node at (70:0.75) {\small $\alpha$}; 
\node at (32:0.8) {\small $\gamma$};
\node at (26:1.2) {\small $\epsilon$};
\node at (39:1.5) {\small $\delta$};
\node at (55:1.3) {\small $\beta$};

\node at (-68:0.8) {\small $\delta$}; 
\node at (-116:0.8) {\small $\epsilon$};
\node at (-118:1.15) {\small $\gamma$};
\node at (-90:1.45) {\small $\alpha$};
\node at (-64:1.2) {\small $\beta$};

\node at (112:0.75) {\small $\alpha$}; 
\node at (146:0.75) {\small $\beta$};
\node at (152:1.2) {\small $\delta$};
\node at (140:1.5) {\small $\epsilon$};
\node at (125:1.35) {\small $\gamma$};

\node[draw,shape=circle, inner sep=0.5] at (0,0) {\small $1$};
\node[draw,shape=circle, inner sep=0.5] at (45:1.05) {\small $2$};
\node[draw,shape=circle, inner sep=0.5] at (-18:1.05) {\small $3$};
\node[draw,shape=circle, inner sep=0.5] at (-90:1.05) {\small $4$};
\node[draw,shape=circle, inner sep=0.5] at (198:1.05) {\small $5$};
\node[draw,shape=circle, inner sep=0.5] at (135:1.05) {\small $6$};

\end{scope}
}


\draw[line width=1.5]
	(18:1.3) -- (-18:1.7)
	(198:1.7) -- (162:1.3);

\draw[dashed]
	(-18:1.7) -- (-54:1.3)
	(234:1.3) -- (198:1.7);
		
\node at (4:0.75) {\small $\delta$}; 
\node at (-40:0.75) {\small $\epsilon$};
\node at (-46:1.15) {\small $\gamma$};
\node at (-18:1.45) {\small $\alpha$};
\node at (8:1.1) {\small $\beta$};	

\node at (176:0.75) {\small $\delta$}; 
\node at (220:0.75) {\small $\epsilon$};
\node at (226:1.15) {\small $\gamma$};
\node at (198:1.45) {\small $\alpha$};
\node at (172:1.2) {\small $\beta$};


\begin{scope}[xshift=3.5cm]

\draw[dashed]
	(18:1.3) -- (-18:1.7)
	(198:1.7) -- (162:1.3);

\draw[line width=1.5]
	(-18:1.7) -- (-54:1.3)
	(234:1.3) -- (198:1.7);
		
\node at (6:0.75) {\small $\epsilon$}; 
\node at (-40:0.75) {\small $\delta$};
\node at (-44:1.15) {\small $\beta$};
\node at (-18:1.45) {\small $\alpha$};
\node at (8:1.15) {\small $\gamma$};	

\node at (174:0.75) {\small $\epsilon$}; 
\node at (220:0.75) {\small $\delta$};
\node at (226:1.15) {\small $\beta$};
\node at (198:1.45) {\small $\alpha$};
\node at (170:1.15) {\small $\gamma$};

\end{scope}

\end{tikzpicture}
\caption{Case 1.3 for $a^3bc$, $E_{34}=a$, $E_{45}=a$.}
\label{3abcF13a}
\end{figure}

In the first of Figure \ref{3abcF13a}, the angle sums of $\beta^2\delta,\gamma^2\delta,\delta\epsilon^2$ and the angle sum for pentagon imply 
\[
\alpha=\tfrac{4}{f}\pi+\tfrac{1}{2}\delta,\;
\beta=\gamma=\epsilon=\pi-\tfrac{1}{2}\delta.
\]
By $2\alpha+\beta+\gamma>2\pi$, we get $H\ne\alpha^3\beta\gamma$. Then the angle sum of $H=\alpha^4$ further implies 
\[
\alpha=\tfrac{1}{2}\pi,\;
\beta=\gamma=\epsilon=(\tfrac{1}{2}+\tfrac{4}{f})\pi,\;
\delta=(1-\tfrac{8}{f})\pi.
\]
By the edge length consideration, we have $\gamma_4\gamma_5\cdots=\theta\dash\gamma\thin\gamma\dash\rho\cdots$, where $\theta,\rho=\alpha,\gamma$ are two angles. By $\gamma>\alpha=\frac{1}{2}\pi$, the angle sum of the vertex is $>2\pi$, a contradiction. 

By the similar argument, the second of Figure \ref{3abcF13a} leads to a similar contradiction at $\beta_3\beta_4\cdots$. 

\subsubsection*{Subcase. $E_{34}=b$}

Figure \ref{3abcF13b} shows the case $E_{34}=b$. This determines $T_3,T_4$. Then we get two pictures according to the two possible arrangements of $T_5$.

\begin{figure}[htp]
\centering
\begin{tikzpicture}[>=latex]


\foreach \a in {0,1}
{
\begin{scope}[xshift=3.5*\a cm]

\fill (0,0.7) circle (0.1);

\foreach \x in {0,1,2} 
\draw[rotate=-72*\x]
	(18:0.7) -- (-54:0.7);

\draw
	(90:0.7) -- (60:1.5)
	(60:1.5) -- (40:1.7) -- (18:1.3) -- (18:0.7)
	(120:1.5) -- (140:1.7) -- (162:1.3) -- (162:0.7)
	(234:0.7) -- (234:1.3) -- (-90:1.7)
	(18:1.3) -- (-18:1.7);
	
\draw[line width=1.5]
	(162:0.7) -- (90:0.7)
	(-54:1.3) -- (-54:0.7)
	(90:0.7) -- (60:1.5);
	
\draw[dashed]
	(18:0.7) -- (90:0.7)
	(-18:1.7) -- (-54:1.3) -- (-90:1.7)
	(90:0.7) -- (120:1.5);
	
\node at (90:0.45) {\small $\alpha$}; 
\node at (162:0.45) {\small $\beta$};
\node at (18:0.45) {\small $\gamma$};
\node at (234:0.45) {\small $\delta$};
\node at (-54:0.45) {\small $\epsilon$};

\node at (70:0.75) {\small $\alpha$}; 
\node at (32:0.8) {\small $\gamma$};
\node at (26:1.2) {\small $\epsilon$};
\node at (39:1.5) {\small $\delta$};
\node at (55:1.3) {\small $\beta$};

\node at (4:0.75) {\small $\delta$}; 
\node at (-40:0.8) {\small $\beta$};
\node at (-44:1.15) {\small $\alpha$};
\node at (-18:1.45) {\small $\gamma$};
\node at (10:1.15) {\small $\epsilon$};	

\node at (-68:0.85) {\small $\beta$}; 
\node at (-116:0.8) {\small $\delta$};
\node at (-118:1.15) {\small $\epsilon$};
\node at (-90:1.45) {\small $\gamma$};
\node at (-64:1.15) {\small $\alpha$};	

\node at (112:0.75) {\small $\alpha$}; 
\node at (146:0.75) {\small $\beta$};
\node at (152:1.2) {\small $\delta$};
\node at (140:1.5) {\small $\epsilon$};
\node at (126:1.35) {\small $\gamma$};

\node[draw,shape=circle, inner sep=0.5] at (0,0) {\small $1$};
\node[draw,shape=circle, inner sep=0.5] at (45:1.05) {\small $2$};
\node[draw,shape=circle, inner sep=0.5] at (-18:1.05) {\small $3$};
\node[draw,shape=circle, inner sep=0.5] at (-90:1.05) {\small $4$};
\node[draw,shape=circle, inner sep=0.5] at (198:1.05) {\small $5$};
\node[draw,shape=circle, inner sep=0.5] at (135:1.05) {\small $6$};

\end{scope}
}


\draw[line width=1.5]
	(162:1.3) -- (198:1.7);

\draw[dashed]
	(234:1.3) -- (198:1.7);

\node at (176:0.75) {\small $\delta$}; 
\node at (220:0.75) {\small $\epsilon$};
\node at (226:1.15) {\small $\gamma$};
\node at (198:1.45) {\small $\alpha$};
\node at (172:1.2) {\small $\beta$};


\begin{scope}[xshift=3.5cm]

\draw[dashed]
	(198:1.7) -- (162:1.3);

\draw[line width=1.5]
	(234:1.3) -- (198:1.7);

\node at (174:0.75) {\small $\epsilon$}; 
\node at (220:0.75) {\small $\delta$};
\node at (226:1.15) {\small $\beta$};
\node at (198:1.45) {\small $\alpha$};
\node at (170:1.15) {\small $\gamma$};		

\end{scope}

\end{tikzpicture}
\caption{Case 1.3 for $a^3bc$, $E_{34}=b$, $E_{45}=a$.}
\label{3abcF13b}
\end{figure}

In the first of Figure \ref{3abcF13b}, the angle sums of $\beta^2\delta,\beta^2\epsilon,\gamma^2\delta,\delta^2\epsilon$ imply $\beta=\gamma=\delta=\epsilon$, contradicting Lemma \ref{geometry}. 

In the second of Figure \ref{3abcF13b}, if $H=\alpha^3\beta\gamma$, then the angle sums of $\beta^2\epsilon,\gamma^2\delta,\delta^3,H$ and the angle sum for pentagon imply 
\[
\alpha=(\tfrac{1}{4}+\tfrac{1}{f})\pi,\;
\beta=(\tfrac{7}{12}-\tfrac{3}{f})\pi,\;
\gamma=\delta=\tfrac{2}{3}\pi,\;
\epsilon=(\tfrac{5}{6}+\tfrac{6}{f})\pi.
\]
Then by $R(\thin\epsilon_2\thin\epsilon_3\thin\cdots)=(\tfrac{1}{3}-\tfrac{12}{f})\pi<\beta,\gamma,\delta,\epsilon$, the remainder has only $\alpha$. Since $\alpha$ is a $bc$-angle, we get a contradiction.

We will discuss the remaining case $H=\alpha^4$ (for the second of Figure \ref{3abcF13b}) in Section \ref{pent_ddivision}.

\subsubsection*{Subcase. $E_{34}=c$}

Figure \ref{3abcF13c} shows the case $E_{34}=c$. We get two pictures according to the two possible arrangements  of $T_5$.

\begin{figure}[htp]
\centering
\begin{tikzpicture}[>=latex]


\foreach \a in {0,1}
{
\begin{scope}[xshift=3.5*\a cm]

\fill (0,0.7) circle (0.1);

\foreach \x in {0,1,2} 
\draw[rotate=-72*\x]
	(18:0.7) -- (-54:0.7);

\draw
	(90:0.7) -- (60:1.5)
	(60:1.5) -- (40:1.7) -- (18:1.3) -- (18:0.7)
	(120:1.5) -- (140:1.7) -- (162:1.3) -- (162:0.7)
	(234:0.7) -- (234:1.3) -- (-90:1.7)
	(18:1.3) -- (-18:1.7);
	
\draw[line width=1.5]
	(162:0.7) -- (90:0.7)
	(90:0.7) -- (60:1.5)
	(-18:1.7) -- (-54:1.3) -- (-90:1.7);
	
\draw[dashed]
	(18:0.7) -- (90:0.7)
	(-54:1.3) -- (-54:0.7)
	(90:0.7) -- (120:1.5);
	
\node at (90:0.45) {\small $\alpha$}; 
\node at (162:0.45) {\small $\beta$};
\node at (18:0.45) {\small $\gamma$};
\node at (234:0.45) {\small $\delta$};
\node at (-54:0.45) {\small $\epsilon$};

\node at (70:0.75) {\small $\alpha$}; 
\node at (32:0.8) {\small $\gamma$};
\node at (26:1.2) {\small $\epsilon$};
\node at (39:1.5) {\small $\delta$};
\node at (55:1.3) {\small $\beta$};

\node at (4:0.75) {\small $\epsilon$}; 
\node at (-40:0.8) {\small $\gamma$};
\node at (-44:1.15) {\small $\alpha$};
\node at (-18:1.45) {\small $\beta$};
\node at (10:1.15) {\small $\delta$};	

\node at (-68:0.8) {\small $\gamma$}; 
\node at (-114:0.8) {\small $\epsilon$};
\node at (-116:1.15) {\small $\delta$};
\node at (-90:1.45) {\small $\beta$};
\node at (-64:1.15) {\small $\alpha$};	

\node at (112:0.75) {\small $\alpha$}; 
\node at (146:0.75) {\small $\beta$};
\node at (152:1.2) {\small $\delta$};
\node at (140:1.5) {\small $\epsilon$};
\node at (125:1.35) {\small $\gamma$};

\node[draw,shape=circle, inner sep=0.5] at (0,0) {\small $1$};
\node[draw,shape=circle, inner sep=0.5] at (45:1.05) {\small $2$};
\node[draw,shape=circle, inner sep=0.5] at (-18:1.05) {\small $3$};
\node[draw,shape=circle, inner sep=0.5] at (-90:1.05) {\small $4$};
\node[draw,shape=circle, inner sep=0.5] at (198:1.05) {\small $5$};
\node[draw,shape=circle, inner sep=0.5] at (135:1.05) {\small $6$};

\end{scope}
}


\draw
	(-54:2.1) -- (-70:2.4) -- (-90:2.4) -- (-90:1.7)
	(234:1.3) -- (234:2.1) -- (223:2.6) -- (210:2.3);

\draw[line width=1.5]
	(162:1.3) -- (198:1.7) -- (210:2.3);

\draw[dashed]
	(234:1.3) -- (198:1.7)
	(-54:1.3) -- (-54:2.1)
	(198:1.7) -- (190:2);

\node at (176:0.75) {\small $\delta$}; 
\node at (220:0.75) {\small $\epsilon$};
\node at (226:1.15) {\small $\gamma$};
\node at (198:1.45) {\small $\alpha$};
\node at (172:1.2) {\small $\beta$};

\node at (-61:1.5) {\small $\alpha$}; 
\node at (-85:1.8) {\small $\beta$};
\node at (-60:2) {\small $\gamma$};
\node at (-86:2.2) {\small $\delta$};
\node at (-70:2.2) {\small $\epsilon$};

\node at (229:1.5) {\small $\gamma$}; 
\node at (206:1.7) {\small $\alpha$};
\node at (230:2) {\small $\epsilon$};
\node at (213:2.15) {\small $\beta$};
\node at (221:2.35) {\small $\delta$};

\node at (-47:1.5) {\small $\alpha$}; 
\node at (190:1.75) {\small $\alpha$};
\node at (197:1.95) {\small $\alpha$};
\node at (-95:1.8) {\small $\delta$};

\node[draw,shape=circle, inner sep=0.5] at (-70:1.85) {\small $7$};
\node[draw,shape=circle, inner sep=0.5] at (220:1.85) {\small $8$};


\begin{scope}[xshift=3.5cm]

\draw[dashed]
	(198:1.7) -- (162:1.3);

\draw[line width=1.5]
	(234:1.3) -- (198:1.7);

\node at (174:0.75) {\small $\epsilon$}; 
\node at (220:0.75) {\small $\delta$};
\node at (226:1.15) {\small $\beta$};
\node at (198:1.45) {\small $\alpha$};
\node at (170:1.15) {\small $\gamma$};		

\end{scope}

\end{tikzpicture}
\caption{Case 1.3 for $a^3bc$, $E_{34}=c$, $E_{45}=a$.}
\label{3abcF13c}
\end{figure}

In the first of Figure \ref{3abcF13c}, if $H=\alpha^3\beta\gamma$, then the angle sums of $\beta^2\delta,\gamma^2\epsilon,\delta\epsilon^2$, $H$ and the angle sum for pentagon imply 
\[
\alpha=(\tfrac{1}{4}+\tfrac{1}{f})\pi,\;
\beta=\epsilon=(\tfrac{1}{2}-\tfrac{6}{f})\pi,\;
\gamma=(\tfrac{3}{4}+\tfrac{3}{f})\pi,\;
\delta=(1+\tfrac{12}{f})\pi.
\]
By $\dash\gamma_5\thin\delta_4\thin\cdots=\thick\alpha\dash\gamma\thin\delta\thin\cdots$ or $\thin\gamma\dash\gamma\thin\delta\thin\cdots$, and $2\gamma+\delta>\alpha+\gamma+\delta>2\pi$, we get a contradiction. 

Therefore $H=\alpha^4$. The angle sums of $\beta^2\delta,\gamma^2\epsilon,\delta\epsilon^2,H$ and the angle sum for pentagon imply 
\[
\alpha=\tfrac{1}{2}\pi,\;
\beta=\epsilon=(1-\tfrac{8}{f})\pi,\;
\gamma=(\tfrac{1}{2}+\tfrac{4}{f})\pi,\;
\delta=\tfrac{16}{f}\pi.
\]
Since $f=24$ implies $\beta=\gamma=\delta=\epsilon$, contradicting Lemma \ref{geometry}, we have $f>24$.

If $\thick\alpha\dash$ has no more $\alpha$ on either side, then $\thick\alpha\dash\cdots=\beta\thick\alpha\dash\gamma\cdots$. By $0\ne R(\beta\thick\alpha\dash\gamma\cdots)=\tfrac{4}{f}\pi<$ all angles, this is a contradiction. Therefore $\alpha\cdots=\dash\alpha\thick\alpha\dash\cdots,\thick\alpha\dash\alpha\thick\cdots$. We further have $\dash\alpha\thick\alpha\dash\cdots=\theta\dash\alpha\thick\alpha\dash\rho\cdots$, where $\theta,\rho=\alpha,\gamma$ are two angles, and $\thick\alpha\dash\alpha\thick\cdots=\theta\thick\alpha\dash\alpha\thick\rho\cdots$, where $\theta,\rho=\alpha,\beta$ are two angles. Then by $\alpha=\tfrac{1}{2}\pi<\beta,\gamma$, we conclude $\alpha\cdots=\alpha^4$. By $\alpha_3\alpha_4\cdots=\alpha_5\cdots=\alpha^4$, we determine $T_7,T_8$. By $\alpha\cdots=\alpha^4$ and $R(\beta_4\beta_7\cdots)=\delta<\beta,\gamma,\epsilon$, we get $\beta_4\beta_7\cdots=\beta^2\delta$. By $f>24$, we have $\beta\ne\gamma$. Then $\beta^2\delta$ implies $\gamma^2\delta$ is not a vertex. Therefore $\gamma_5\gamma_8\delta_4\cdots=\gamma^2\delta\theta\cdots$, where $\theta=\beta,\epsilon$ is adjacent to $\delta$. By $2\gamma+\delta+\theta>2\pi$, we get a contradiction. 

In the second of Figure \ref{3abcF13c}, the angle sums of $\beta^2\epsilon,\gamma^2\epsilon,\delta^2\epsilon$ and the angle sum for pentagon imply 
\[
\alpha=\tfrac{4}{f}\pi+\tfrac{1}{2}\epsilon,\;
\beta=\gamma=\delta=\pi-\tfrac{1}{2}\epsilon.
\]
By $2\alpha+\beta+\gamma>2\pi$, we have $H\ne \alpha^3\beta\gamma$. Therefore $H=\alpha^4$, and the angle sum of $H$ further implies
\[
\alpha=\tfrac{1}{2}\pi,\;
\beta=\gamma=\delta=(\tfrac{1}{2}+\tfrac{4}{f})\pi,\;
\epsilon=(1-\tfrac{8}{f})\pi.
\]
By the edge length consideration, we have $\thick\beta_5\thin\delta_4\thin\cdots=\dash\alpha\thick\beta\thin\delta\thin\cdots$ or $\thin\beta\thick\beta\thin\delta\thin\cdots$. By $f>24$, we know both $R(\dash\alpha\thick\beta\thin\delta\thin\cdots)=(\frac{1}{2}-\frac{8}{f})\pi$ and $R(\thin\beta\thick\beta\thin\delta\thin\cdots)=(\frac{1}{2}-\frac{12}{f})\pi$ are nonzero and strictly less than all the angles. Therefore $\dash\alpha\thick\beta\thin\delta\thin\cdots$ or $\thin\beta\thick\beta\thin\delta\thin\cdots$ are not vertices. We get a contradiction.

\subsection{Case 2$(a^3bc)$}
\label{3abcCase2}

Let the second of Figure \ref{3abc_pentagon} be the center tile $T_1$ in the partial neighbourhood in the first of Figure \ref{3abcF3}. Since $E_{23}$ is adjacent to $E_{12}=b$ and $E_{13}=c$, we get $E_{23}=a$ and determine $T_2,T_3$. The angle sum of $\alpha_1\beta_2\gamma_3$ implies $H=\dash\alpha\thick\beta\thin\cdots$ has no $\gamma$. Therefore $H=\thick\alpha\dash\alpha\thick\beta\thin\cdots=\alpha^2\beta^2,\alpha^2\beta^2\delta,\alpha^2\beta^2\epsilon$. The angle sum of $H$ implies $\alpha+\beta\le\pi$. Then the angle sum of $\alpha\beta\gamma$ implies $\gamma\ge \pi$. Therefore $\gamma^2\cdots$ is not a vertex. On the other hand, the AAD $H=\thick^{\beta}\alpha^{\gamma}\dash^{\gamma}\alpha^{\beta}\thick\cdots$ implies $\gamma^2\cdots$ is a vertex. We get a contradiction.

\begin{figure}[htp]
\centering
\begin{tikzpicture}[>=latex]


\begin{scope}[xshift=-3.5cm]

\fill (0,0.7) circle (0.1);

\foreach \x in {1,2,3} 
\draw[rotate=72*\x]
	(18:0.7) -- (90:0.7);

\foreach \x in {0,1,2}
\draw[dotted,rotate=-72*\x]
	(18:0.7) -- (18:1.3) -- (-18:1.7) -- (-54:1.3) -- (-54:0.7);

\draw
	(18:0.7) -- (18:1.3) -- (40:1.7) -- (60:1.5)
	(18:1.3) -- (-18:1.7) -- (-54:1.3) ;

\draw[dotted]
	(90:0.7) -- (120:1.5) -- (140:1.7) -- (162:1.3);

\draw[line width=1.5]
	(90:0.7) -- (18:0.7)
	(-54:0.7) -- (-54:1.3);
	
\draw[dashed]
	(18:0.7) -- (-54:0.7)
	(90:0.7) -- (60:1.5);
	
\node at (90:0.4) {\small $\beta$}; 
\node at (162:0.45) {\small $\delta$};
\node at (18:0.45) {\small $\alpha$};
\node at (234:0.45) {\small $\epsilon$};
\node at (-54:0.45) {\small $\gamma$};

\node at (70:0.78) {\small $\alpha$}; 
\node at (32:0.8) {\small $\beta$};
\node at (28:1.2) {\small $\delta$};
\node at (42:1.5) {\small $\epsilon$};
\node at (57:1.35) {\small $\gamma$};

\node at (4:0.8) {\small $\gamma$}; 
\node at (-40:0.8) {\small $\alpha$};
\node at (-44:1.15) {\small $\beta$};
\node at (-18:1.45) {\small $\delta$};
\node at (10:1.15) {\small $\epsilon$};

\node[draw,shape=circle, inner sep=0.5] at (0,0) {\small $1$};
\node[draw,shape=circle, inner sep=0.5] at (45:1.15) {\small $2$};\node[draw,shape=circle, inner sep=0.5] at (-18:1) {\small $3$};

\end{scope}


\fill (0,0.7) circle (0.1);

\foreach \x in {0,1,2} 
\draw[rotate=72*\x]
	(18:0.7) -- (90:0.7);

\draw
	(90:0.7) -- (60:1.5)
	(90:0.7) -- (60:1.5) -- (40:1.7)
	(18:1.3) -- (-18:1.7) -- (-54:1.3) -- (-90:1.7) -- (234:1.3)
	(-54:0.7) -- (-54:1.3)
	(162:0.7) -- (162:1.3) -- (198:1.7);
	
\draw[line width=1.5]
	(-54:0.7) -- (18:0.7)
	(234:0.7) -- (234:1.3)
	(40:1.7) -- (18:1.3);
	
\draw[dashed]
	(-54:0.7) -- (234:0.7)
	(18:0.7) -- (18:1.3)
	(198:1.7) -- (234:1.3);

\draw[dotted]
	(90:0.7) -- (120:1.5) -- (140:1.7) -- (162:1.3);
	
\node at (90:0.45) {\small $\delta$}; 
\node at (162:0.45) {\small $\epsilon$};
\node at (18:0.45) {\small $\beta$};
\node at (234:0.45) {\small $\gamma$};
\node at (-54:0.45) {\small $\alpha$};

\node at (70:0.78) {\small $\epsilon$}; 
\node at (32:0.8) {\small $\gamma$};
\node at (28:1.2) {\small $\alpha$};
\node at (40:1.45) {\small $\beta$};
\node at (57:1.3) {\small $\delta$};

\node at (4:0.8) {\small $\alpha$}; 
\node at (-40:0.8) {\small $\beta$};
\node at (-44:1.15) {\small $\delta$};
\node at (-18:1.45) {\small $\epsilon$};
\node at (10:1.15) {\small $\gamma$};	

\node at (-68:0.8) {\small $\gamma$}; 
\node at (-112:0.8) {\small $\alpha$};
\node at (-116:1.15) {\small $\beta$};
\node at (-90:1.45) {\small $\delta$};
\node at (-62:1.15) {\small $\epsilon$};	

\node at (176:0.8) {\small $\delta$}; 
\node at (220:0.8) {\small $\beta$};
\node at (224:1.15) {\small $\alpha$};
\node at (198:1.45) {\small $\gamma$};
\node at (170:1.15) {\small $\epsilon$};

\node[draw,shape=circle, inner sep=0.5] at (0,0) {\small $1$};
\node[draw,shape=circle, inner sep=0.5] at (45:1.05) {\small $2$};
\node[draw,shape=circle, inner sep=0.5] at (-18:1.05) {\small $3$};
\node[draw,shape=circle, inner sep=0.5] at (-90:1.05) {\small $4$};
\node[draw,shape=circle, inner sep=0.5] at (198:1.05) {\small $5$};
\node[draw,shape=circle, inner sep=0.5] at (135:1.05) {\small $6$};

\end{tikzpicture}
\caption{Cases 2 and 3 for $a^3bc$.}
\label{3abcF3}
\end{figure}

\subsection{Case 3$(a^3bc)$}
\label{3abcCase3}

Let the third of Figure \ref{3abc_pentagon} be the center tile $T_1$ in the partial neighbourhood in the second of Figure \ref{3abcF3}. Since $E_{34}$ is adjacent to $E_{13}=b$ and $E_{14}=c$, we get $E_{34}=a$ and determine $T_3,T_4$. This further determines $T_2,T_5$. The angle sum of $\alpha\beta\gamma$ and the angle sum for pentagon imply
\[
\alpha+\beta+\gamma=2\pi,\;
\delta+\epsilon=(1+\tfrac{4}{f})\pi.
\]

Suppose $H=\delta\epsilon\cdots$ has $\beta\thick\beta$. Then the angle sum of $H$ implies 
\[
\beta
\le \pi-\tfrac{1}{2}(\delta+\epsilon)
=(\tfrac{1}{2}-\tfrac{2}{f})\pi,\;
\alpha+\gamma\ge 2\pi-\beta
\ge (\tfrac{3}{2}+\tfrac{2}{f})\pi.
\]
Moreover, the AAD $\thin^{\delta}\beta^{\alpha}\thick^{\alpha}\beta^{\delta}\thin$ of $\thin\beta\thick\beta\thin$ implies a vertex $\dash\alpha\thick\alpha\dash\cdots=\theta\dash\alpha\thick\alpha\dash\rho\cdots$, where $\theta,\rho=\alpha,\gamma$ are two angles. By $\alpha+\gamma>\pi$, one of $\theta,\rho$ is $\alpha$. Then $\dash\alpha\thick\alpha\dash\cdots=\thick\alpha\dash\alpha\thick\alpha\dash\alpha\cdots$ or $\thick\alpha\dash\alpha\thick\alpha\dash\gamma\cdots$. The angle sum of the vertex implies $4\alpha\le 2\pi$ or $3\alpha+\gamma\le 2\pi$. Combined with $\alpha+\gamma
\ge (\tfrac{3}{2}+\tfrac{2}{f})\pi$, we always get $\gamma>\pi$. On the other hand, the AAD $\thick^{\beta}\alpha^{\gamma}\dash^{\gamma}\alpha^{\beta}\thick$ implies $\gamma^2\cdots$ is a vertex. We get a contradiction. 

Therefore $H=\delta\thin\epsilon\cdots$ has no $\beta\thick\beta$. By the same reason, $H$ has no $\gamma\dash\gamma$. By the edge length consideration, this implies that, if $H=\delta\thin\epsilon\cdots$ has any one of $\alpha,\beta,\gamma$, then the remainder of $H$ is $\thin\beta\thick\alpha\dash\gamma\thin$. This contradicts the vertex $\alpha\beta\gamma$, and also contradicts the angle sum for pentagon. 

Therefore $H$ has only $\delta,\epsilon$. By $\delta+\epsilon>\pi$, we conclude $H=\delta\epsilon\cdots=\delta^3\epsilon,\delta\epsilon^3, \delta^4\epsilon,\delta\epsilon^4$. We will continue studying the case in Section \ref{pent_ddivision}.

\subsection{Double Pentagonal Subdivision}
\label{pent_ddivision}

After Sections \ref{3abcCase1}, \ref{3abcCase2}, \ref{3abcCase3}, the only remaining cases for the edge combination $a^3bc$ are the following.
\begin{description}
\item Case 1.3, $H=\alpha^4$, $E_{34}=b$, $E_{45}=a$: We have the second of Figure \ref{3abcF13b}. The angle sums of $\beta^2\epsilon,\gamma^2\delta,\delta^3,\alpha^4$ and the angle sum for pentagon imply
\[
\alpha=\tfrac{1}{2}\pi,\;
\beta=(\tfrac{5}{6}-\tfrac{4}{f})\pi,\;
\gamma=\delta=\tfrac{2}{3}\pi,\;
\epsilon=(\tfrac{1}{3}+\tfrac{8}{f})\pi.
\]
\item Case 3, $H=\delta^3\epsilon,\delta\epsilon^3, \delta^4\epsilon,\delta\epsilon^4$: We have the second of Figure \ref{3abcF3}. The angle sums of $\alpha\beta\gamma$ and the angle sum for pentagon imply
\[
\alpha+\beta+\gamma=2\pi,\;
\delta+\epsilon=(1+\tfrac{4}{f})\pi.
\]
\end{description}
We also recall that $f\ge 24$. 

\subsubsection*{Case 1.3. $H=\alpha^4$}

Since $f=24$ implies $\beta=\gamma=\delta=\epsilon$, contradicting Lemma \ref{geometry}, we have $f>24$. 

In Section \ref{avc}, we obtained all the possible vertex combinations. We need to consider $f=48,72,120$, and the rest.

For $f=48$, the AVC is given by \eqref{avc13}
\[
\text{AVC}=\{\alpha\beta^2,\beta^2\epsilon,\gamma^2\delta,\delta^3,\alpha^4,\epsilon^4\}.
\]
We construct the tiling based on the AVC and the fact that there is a vertex $\delta^3$, such that the tiles around the vertex are arranged as $T_1,T_4,T_5$ in the second of Figure \ref{3abcF13b}. Figure \ref{3abcF13b_tiling} shows such a vertex $\delta^3$, with three tiles $T_1,T_1',T_1''$ around the vertex as assumed. We will use $T_n',T_n''$ to denote the two rotations of $T_n$, and the subsequent conclusions remain valid after rotations. 

By the AVC, we have $\beta_1\epsilon_{1'}\cdots=\beta^2\epsilon$. This determines $T_2$ (and its other two rotations). Then $\alpha_1\alpha_2\cdots=\alpha^4$ (we will omit mentioning ``by the AVC'') determines $T_3,T_4$ (we will omit mentioning ``and other two rotations''). Then $\gamma_2\gamma_3\cdots=\gamma^2\delta$ and $\epsilon_2\epsilon_{4'}\cdots=\epsilon^4$ determine $T_5$. Then $\beta_5\epsilon_3\cdots=\beta^2\epsilon$ determines $T_6$. Then $\delta_3\delta_6\cdots=\delta_3\delta_6\delta_7$. By $\delta_7$, we know $\beta_3\beta_4\cdots=\beta_3\beta_4\epsilon_7$. Then $\delta_7,\epsilon_7$ determine $T_7$. We find that $T_3,T_6,T_7$ around a vertex $\delta^3$ just like $T_1,T_1',T_1''$ around the original vertex $\delta^3$. The argument for the tiling can therefore be repeated by starting from the new $\delta^3$.

\begin{figure}[htp]
\centering
\begin{tikzpicture}[rotate=35, >=latex]


\foreach \a in {0,1,2}
{
\begin{scope}[rotate=120*\a]

\draw
	(0,0) -- (-30:0.7) -- (5:1.1)  
	(70:1.7) -- (110:1.7) -- (125:1.1) 
	(70:1.7) -- (65:2.3) -- (45:2.4) -- (35:1.8) -- (10:2.2) -- (-10:1.7)
	(110:1.7) -- (100:2.3)
	(70:3.2) -- (45:3.1) -- (45:2.4);

\draw[line width=1.5]
	(90:0.7) -- (55:1.1) -- (35:1.8)
	(80:2.6) -- (65:2.3)
	(45:3.1) -- (30:3);

\draw[dashed]
	(5:1.1) -- (55:1.1) -- (70:1.7)
	(100:2.3) -- (80:2.6) -- (70:3.2)
	(10:2.2) -- (30:3);

\node at (30:0.2) {\small $\delta$}; 
\node at (70:0.6) {\small $\beta$};
\node at (-15:0.6) {\small $\epsilon$};
\node at (52:0.9) {\small $\alpha$};
\node at (8:0.9) {\small $\gamma$};

\node at (90:0.9) {\small $\beta$}; 
\node at (67:1.15) {\small $\alpha$};
\node at (113:1.15) {\small $\delta$};
\node at (75:1.5) {\small $\gamma$};
\node at (107:1.5) {\small $\epsilon$};

\node at (55:1.4) {\small $\alpha$}; 
\node at (64:1.75) {\small $\gamma$};
\node at (43:1.8) {\small $\beta$};
\node at (48:2.2) {\small $\delta$};
\node at (62:2.15) {\small $\epsilon$};

\node at (42:1.2) {\small $\alpha$}; 
\node at (10:1.25) {\small $\gamma$};
\node at (30:1.6) {\small $\beta$};
\node at (11:1.95) {\small $\delta$};
\node at (-4:1.65) {\small $\epsilon$};

\node at (74:1.85) {\small $\delta$}; 
\node at (103:1.8) {\small $\epsilon$};
\node at (80:2.4) {\small $\alpha$};
\node at (98:2.1) {\small $\gamma$};
\node at (71:2.15) {\small $\beta$};

\node at (64:2.5) {\small $\beta$}; 
\node at (74:2.65) {\small $\alpha$};
\node at (49:2.55) {\small $\delta$};
\node at (69:3) {\small $\gamma$};
\node at (48:2.9) {\small $\epsilon$};

\node at (41:2.4) {\small $\delta$}; 
\node at (33:1.95) {\small $\epsilon$};
\node at (42:2.85) {\small $\beta$};
\node at (17:2.2) {\small $\gamma$};
\node at (32:2.8) {\small $\alpha$};

\end{scope}
}

\node[draw,shape=circle, inner sep=0.5] at (30:0.6) {\small $1$};
\node[draw,shape=circle, inner sep=0] at (150:0.6) {\small $1'$};
\node[draw,shape=circle, inner sep=-0.2] at (-94:0.65) {\small $1''$};
\node[draw,shape=circle, inner sep=0.5] at (90:1.3) {\small $2$};
\node[draw,shape=circle, inner sep=0.5] at (53:1.8) {\small $3$};
\node[draw,shape=circle, inner sep=0.5] at (17:1.55) {\small $4$};
\node[draw,shape=circle, inner sep=0] at (135:1.55) {\small $4'$};
\node[draw,shape=circle, inner sep=0.5] at (85:2) {\small $5$};
\node[draw,shape=circle, inner sep=0.5] at (58:2.8) {\small $6$};
\node[draw,shape=circle, inner sep=0.5] at (31:2.3) {\small $7$};

\end{tikzpicture}
\caption{Construct double pentagonal subdivision.}
\label{3abcF13b_tiling}
\end{figure}

The tiles $T_1,T_2$ and their rotations form a local tiling that is exactly the same as the tiling of a triangular face of the regular octahedron in Figure \ref{double_tiling}. The repeated construction around the new $\delta^3$ creates another such tiling of another triangular face next to the existing triangular face tiling. Therefore further repetitions give the double pentagonal subdivision tiling. The tiling we get at the end is first of Figure \ref{dsubdivision_tiling}. 

The argument for $f=48$ also applies to $f=120$. For $f=120$, the AVC is given by \eqref{avc120} 
\[
\text{AVC}=\{\beta^2\epsilon,\gamma^2\delta,\delta^3,\alpha^4,\epsilon^5\}.
\]
Compared with the AVC for $f=48$, we see $\alpha\beta^2$ is missing, and $\epsilon^4$ is replaced by $\epsilon^5$. The argument still starts with the same assumption on the neighbourhood of one $\delta^3$ vertex. We need to pay attention to where $\epsilon^4$ is used for $f=48$. The place we use $\epsilon^4$ is $\epsilon_2\epsilon_{4'}\cdots=\epsilon^2\cdots=\epsilon^4$. For the new AVC, this becomes $\epsilon_2\epsilon_{4'}\cdots=\epsilon^5$. Therefore the argument is valid, and we still get the double pentagonal subdivision tiling. The tiling we get at the end is the second of Figure \ref{dsubdivision_tiling}.

For $f=72$, we have
\[
\text{AVC}=\{\beta^2\epsilon,\gamma^2\delta,\delta^3,\alpha^4,\delta\epsilon^3\}.
\]
The argument for $f=48$ is valid until $\epsilon_2\epsilon_{4'}\cdots=\delta\epsilon^3$. 
The AVC implies $\beta\gamma\cdots,\gamma\epsilon\cdots$ are not vertices. Therefore the AAD of $\delta\epsilon^3$ is $\thin^{\gamma}\epsilon^{\delta}\thin^{\beta}\delta^{\epsilon}\thin^{\delta}\epsilon^{\gamma}\thin\epsilon\thin=\thin^{\delta}\epsilon^{\gamma}\thin\epsilon\thin^{\gamma}\epsilon^{\delta}\thin^{\beta}\delta^{\epsilon}\thin$. The AAD of $\thin^{\delta}\epsilon^{\gamma}\thin\epsilon\thin^{\gamma}\epsilon^{\delta}\thin$ implies $\gamma\thin\gamma\cdots$ is a vertex. On the other hand, by the edge length consideration, the AVC implies $\gamma^2\cdots=\gamma^2\delta=\thin\gamma\dash\gamma\thin\delta\thin$. Therefore $\gamma\thin\gamma\cdots$ is not a vertex. The contradiction proves that there is no tiling for $f=72$.

For general $f$ (i.e., $f\ne 48,72,120$), we have 
\[
\text{AVC}=\{\beta^2\epsilon,\gamma^2\delta,\delta^3,\alpha^4\}.
\]
Again the argument for $f=48$ is valid until $\epsilon_2\epsilon_{4'}\cdots$. Since $\epsilon^2\cdots$ is not in the AVC, we get a contradiction.

This completes the discussion for Case 1.3, $H=\alpha^4$. The conclusion is the double pentagonal subdivision tilings for $f=48$ and $120$. 

\subsubsection*{Case 3. $H=\delta^3\epsilon,\delta\epsilon^3, \delta^4\epsilon,\delta\epsilon^4$}

If $H=\delta^3\epsilon$ in the second of Figure \ref{3abcF3}, then $A_{6,12}=\delta$. By the edge length consideration, this implies $\delta_5\epsilon_1\cdots=\delta_5\epsilon_1\epsilon_6=\delta\epsilon^2$. Then the angle sums of $\delta^3\epsilon,\delta\epsilon^2$ and $\delta+\epsilon=(1+\tfrac{4}{f})\pi$ imply $f=20$, a contradiction. We get the same contradiction for $H=\delta\epsilon^3$.

The final case we need to consider is Case 3, $H=\delta^4\epsilon,\delta\epsilon^4$. After finishing all the other cases for $a^3bc$, we may assume that there is no $3^5$-tile and no $3^44$-tile. By Lemma \ref{base_tile3}, this implies $f\ge 60$. On the other hand, for $H=\delta^4\epsilon$, the argument for $H=\delta^3\epsilon$ can be used to show that $\delta\epsilon^2$ is a vertex. Then the angle sums of $\delta^4\epsilon,\delta\epsilon^2$ and $\delta+\epsilon=(1+\tfrac{4}{f})\pi$ imply $f=28$, contradicting $f\ge 60$. We get the same contradiction for $H=\delta\epsilon^4$.

\end{document}